\newtheorem{theorem}{Theorem}[section]
\newtheorem{lemma}[theorem]{Lemma}
\newtheorem{proposition}[theorem]{Proposition}
\theoremstyle{definition}
\newtheorem{definition}[theorem]{Definition}
\theoremstyle{remark}
\newtheorem{remark}[theorem]{Remark}
\theoremstyle{conjecture}
\newtheorem{conjecture}[theorem]{Conjecture}
\numberwithin{equation}{section}
\newcommand{\cp}{\mathcal P}
\newcommand{\ch}{{\rm ch} }
\newcommand{\C}{ \mathbb C }
\newcommand{\Coe}{ {\rm Coeff} }
\newcommand{\End}{{\rm End}}
\newcommand{\fa}{ \mathfrak a }
\newcommand{\fd}{ \mathfrak d }
\newcommand{\fG}{ \mathfrak G }
\newcommand{\fL}{ \mathfrak L }
\newcommand{\fock}{{\mathbb H}_X}
\newcommand{\Hn}{H^*(\Xn)}
\newcommand{\la}{{\lambda}}
\newcommand{\lambsq}{s(\lambda)}
\newcommand{\Ln}{L^{[n]}}
\newcommand{\Tr}{ {\rm Tr} }
\newcommand{\vac}{|0\rangle}
\newcommand{\w}{\tilde}
\newcommand{\Wcp}{\widetilde{\mathcal P}}
\newcommand{\Xn}{ {X^{[n]}}}
\newcommand{\Z}{ \mathbb Z }
\def\beq{\begin{equation}}
\def\eeq{\end{equation}}
\numberwithin{equation}{section}
\begin{document}

\title[Okounkov's conjecture]
      {On Okounkov's conjecture connecting Hilbert schemes of points 
          and multiple $q$-zeta values}

\author[Zhenbo Qin]{Zhenbo Qin$^1$}
\address{Department of Mathematics, University of Missouri, Columbia, MO
65211, USA} \email{zq@math.missouri.edu}
\thanks{${}^1$Partially supported by a grant from the Simons Foundation}

\author[Fei Yu]{Fei Yu$^2$}
\address{Department of Mathematics, Xiamen University, Xiamen, China} 
\email{yu@xmu.edu.cn}
\thanks{${}^2$Partially supported by the Fundamental Research Funds for 
the Central Universities (No. 20720140526)}

\keywords{Hilbert schemes of points on surfaces, multiple $q$-zeta values.} 
\subjclass[2000]{Primary 14C05; Secondary 11B65, 17B69.}

\begin{abstract}
We compute the generating series  
for the intersection pairings between the total Chern classes of the tangent bundles 
of the Hilbert schemes of points on a smooth projective surface
and the Chern characters of tautological bundles over these Hilbert schemes.
Modulo the lower weight term, we verify Okounkov's conjecture \cite{Oko} 
connecting these Hilbert schemes and multiple $q$-zeta values. In addition,
this conjecture is completely proved when the surface is abelian.
We also determine some universal constants
in the sense of Boissi\' ere and Nieper-Wisskirchen \cite{Boi, BN} regarding
the total Chern classes of the tangent bundles of these Hilbert schemes.
The main approach of this paper is to use the set-up of Carlsson and Okounkov 
outlined in \cite{Car, CO} and the structure of the Chern character operators 
proved in \cite{LQW2}.
\end{abstract}

\maketitle

\section{\bf Introduction} 
\label{sect_intr}

In the region ${\rm Re} \, s > 1$, the Riemann zeta function is defined by 
$$
\zeta(s) = \sum_{n =1}^\infty {1 \over n^{s}}.
$$
The integers $s > 1$ give rise to a sequence of special values of the Riemann zeta function.
Multiple zeta values are series of the form
$$
\zeta(s_1, \ldots, s_k) = \sum_{n_1 > \cdots > n_k} {1 \over n_1^{s_1} \cdots n_k^{s_k}}
$$
where $n_1, \ldots, n_k$ denote positive integers, and $s_1, \ldots, s_k$ are
positive integers with $s_1 > 1$.
Multiple $q$-zeta values are $q$-deformations of $\zeta(s_1, \ldots, s_k)$, 
which may take different forms (see \cite{Bra1, Bra2, OT, Zud} for details). 
In \cite{Oko}, Okounkov proposed several interesting conjectures regarding 
multiple $q$-zeta values and Hilbert schemes of points. Motivated by these conjectures,
we compute in this paper the generating series  
for the intersection pairings between the total Chern classes of the tangent bundles 
of the Hilbert schemes of points on a smooth projective surface
and the Chern characters of tautological bundles over these Hilbert schemes.

Let $X$ be a smooth projective complex surface, 
and let $\Xn$ be the Hilbert scheme of $n$ points in $X$. 
A line bundle $L$ on $X$ induces a tautological rank-$n$ bundle $\Ln$ on $\Xn$.
Let $\ch_k(\Ln)$ be the $k$-th Chern character of $\Ln$.
Following Okounkov \cite{Oko}, we introduce the two generating series:
\begin{eqnarray}     
\big \langle \ch_{k_1}^{L_1} \cdots \ch_{k_N}^{L_N} \big \rangle
  &=&\sum_{n \ge 0} q^n \, \int_\Xn \ch_{k_1}(L_1^{[n]}) \cdots \ch_{k_N}(L_N^{[n]}) 
        \cdot c(T_\Xn)   \label{OkoChkN.1}   \\
\big \langle \ch_{k_1}^{L_1} \cdots \ch_{k_N}^{L_N} \big \rangle'
  &=&\big \langle \ch_{k_1}^{L_1} \cdots \ch_{k_N}^{L_N} \big \rangle/\langle 1 \rangle
      = (q; q)_\infty^{\chi(X)} \cdot \big \langle 
      \ch_{k_1}^{L_1} \cdots \ch_{k_N}^{L_N} \big \rangle  \label{OkoChkN.2} 
\end{eqnarray}
where $0 < q < 1$, $c\big (T_\Xn \big )$ is the total Chern class of 
the tangent bundle $T_\Xn$, $\chi(X)$ is the Euler characteristics, 
and $(a; q)_n = \prod_{i=0}^n (1-aq^i)$. 
In \cite{Car}, for $X = \C^2$ with a suitable $\C^*$-action and $L = \mathcal O_X$,
the series $\big \langle \ch_{k_1}^L \cdots \ch_{k_\ell}^L \big \rangle$ 
in the equivariant setting has been studied.
In \cite{Oko}, Okounkov proposed the following conjecture.

\begin{conjecture}   \label{OkoConj}
$\big \langle \ch_{k_1}^L \cdots \ch_{k_N}^L \big \rangle'$ is a multiple $q$-zeta value
of weight $\sum_{i=1}^N (k_i + 2)$.
\end{conjecture}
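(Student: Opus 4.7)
The plan is to leverage the Nakajima--Grojnowski Fock space formalism for the cohomology of the Hilbert schemes. Set up the total Fock space $\fock = \bigoplus_n \Hn$ acted on by the Heisenberg operators, and translate the intersection pairing into operator language. By the main result of \cite{LQW2}, each $\ch_k(\Ln)$ is realized by a Chern character operator $\fG_k^L$ on $\fock$, expressible as a normal-ordered polynomial in the Heisenberg generators with coefficients built from $L$, the diagonal class, and the canonical class $K_X$, and carrying a natural weight $k + 2$. In parallel, the Carlsson--Okounkov construction \cite{Car, CO} encodes $c(T_\Xn)$ via an explicit vertex operator $\W$ on a doubled Fock space, allowing $\big \langle \ch_{k_1}^{L_1} \cdots \ch_{k_N}^{L_N} \big \rangle$ to be rewritten as a vacuum matrix element weighted by $q^n$.

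Next, insert the operators $\fG_{k_i}^{L_i}$ between the vertex operators of \cite{CO} and carry out the reduction using the Heisenberg commutation relations. After systematic contraction, the matrix element is converted into a sum, over partitions $\la \vdash n$, of rational expressions in the parts of $\la$ that arise from integrating the diagonal-type classes appearing in $\fG_k^L$. The normalization $(q;q)_\infty^{\chi(X)}$ in \eqref{OkoChkN.2} is designed to cancel the bosonic partition function produced by the trivial Heisenberg contractions, so that $\big \langle \cdots \big \rangle'$ reduces to a manifestly nontrivial $q$-series indexed by partitions.

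The decisive step is to recognize this partition sum as a multiple $q$-zeta value. After a Mellin-style rearrangement, sums of the form $\sum_\la q^{|\la|} \prod_i p_i(\la)^{-a_i}$, where $p_i$ ranges over distinct parts and $a_i \in \Z_{>0}$, should reorganize into nested series $\sum_{n_1 > \cdots > n_r} q^{n_1 a_1 + \cdots}/\big[(1-q^{n_1})^{b_1} \cdots (1-q^{n_r})^{b_r}\big]$ matching the standard presentations of multiple $q$-zeta values in \cite{Bra1, Bra2, OT, Zud}. The weight homogeneity $\sum_i (k_i + 2)$ then follows by tracking how the weight of each $\fG_{k_i}^{L_i}$ propagates through the contraction process.

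The main obstacle is the presence of lower-weight corrections: the operators $\fG_k^L$ of \cite{LQW2} are top-weight homogeneous only up to subleading pieces involving $K_X$ and derivative terms, and these corrections produce $q$-series that are a priori of mixed weight. Controlling them uniformly appears delicate, so one should expect the conjecture to be established in full only modulo the lower-weight terms for a general surface. For an abelian surface, however, $K_X = 0$ and $\chi(X) = 0$, so all correction terms drop out and the full Conjecture~\ref{OkoConj} is recovered in that case, providing a genuine test of the approach.
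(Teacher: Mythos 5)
Your overall strategy coincides with the paper's: realize $\ch_{k_i}(L_i^{[n]})$ via the Chern character operators of \cite{LQW2}, encode $c(T_\Xn)$ through the Carlsson--Okounkov vertex operator, and evaluate the resulting expression $\Tr \, q^\fd \, W(\fL_1, z) \prod_{i=1}^N \fG_{k_i}(\alpha_i)$ by Heisenberg contractions, reorganizing the surviving partition sums into nested series of multiple $q$-zeta type. (A small correction: the relevant quantity is a trace over $\fock$, not a vacuum matrix element, because $c(T_\Xn)$ arises by restricting $c_{2n}(\mathbb E_{\fL_1})$ to the diagonal of $\Xn \times \Xn$; this is exactly Lemma~\ref{FtoW}.)

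The genuine gap is in your final paragraph. The lower-weight corrections do not come only from the $e_X$-, $K_X$- and $K_X^2$-terms of Theorem~\ref{char_th}, and they do \emph{not} drop out for an abelian surface. Two other sources survive when $K_X = e_X = 0$: first, the Grothendieck--Riemann--Roch decomposition $\ch_k(\Ln) = G_k(1_X, n) + G_{k-1}(L, n) + G_{k-2}(L^2/2, n)$, whose second and third summands contribute in weights $k+1$ and $k$; second, the incomplete contractions against $\Gamma_\pm$ in the trace, which leave residual factors $\Tr \, q^\fd \prod_i \fa_{\mu^{(i)}}(\cdots)$ with $\mu^{(i)} \neq \emptyset$ (Case B in the proof of Theorem~\ref{ThmJJkAlpha}) and which must then be shown, via Lemma~\ref{EX=0Trqdjpx} and Theorem~\ref{EX=0Fk1NAlpha1N}, to assemble into \emph{generalized} multiple $q$-zeta values of strictly smaller weight, involving extra factors $(-n_i)^{w_i}$. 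In fact, for an abelian surface the top-weight term is a multiple of $\langle K_X, K_X \rangle^N = 0$, so your argument as written would force $\big\langle \ch_{k_1}^L \cdots \ch_{k_N}^L \big\rangle' = 0$, contradicting the explicit nonzero answer proportional to $\langle L, L \rangle$ for $N=1$ and even $k$ (Proposition~\ref{chkL}). The work of controlling and identifying these surviving lower-weight contributions is precisely what makes the abelian case nontrivial, and it is absent from your proposal; without it you obtain only the statement modulo the lower-weight term, i.e.\ the analogue of Theorem~\ref{Intro-AbelianSur}, not the full conjecture for abelian surfaces.
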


In this paper, we study Conjecture~\ref{OkoConj}. To state our result,
we introduce some definitions. For integers $n_i >0, w_i > 0$ and $p_i \ge 0$ 
with $1 \le i \le v$, define the {\it weight} of 
$\prod_{i=1}^v {q^{n_i w_ip_i} \over (1-q^{n_i})^{w_i}}$ to be
$\sum_{i=1}^v w_i$. For $k \ge 0$ and $\alpha \in H^*(X)$, 
define $\Theta^\alpha_k(q, z)$ to be the weight-$(k+2)$ multiple $q$-zeta value 
(with an additional variable $z$ inserted):
$$    
-\sum_{a, s_1, \ldots, s_a, b, t_1, \ldots, t_b \ge 1 
\atop \sum_{i=1}^a s_i + \sum_{j = 1}^b t_j = k+2} 
\big \langle (1_X - K_X)^{\sum_{i = 1}^a s_i}, \alpha \big \rangle
\prod_{i=1}^a {(-1)^{s_i} \over s_i!} \cdot \prod_{j=1}^b {1 \over t_j!} 
$$
\begin{eqnarray*}    
\cdot \sum_{n_1 > \cdots > n_a} \prod_{i= 1}^a {(q z)^{n_i s_i} \over (1-q^{n_i})^{s_i}}
\cdot \sum_{m_1 > \cdots > m_b} \prod_{j= 1}^b {z^{-m_jt_j} \over (1-q^{m_j})^{t_j}}
\end{eqnarray*} 
where $K_X$ and $1_X$ are the canonical class and fundamental class of $X$ respectively.
Let $\Coe_{z_1^0 \cdots z_N^0} (\cdot)$ denote 
the coefficient of $z_1^0 \cdots z_N^0$, $L$ also denote 
the first Chern class of the line bundle $L$, and $e_X$ be the Euler class of $X$.

\begin{theorem}   \label{Intro-AbelianSur}
Let $L_1, \ldots, L_N$ be line bundles over $X$, and $k_1, \ldots, k_N \ge 0$. Then, 
\begin{eqnarray}    \label{Intro-AbelianSur.0}
\big \langle \ch_{k_1}^{L_1} \cdots \ch_{k_N}^{L_N} \big \rangle'
= \Coe_{z_1^0 \cdots z_N^0} \left (\prod_{i=1}^N \Theta^{1_X}_{k_i}(q, z_i) \right ) + W,
\end{eqnarray}
and the lower weight term $W$ is an infinite linear combination of the expressions:
\begin{eqnarray*}      
\prod_{i=1}^u \left \langle K_X^{r_i}e_X^{r_i'}, L_1^{\ell_{i, 1}}  \cdots 
   L_N^{\ell_{i, N}} \right \rangle 
\cdot \prod_{i=1}^v {q^{n_i w_ip_i} \over (1-q^{n_i})^{w_i}}
\end{eqnarray*} 
where $\sum_{i=1}^v w_i < \sum_{i=1}^N (k_i + 2)$, 
and the integers $u, v$, $r_i, r_i', \ell_{i, j} \ge 0, 
n_i > 0, w_i >0, p_i \in \{0, 1\}$ depend only on $k_1, \ldots, k_N$.
Furthermore, all the coefficients of this linear combination are 
independent of $q, L_1, \ldots, L_N$ and $X$.
\end{theorem}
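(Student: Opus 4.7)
The plan is to realize the generating series as a correlation function on the Fock space $\fock = \bigoplus_{n \ge 0} H^*(\Xn)$, combining the Carlsson--Okounkov vertex operator description of the total Chern class $c(T_{\Xn})$ from \cite{Car, CO} with the \cite{LQW2} structure formula for the Chern character operators $\ch_k(\Ln)$. Standard Heisenberg-algebra computations then extract the stated main term, and careful weight bookkeeping identifies the remainder as $W$.

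First, I would convert $\big\langle \ch_{k_1}^{L_1} \cdots \ch_{k_N}^{L_N}\big\rangle$ into a vacuum expectation on $\fock$ involving the $q$-grading operator, the Chern character operators, and the Carlsson--Okounkov vertex operator that produces $c(T_{\Xn})$ upon evaluation at each weight. G\"ottsche's formula gives $\langle 1 \rangle = (q;q)_\infty^{-\chi(X)}$, so normalization by $\langle 1 \rangle$ (equivalently, multiplication by $(q;q)_\infty^{\chi(X)}$) eliminates the pure vacuum contribution of the vertex operator.

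Second, I would substitute the \cite{LQW2} decomposition of each $\ch_k(\Ln)$ into a sum of normal-ordered Heisenberg products with universal polynomial coefficients in $L$, $K_X$ and $e_X$. The leading piece involves only $L$ (paired against the fundamental class $1_X$); the subleading pieces carry explicit factors of $K_X$ and $e_X$ and employ strictly fewer Heisenberg modes. Conjugation by the Carlsson--Okounkov vertex operator shifts the Heisenberg modes by explicit $q$-series that depend on an auxiliary variable $z_i$ attached to each insertion. Computing the vacuum expectation of the shifted leading parts via Wick contractions, and matching against the double-sum definition of $\Theta^{1_X}_{k_i}(q, z_i)$, produces the main term $\Coe_{z_1^0 \cdots z_N^0}\left(\prod_{i=1}^N \Theta^{1_X}_{k_i}(q, z_i)\right)$.

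Finally, the correction terms from the \cite{LQW2} formula pass through the same machinery and yield intersection numbers of the form $\langle K_X^{r_i} e_X^{r_i'}, L_1^{\ell_{i,1}} \cdots L_N^{\ell_{i,N}}\rangle$ on $X$, multiplied by products $\prod q^{n_i w_i p_i}/(1-q^{n_i})^{w_i}$. Since each correction uses strictly fewer Heisenberg modes, and each such mode contributes a factor of $1/(1-q^n)$ to the vacuum expectation, the total weight satisfies $\sum_i w_i < \sum_i(k_i + 2)$. The universality of the coefficients (independence of $q$, $X$, and the $L_i$) follows from the universality of both the Carlsson--Okounkov vertex operator and the \cite{LQW2} structure formula, whose coefficients are absolute constants. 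The main obstacle will be the combinatorial matching in the second step, namely identifying the explicit double sums produced by the Wick contractions with the precise combinatorial structure of $\Theta^{1_X}_{k_i}(q, z_i)$, while correctly tracking the contributions from the $(qz_i)$ and $z_i^{-1}$ branches coming from the different insertions and the tangent-bundle vertex operator.
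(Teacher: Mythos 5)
Your proposal follows essentially the same route as the paper: reduce the series to the trace $\Tr \, q^{\fd}\, W(\fL_1,z)\,\prod_i \fG_{k_i}(\alpha_i)$ via the Carlsson--Okounkov vertex operator, substitute the Grothendieck--Riemann--Roch and \cite{LQW2} decompositions of $\ch_k(\Ln)$, and evaluate by Heisenberg commutations, with the fully contracted leading monomials matching $\Coe_{z_1^0\cdots z_N^0}\prod_i\Theta^{1_X}_{k_i}(q,z_i)$ and everything else absorbed into $W$. The one caution is that $W$ also receives contributions from the leading Heisenberg monomials $\fa_{\lambda}(1_X)$ themselves (when some modes contract among each other in the residual trace rather than against $\Gamma_\pm$, dropping the weight), not only from the $K_X$, $e_X$ corrections and the $G_{k-1}(L_i)$, $G_{k-2}(L_i^2/2)$ pieces; this emerges automatically from the computation you describe, so it is an omission of bookkeeping rather than a flaw in the approach.
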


Theorem~\ref{Intro-AbelianSur} proves Conjecture~\ref{OkoConj}, 
modulo the lower weight term $W$. Note that the leading term 
$\Coe_{z_1^0 \cdots z_N^0} \left (\prod_{i=1}^N \Theta^{1_X}_{k_i}(q, z_i) \right )$
in $\big \langle \ch_{k_1}^{L_1} \cdots \ch_{k_N}^{L_N} \big \rangle'$ has weight 
$\sum_{i=1}^N (k_i + 2)$, and is a multiple of 
$\langle K_X, K_X \rangle^N$ whose coefficient depends only on $k_1, \ldots, k_N$
and is independent of the line bundles $L_1, \ldots, L_N$ and the surface $X$. 

In general, it is unclear how to organize the lower weight term $W$ in  
Theorem~\ref{Intro-AbelianSur} into multiple $q$-zeta values. On the other hand,
we have the following result which together with Theorem~\ref{Intro-AbelianSur} 
verifies Conjecture~\ref{OkoConj} when $X$ is an abelian surface.

\begin{theorem}   \label{Intro-KXEX=0}
Let $L_1, \ldots, L_N$ be line bundles over an abelian surface $X$, 
and $k_1, \ldots, k_N \ge 0$. Then, the lower weight term $W$ in \eqref{Intro-AbelianSur.0}
is a linear combination of the coefficients of $z_1^0 \cdots z_N^0$ in 
some multiple $q$-zeta values (with additional variables $z_1, \ldots, z_N$ inserted) 
of weights $< \sum_{i=1}^N (k_i +2)$. 
Moreover, the coefficients in this linear combination are independent of $q$.
\end{theorem}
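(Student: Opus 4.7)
The plan is to specialize Theorem~\ref{Intro-AbelianSur} to an abelian surface $X$ and to repackage the surviving pieces of $W$ as multiple $q$-zeta values.

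First I would exploit the two defining cohomological features of an abelian surface: $K_X = 0$ and $e_X = c_2(X) = 0$. In the statement of Theorem~\ref{Intro-AbelianSur}, every summand of $W$ carries an intersection factor $\prod_{i=1}^u \langle K_X^{r_i} e_X^{r_i'}, L_1^{\ell_{i,1}} \cdots L_N^{\ell_{i,N}} \rangle$, which vanishes unless $r_i = r_i' = 0$ for every $i$. Thus $W$ collapses to an infinite combination of terms of the shape
\[
c_\Lambda \cdot \prod_{i=1}^u \int_X L_1^{\ell_{i,1}} \cdots L_N^{\ell_{i,N}} \cdot \prod_{i=1}^v \frac{q^{n_i w_i p_i}}{(1-q^{n_i})^{w_i}},
\]
with $\sum_j \ell_{i,j} = 2$ (so that each surface pairing is nonzero), constants $c_\Lambda$ independent of $q$ and of $L_1, \ldots, L_N$, and the weight bound $\sum_i w_i < \sum_j (k_j+2)$ inherited from Theorem~\ref{Intro-AbelianSur}.

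Next I would exhibit these surviving series as $\Coe_{z_1^0 \cdots z_N^0}$ of multiple $q$-zeta values with auxiliary variables $z_1, \ldots, z_N$, in complete analogy with the leading term of Theorem~\ref{Intro-AbelianSur}. The key observation is that since $p_i \in \{0,1\}$, each building block $q^{n_i w_i p_i}/(1-q^{n_i})^{w_i}$ is precisely one of the two shapes $(qz)^{n_i s_i}/(1-q^{n_i})^{s_i}$ or $z^{-m_j t_j}/(1-q^{m_j})^{t_j}$ already appearing inside $\Theta^\alpha_k(q,z)$. The variables $z_1, \ldots, z_N$ act as bookkeeping tokens whose $z_i^0$-extractions impose the balance conditions that reconstitute the intersection products of the $L_j$'s recorded in the first step.

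Finally I would reorganize the $n_i$-summations into strictly decreasing order. In the displayed expression the $n_i$'s a priori run independently over positive integers, whereas a multiple $q$-zeta value demands $n_1 > n_2 > \cdots > n_a$. The passage between these two conventions is a $q$-analog of the stuffle (harmonic) product, which expresses any unordered product of depth-one sums as a $\mathbb{Q}$-linear combination of strictly ordered multiple sums whose total weight is at most that of the original. The main obstacle is carrying out this combinatorial reorganization cleanly enough to verify both (i) that the resulting numerical coefficients remain independent of $q$, which should follow from the polynomial nature of the stuffle identities together with the $q$-independence of the $c_\Lambda$, and (ii) that only multiple $q$-zeta values of strictly lower weight $< \sum_j (k_j+2)$ can arise, which is forced by the weight bound already provided by Theorem~\ref{Intro-AbelianSur}.
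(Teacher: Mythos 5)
Your opening reduction is correct and agrees with the paper: on an abelian surface $K_X = e_X = 0$, so every summand of $W$ carrying $\langle K_X^{r_i}e_X^{r_i'},\cdot\rangle$ with $r_i+r_i'>0$ vanishes (equivalently, at the operator level, $\mathfrak G_k(\alpha)$ reduces to $-\sum_{\ell(\la)=k+2,\,|\la|=0}\fa_\la(\alpha)/\la!$, which is the paper's reduction $F=\widetilde F$). The rest of the argument, however, has a genuine gap: you are trying to deduce the theorem from the \emph{statement} of Theorem~\ref{Intro-AbelianSur} alone, and that statement does not carry enough information. It only asserts that $W$ is an \emph{infinite} linear combination of terms $\prod_i q^{n_iw_ip_i}/(1-q^{n_i})^{w_i}$ with coefficients independent of $q$, the $L_j$ and $X$; it says nothing about how those coefficients vary with the summation indices $n_i$. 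An arbitrary infinite combination of such terms with term-by-term constant coefficients need not be a finite linear combination of multiple $q$-zeta values. To resum, one must know that the coefficient attached to a tuple $(n_1,\dots,n_v)$ depends on the $n_i$ only through a fixed polynomial factor and is otherwise literally the same for all tuples realizing a given combinatorial type. This is exactly what forces the paper back into the trace computation: Lemma~\ref{EX=0Trqdjpx} shows that, when $e_X\alpha_i=0$, the trace $\Tr\,q^\fd\prod_i\fa_{\mu^{(i)}}(\alpha_i)/\mu^{(i)}!$ is a combination of $\prod_i(-n_i')^{w_i}q^{n_i'p_i}/(1-q^{n_i'})^{w_i}$ with $0\le p_i\le w_i$, and the proof of Theorem~\ref{EX=0Fk1NAlpha1N} checks (via the ``type'' sets $\mathfrak T$) that the coefficient is constant across all choices of the distinct integers $n_1,\dots,n_\ell$ of a fixed type, which is what permits the resummation in \eqref{EllFactorial1}--\eqref{EllFactorial2}.

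Two further inaccuracies compound this. First, the surviving blocks are \emph{not} ``precisely the shapes already appearing inside $\Theta^\alpha_k(q,z)$'': the lower-weight term produces the \emph{generalized} multiple $q$-zeta values $\sum_{n_1>\cdots>n_\ell}\prod_i(-n_i)^{w_i}q^{n_ip_i}f_i(z_1,\dots,z_N)^{n_i}/(1-q^{n_i})^{w_i}$, whose numerator factors $(-n_i)^{w_i}$ (from Heisenberg commutators) and exponents $0\le p_i\le w_i$ are invisible both in $\Theta^\alpha_k(q,z)$ and in the shape $q^{n_iw_ip_i}$, $p_i\in\{0,1\}$, recorded in Theorem~\ref{Intro-AbelianSur}. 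Second, the reordering step in the paper is not a stuffle computation: the relevant sums run over \emph{pairwise distinct} $n_1,\dots,n_\ell$ and split exactly into $\ell!$ strictly ordered sums, one per permutation; moreover the stuffle product preserves total weight rather than lowering it, so the bound $w<\sum_i(k_i+2)$ must come from the trace analysis (the count $\sum_i\ell(\mu^{(i)})/2$ versus $\ell_{i,+}+\ell_{i,-}$), not from the reordering.
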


We remark that some of the multiple $q$-zeta values mentioned in Theorem~\ref{Intro-KXEX=0} 
are in the generalized sense, i.e., in the following form:
$$
\sum_{n_1 > \cdots > n_\ell} \prod_{i=1}^{\ell} 
{(-n_i)^{w_i} q^{n_i p_i} f_i(z_1, \ldots, z_N)^{n_i} \over (1 - q^{n_i})^{w_i}} 
$$
where $0 \le p_i \le w_i$, and each $f_i(z_1, \ldots, z_N)$ is a monomial of 
$z_1^{\pm 1}, \ldots, z_N^{\pm 1}$.
We refer to \eqref{EllFactorial1} in the proof of Theorem~\ref{EX=0Fk1NAlpha1N} for more details.
As indicated in \cite{Oko}, the factors $(-n_i)^{w_i}$ in the above expression 
may be related to the operator $\displaystyle{q {{\rm d} \over {\rm d}q}}$.

The main idea in our proofs of Theorem~\ref{Intro-AbelianSur} and Theorem~\ref{Intro-KXEX=0}
is to use the structure of the Chern character operators proved in \cite{LQW2}
and the set-up of Carlsson and Okounkov in \cite{Car, CO}.
Let $G_k(\alpha, n)$ be the degree-$(2k + |\alpha|)$ component of \eqref{DefOfGGammaN},
and $\fG_{k}(\alpha)$ be the Chern character operator acting on the Fock space 
$\fock = \bigoplus_{n=0}^\infty \Hn$ via cup product by
$\bigoplus_{n=0}^\infty G_k(\alpha, n)$. Then,
$$
\ch_k (\Ln) = G_k(1_X, n) + G_{k-1}(L, n) + G_{k-2}(L^2/2, n)
$$
by the Grothendieck-Riemann-Roch Theorem.
So $\big \langle \ch_{k_1}^{L_1} \cdots \ch_{k_N}^{L_N} \big \rangle'$ is reduced to 
the series $F^{\alpha_1, \ldots, \alpha_N}_{k_1, \ldots, k_N}(q)$ 
defined by \eqref{F-generating}.
Let $\fL_1$ be the trivial line bundle on $X$ with a scaling action of $\C^*$ of 
character $1$~\footnote{Throughout the paper, we implicitly set $t = 1$ 
for the generator $t$ of the equivariant cohomology $H^*_{\C^*}({\rm pt})$ of a point.}.
Using the set-up in \cite{Car, CO}, we get
\begin{eqnarray*}    
F^{\alpha_1, \ldots, \alpha_N}_{k_1, \ldots, k_N}(q) = \Tr \, q^\fd \, W(\fL_1, z) 
\, \prod_{i=1}^N \fG_{k_i}(\alpha_i)
\end{eqnarray*}
where $W(\fL_1, z)$ is the vertex operator constructed in \cite{Car, CO}, 
and $\fd$ is the number-of-points operator (i.e., $\fd|_{H^*(\Xn)} = n \, {\rm Id}$).
The structure of the Chern character operators $\fG_{k_i}(\alpha_i)$ is given by
Theorem~\ref{char_th} which is proved in \cite{LQW2}. It implies that
the computation of $F^{\alpha_1, \ldots, \alpha_N}_{k_1, \ldots, k_N}(q)$ can be
further reduced to 
\begin{eqnarray}       \label{Intro-Trace}
\Tr \, q^\fd \, W(\fL_1, z) \, 
\prod_{i=1}^N {\fa_{\la^{(i)}}(\alpha_i) \over \la^{(i)}!}
\end{eqnarray}    
where $\la^{(i)}$ denotes a {\it generalized} partition which may also contain negative parts,
and $\la^{(i)}!$ and $\fa_{\la^{(i)}}(\alpha_i)$ are defined in 
Definition~\ref{partition}~(ii). The trace \eqref{Intro-Trace} is investigated via 
some standard but rather lengthy calculations.

As an application, our results enable us to determine some of the universal constants in 
$\displaystyle{\sum_n c\big (T_\Xn \big )} \, q^n$.
Let $C_i = {2i \choose i}/(i+1)$ be the Catalan number, and 
$\sigma_1(i) = \sum_{j|i} j$. By \cite{Boi, BN}.
there exist unique rational numbers $b_\mu, f_\mu, g_\mu, h_\mu$ 
depending only on the (usual) partitions $\mu$ such that 
$\displaystyle{\sum_n c\big (T_\Xn \big )} \, q^n$ is equal to 
$$
\exp \left ( \sum_{\mu} q^{|\mu|} \Big (b_\mu \fa_{-\mu}(1_X) 
+ f_\mu \fa_{-\mu}(e_X) + g_\mu \fa_{-\mu}(K_X) 
+ h_\mu \fa_{-\mu}(K_X^2) \Big ) \right ) \vac;
$$
in addition, $b_{2i} = 0$, 
$b_{2i-1} = (-1)^{i-1}C_{i-1}/(2i-1)$, 
$b_{(1^i)} = f_{(1^i)} = -g_{(1^i)} = \sigma_1(i)/i$, and $h_{(1^i)} = 0$.
In Theorem~\ref{bi1j}, we determine $b_{(i,1^j)}$ for $i \ge 2$ and $j \ge 0$.

The paper is organized as follows. In Sect.~\ref{sect_general}, 
we review the Heisenberg operators of Grojnowski and Nakajima,
and the structure of the Chern character operators.
In Sect.~\ref{sect_CO}, we recall the vertex operator of Carlsson and Okounkov.
In Sect.~\ref{sect_trace}, we compute the trace \eqref{Intro-Trace}.
Theorem~\ref{Intro-AbelianSur} and Theorem~\ref{Intro-KXEX=0} are proved 
in Sect.~\ref{sect_chk}. In Sect.~\ref{sect_app}, we determine the universal constants 
$b_{(i,1^j)}$ for $i \ge 2$ and $j \ge 0$.

\medskip\noindent
{\bf Convention.} All the (co)homology groups are in $\mathbb C$-coefficients unless 
otherwise specified. 
For $\alpha, \beta \in H^*(Y)$ where $Y$ is a smooth projective variety,
$\alpha \beta$ and $\alpha \cdot \beta$ denote the cup product $\alpha \cup \beta$, 
and $\langle \alpha, \beta \rangle$ denotes 
$\displaystyle{\int_Y \alpha \beta}$.

\bigskip\noindent
{\bf Acknowledgment.} The authors thank Professors Dan Edidin, Wei-ping Li and Weiqiang Wang 
for stimulating discussions and valuable helps. The second author also thanks the Mathematics 
Department of the University of Missouri for its hospitality during his visit in 
August 2015 as a Miller's Scholar.
\section{\bf Basics on Hilbert schemes of points on surfaces} 
\label{sect_general}

In this section, we will review some basic aspects of the Hilbert schemes of points on surfaces.
We will recall the definition of the Heisenberg operators of Grojnowski and Nakajima, 
and the structure of the Chern character operators.

Let $X$ be a smooth projective complex surface,
and $\Xn$ be the Hilbert scheme of $n$ points in $X$. 
An element in $\Xn$ is represented by a
length-$n$ $0$-dimensional closed subscheme $\xi$ of $X$. For $\xi
\in \Xn$, let $I_{\xi}$ be the corresponding sheaf of ideals. It
is well known that $\Xn$ is smooth. 
Define the universal codimension-$2$ subscheme:
\begin{eqnarray*}
{ \mathcal Z}_n=\{(\xi, x) \subset \Xn\times X \, | \, x\in
 {\rm Supp}{(\xi)}\}\subset \Xn\times X.
\end{eqnarray*}
Denote by $p_1$ and $p_2$ the projections of $\Xn \times X$ to
$\Xn$ and $X$ respectively. Let
\begin{eqnarray*}
\fock = \bigoplus_{n=0}^\infty \Hn
\end{eqnarray*}
be the direct sum of total cohomology groups of the Hilbert schemes $\Xn$.
For $m \ge 0$ and $n > 0$, let $Q^{[m,m]} = \emptyset$ and define
$Q^{[m+n,m]}$ to be the closed subset:
$$\{ (\xi, x, \eta) \in X^{[m+n]} \times X \times X^{[m]} \, | \,
\xi \supset \eta \text{ and } \mbox{Supp}(I_\eta/I_\xi) = \{ x \}
\}.$$

We recall Nakajima's definition of the Heisenberg operators \cite{Nak}. 
Let $\alpha \in H^*(X)$. Set $\mathfrak a_0(\alpha) =0$.
For $n > 0$, the operator $\mathfrak
a_{-n}(\alpha) \in \End(\fock)$ is
defined by
$$
\mathfrak a_{-n}(\alpha)(a) = \w{p}_{1*}([Q^{[m+n,m]}] \cdot
\w{\rho}^*\alpha \cdot \w{p}_2^*a)
$$
for $a \in H^*(X^{[m]})$, where $\w{p}_1, \w{\rho},
\w{p}_2$ are the projections of $X^{[m+n]} \times X \times
X^{[m]}$ to $X^{[m+n]}, X, X^{[m]}$ respectively. Define
$\mathfrak a_{n}(\alpha) \in \End(\fock)$ to be $(-1)^n$ times the
operator obtained from the definition of $\mathfrak
a_{-n}(\alpha)$ by switching the roles of $\w{p}_1$ and $\w{p}_2$. 
We often refer to $\mathfrak a_{-n}(\alpha)$ (resp. $\mathfrak a_n(\alpha)$) 
as the {\em creation} (resp. {\em annihilation})~operator. 
The following is from \cite{Nak, Gro}. Our convention of the sign follows \cite{LQW2}.

\begin{theorem} \label{commutator}
The operators $\mathfrak a_n(\alpha)$ satisfy
the commutation relation:
\begin{eqnarray*}
\displaystyle{[\mathfrak a_m(\alpha), \mathfrak a_n(\beta)] = -m
\; \delta_{m,-n} \cdot \langle \alpha, \beta \rangle \cdot {\rm Id}_{\fock}}.
\end{eqnarray*}
The space $\fock$ is an irreducible module over the Heisenberg
algebra generated by the operators $\mathfrak a_n(\alpha)$ with a
highest~weight~vector $\vac=1 \in H^0(X^{[0]}) \cong \C$.
\end{theorem}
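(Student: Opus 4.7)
The plan is to establish the commutation relation via correspondence calculus on the fiber products of the cycles $Q^{[k+n,k]}$ inside triple products $X^{[k+n]}\times X\times X^{[k]}$, and to establish irreducibility by showing that the vacuum vector $\vac$ generates $\fock$ via Nakajima's partition basis.

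For the commutator, I would split into cases by the signs of $m$ and $n$. When both are negative, both compositions $\mathfrak{a}_m(\alpha)\mathfrak{a}_n(\beta)$ and $\mathfrak{a}_n(\beta)\mathfrak{a}_m(\alpha)$ are pushforwards of correspondences built from the nested fiber product $Q^{[k+|m|+|n|,k+|n|]}\times_{X^{[k+|n|]}} Q^{[k+|n|,k]}$; these two correspondences parameterize the same chains of ideals up to permuting the two insertion points of $X$, and the compatible symmetry in the cohomology insertions $\alpha,\beta$ makes the two operators equal, hence commuting. The annihilation--annihilation case is dual, and the sign factors $(-1)^n$ in the definition of $\mathfrak{a}_n$ are chosen precisely so the equality persists. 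The main case, and the main obstacle, is $[\mathfrak{a}_m(\alpha),\mathfrak{a}_{-n}(\beta)]$ with $m,n>0$. Here I would form the two relevant fiber products: one via $X^{[k]}\leftarrow Q^{[k+n,k]}\to X^{[k+n]}\leftarrow Q^{[k+n,k+n-m]}\to X^{[k+n-m]}$, the other via $X^{[k]}\leftarrow Q^{[k,k-m]}\to X^{[k-m]}\leftarrow Q^{[k+n-m,k-m]}\to X^{[k+n-m]}$. Set-theoretically these agree on the open locus in $X\times X$ where the two moving points are distinct, so the commutator is supported on the diagonal $\Delta\subset X\times X$. A dimension count forces the pushforward from $\Delta$ to vanish unless $m=n$, and when $m=n$ an excess intersection computation along $\Delta$ identifies the excess bundle with $T_X|_\Delta$; the resulting top Chern class, paired against $\alpha\cup\beta$ on $\Delta\cong X$, produces exactly the factor $-m\,\delta_{m,-n}\langle\alpha,\beta\rangle\cdot{\rm Id}_{\fock}$, with the integer $-m$ coming from the local computation on the punctual Hilbert scheme at a single point of $X$ (the Ellingsrud--Str{\o}mme/Nakajima local model). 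Tracking the sign conventions of \cite{Nak,Gro,LQW2} is the delicate bookkeeping step here.

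For irreducibility, I would argue by induction on $n$ that the cyclic Heisenberg submodule containing $\vac$ exhausts $\fock$. The strategy is to exhibit, for each $n$, the monomials $\mathfrak{a}_{-\lambda_1}(\gamma_1)\cdots\mathfrak{a}_{-\lambda_r}(\gamma_r)\vac$, as $\lambda=(\lambda_1,\ldots,\lambda_r)$ ranges over partitions of $n$ and the $\gamma_i$ range over a homogeneous basis of $H^*(X)$, as a spanning set for $H^*(X^{[n]})$. Geometrically, these classes are supported on the closed strata of the Hilbert--Chow morphism $X^{[n]}\to{\rm Sym}^n(X)$ indexed by $\lambda$, and counting them in each cohomological degree matches G\"ottsche's formula for the Betti numbers of $X^{[n]}$; hence the monomials form a basis, and in particular span. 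Combined with the just-proved commutation relations, this shows that $\fock$ is an irreducible module over the Heisenberg algebra with highest weight vector $\vac$.
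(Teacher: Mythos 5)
The paper does not prove this statement: Theorem~\ref{commutator} is quoted verbatim from \cite{Nak, Gro} (with the sign convention of \cite{LQW2}), so there is no in-paper argument to compare yours against. Judged on its own terms, your sketch is a faithful outline of Nakajima's original proof, and the overall architecture (correspondence calculus on the $Q^{[k+n,k]}$, reduction of the commutator to the diagonal of $X\times X$, dimension count to kill the off-resonance cases, and Göttsche's formula for the module structure) is the right one.

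Two places where the sketch underestimates the actual work. First, in the creation--creation case the two compositions are \emph{not} literally the same correspondence up to permuting insertion points: they agree only over the open locus where the two added points are distinct, and the diagonal contribution must be killed by the same kind of dimension estimate you invoke later, not by a symmetry argument. Second, and more seriously, the constant $-m$ is the genuinely hard part of the theorem and your one-sentence appeal to an excess bundle equal to $T_X|_\Delta$ is not how it is established in the literature. The geometric argument only shows that $[\mathfrak a_m(\alpha),\mathfrak a_{-m}(\beta)]=c_m\langle\alpha,\beta\rangle\,\Id$ for a universal constant $c_m$ independent of $X$, $\alpha$, $\beta$; pinning down $c_m$ requires either the explicit Ellingsrud--Str{\o}mme computation on $(\C^2)^{[n]}$ or a comparison of graded dimensions with G\"ottsche's formula (which determines $|c_m|$) plus a separate sign argument. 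Finally, your irreducibility paragraph is phrased circularly (you cannot conclude ``basis'' from a count unless you already have independence or spanning); the correct order is that linear independence of the monomials $\fa_{-\lambda_1}(\gamma_1)\cdots\fa_{-\lambda_r}(\gamma_r)\vac$ follows from the already-proved commutation relations and the nondegeneracy of $\langle\cdot,\cdot\rangle$ on $H^*(X)$, after which the dimension count against G\"ottsche's formula shows they span, and irreducibility is then the standard representation-theoretic fact about Fock modules. With those repairs your proposal reproduces the known proof.
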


The Lie bracket in the above theorem is understood in the super
sense according to the parity of the cohomology degrees of the
cohomology classes involved. It follows from
Theorem~\ref{commutator} that the space $\fock$ is linearly
spanned by all the Heisenberg monomials $\mathfrak
a_{n_1}(\alpha_1) \cdots \mathfrak a_{n_k}(\alpha_k) \vac$
where $k \ge 0$ and $n_1, \ldots, n_k < 0$.

\begin{definition} \label{partition}
\begin{enumerate}
\item[{\rm (i)}] 
Let $\alpha \in H^*(X)$ and $k \ge 1$. Define $\tau_{k*}: H^*(X) \to H^*(X^k)$ 
to be the linear map induced by the diagonal embedding $\tau_k: X \to X^k$, and
$$
(\mathfrak a_{m_1} \cdots \mathfrak a_{m_k})(\alpha)
= \mathfrak a_{m_1} \cdots \mathfrak a_{m_k}(\tau_{*k}\alpha)
= \sum_j \mathfrak a_{m_1}(\alpha_{j,1}) \cdots \mathfrak a_{m_k}(\alpha_{j,k})
$$ 
when $\tau_{k*}\alpha = \sum_j \alpha_{j,1} \otimes \cdots 
\otimes \alpha_{j, k}$ via the K\"unneth decomposition of $H^*(X^k)$.

\item[{\rm (ii)}]
Let $\lambda = \big (\cdots
(-2)^{m_{-2}}(-1)^{m_{-1}} 1^{m_1}2^{m_2} \cdots \big )$ be a {\em
generalized partition} of the integer $n = \sum_i i m_i$ whose
part $i\in \Z$ has multiplicity $m_i$. Define $\ell(\lambda) =
\sum_i m_i$, $|\lambda| = \sum_i i m_i = n$, $\lambsq  = \sum_i
i^2 m_i$, $\lambda! = \prod_i m_i!$, and
\begin{eqnarray*}
\mathfrak a_{\lambda}(\alpha) = \left ( \prod_i (\mathfrak
a_i)^{m_i} \right ) (\alpha)
\end{eqnarray*}
where the product $\prod_i (\mathfrak
a_i)^{m_i} $ is understood to be
$\cdots \mathfrak a_{-2}^{m_{-2}} \mathfrak a_{-1}^{m_{-1}}
 \mathfrak a_{1}^{m_{1}} \mathfrak a_{2}^{m_{2}}\cdots$.
The set of all generalized partitions is denoted by $\Wcp$.

\item[{\rm (iii)}] A generalized partition becomes a {\em partition}
in the usual sense if the multiplicity $m_ i = 0$ for all $i < 0$. 
The set of all partitions is denoted by $\cp$.
\end{enumerate}
\end{definition}

For $n > 0$ and a homogeneous class $\alpha \in H^*(X)$, let
$|\alpha| = s$ if $\alpha \in H^s(X)$, and let $G_k(\alpha, n)$ be
the homogeneous component in $H^{|\alpha|+2k}(\Xn)$ of
\begin{eqnarray}    \label{DefOfGGammaN}
 G(\alpha, n) = p_{1*}(\ch({\mathcal O}_{{\mathcal Z}_n}) \cdot p_2^*\alpha
\cdot p_2^*{\rm td}(X) ) \in \Hn
\end{eqnarray}
where $\ch({\mathcal O}_{{\mathcal Z}_n})$ denotes the Chern
character of ${\mathcal O}_{{\mathcal Z}_n}$
and ${\rm td}(X) $ denotes the Todd class. We extend the notion $G_k(\alpha,
n)$ linearly to an arbitrary class $\alpha \in H^*(X)$, and set $G(\alpha, 0) =0$. 
It was proved in \cite{LQW1} that the cohomology ring of $\Xn$ is
generated by the classes $G_{k}(\alpha, n)$ where $0 \le k < n$
and $\alpha$ runs over a linear basis of $H^*(X)$. 
The {\it Chern character operator} ${\mathfrak G}_k(\alpha) \in
\End({\fock})$ is the operator acting on $H^*(\Xn)$ by the cup product with $G_k(\alpha, n)$. 
The following is from \cite{LQW2}.

\begin{theorem} \label{char_th}
Let $k \ge 0$ and $\alpha\in H^*(X)$. Then, $\mathfrak G_k(\alpha)$ is equal to
\begin{eqnarray*}
& &- \sum_{\ell(\lambda) = k+2, |\lambda|=0}
   {1 \over \lambda!} \mathfrak a_{\lambda}(\alpha)
   + \sum_{\ell(\lambda) = k, |\lambda|=0}
   {\lambsq - 2 \over 24\lambda!}
   \mathfrak a_{\lambda}(e_X \alpha)  \\
&+&\sum_{\ell(\lambda) = k+1, |\lambda|=0} {g_{1, \lambda} \over \la!}
   \mathfrak a_{\lambda}(K_X \alpha)
   + \sum_{\ell(\lambda) = k, |\lambda|=0} {g_{2, \lambda} \over \la!}
   \mathfrak a_{\lambda}(K_X^2 \alpha)
\end{eqnarray*}
where all the numbers $g_{1, \lambda}$ and $g_{2, \lambda}$ are independent of 
$X$ and $\alpha$.
\end{theorem}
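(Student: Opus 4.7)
The plan is to expand $\mathfrak G_k(\alpha)$ in the Heisenberg basis and then fix its form by grading, cohomological degree, and a recursion driven by commutators with the Heisenberg operators. Since $G_k(\alpha,n) \in H^*(\Xn)$, cup product preserves the $n$-grading, so $\mathfrak G_k(\alpha)$ commutes with $\mathfrak d$ and must be a (possibly infinite) linear combination of monomials $\mathfrak a_\lambda(\gamma)$ with $|\lambda|=0$ and $\gamma \in H^*(X)$. The definition $G(\alpha,n) = p_{1*}(\ch(\mathcal O_{\mathcal Z_n}) \cdot p_2^*\alpha \cdot p_2^*\,{\rm td}(X))$ together with the surface Todd class ${\rm td}(X) = 1 - K_X/2 + (K_X^2 + e_X)/12$ restricts the possible inputs $\gamma$ to $\alpha$, $K_X\alpha$, $K_X^2\alpha$, and $e_X\alpha$. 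Matching cohomological degrees on both sides then forces $\ell(\lambda) = k+2,\, k+1,\, k,\, k$ in the four respective cases, producing exactly the four sums displayed in the statement.

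To pin down the numerical coefficients, the next step is to derive a recursion by commuting $\mathfrak G_k(\alpha)$ with Nakajima's Heisenberg operators. Explicitly, using the geometry of the nested scheme $Q^{[m+n,m]}$ together with excess intersection on the diagonal, one computes $[\mathfrak a_n(\beta), \mathfrak G_k(\alpha)]$ and obtains a universal identity relating $\mathfrak G_k(\alpha)$ to operators of Chern degree $k-1$ plus $K_X$- and $e_X$-type correction terms. The base case $k=0$ can be verified directly against Lehn's formula for $c_1$ of tautological bundles via Grothendieck--Riemann--Roch. Since $\fock$ is irreducible over the Heisenberg algebra with cyclic vector $\vac$, this recursion determines every coefficient uniquely. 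In particular, the leading coefficient $-1/\lambda!$ (when $\gamma=\alpha$) originates from the linear piece ${\rm ch}_1(\mathcal O_{\mathcal Z_n})$ of the Chern character of the structure sheaf of the universal subscheme, while $(\lambsq-2)/24$ (when $\gamma=e_X\alpha$) arises from the first normal-bundle correction at the diagonal combined with the quadratic piece of $\ch(\mathcal O_{\mathcal Z_n})$.

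The main obstacle will be the explicit excess-intersection computation on $Q^{[m+n,m]}$ required to extract the commutator, and in particular the careful bookkeeping of the numerical factor $(\lambsq-2)/24$ coming from the normal bundle of the diagonal blended with the Todd class expansion on $X$. Once the recursion is in hand, universality of the remaining coefficients $g_{1,\lambda}$ and $g_{2,\lambda}$ follows automatically: every constant produced by the recursion depends only on the Heisenberg structure constants, on the Todd class of a surface, and on intersection-theoretic data on the diagonal, none of which involve $X$ or $\alpha$ beyond the pairings that are already displayed in the formula.
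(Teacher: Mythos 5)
The paper does not prove Theorem~\ref{char_th}: it is quoted from \cite{LQW2} (``The following is from \cite{LQW2}''), so there is no internal proof to compare against. Judged on its own terms, your outline reproduces the broad strategy of \cite{LQW2} --- expand in Heisenberg monomials of conformal weight zero, constrain by bidegree, and pin down coefficients via commutators with the $\mathfrak a_n(\beta)$ together with irreducibility of $\fock$ and $\mathfrak G_k(\alpha)\vac=0$ --- and the bidegree bookkeeping (each $\mathfrak a_\lambda(\gamma)$ with $|\lambda|=0$ has cohomological degree $2(\ell(\lambda)-2)+|\gamma|$, forcing $\ell(\lambda)=k+2,\,k+1,\,k,\,k$ for $\gamma=\alpha,\,K_X\alpha,\,e_X\alpha,\,K_X^2\alpha$) is correct. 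But the two steps that carry essentially all of the content are asserted rather than proved.

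First, the claim that the Todd class ``restricts the possible inputs $\gamma$'' to the four listed classes does not follow from inspecting ${\rm td}(X)$ inside the pushforward defining $G(\alpha,n)$: the theorem concerns the operator of cup product by $G_k(\alpha,n)$, and a priori its expansion in Heisenberg monomials could involve arbitrary K\"unneth components of diagonal classes, i.e.\ terms not of the form $(\mathfrak a_{m_1}\cdots\mathfrak a_{m_\ell})(\gamma)$ with $\gamma$ a universal class times $\alpha$. Ruling these out is a genuine universality (``transfer'') statement that \cite{LQW2} must establish; only after that does $\dim X=2$ reduce the candidates to $1_X,K_X,e_X,K_X^2$ times $\alpha$. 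Second, the ``recursion driven by commutators'' is precisely Lehn's theorem on $[\mathfrak G_1(1_X),\mathfrak a_{-n}(\gamma)]$ and its extension to $[\mathfrak G_k(\alpha),\mathfrak a_{-1}(\beta)]$; the excess-intersection computation you defer is the entire difficulty, and the coefficient $(\lambsq-2)/24$ is not read off from ``the quadratic piece of $\ch(\mathcal O_{\mathcal Z_n})$'' but emerges from the Virasoro-type terms in that commutator. The logical skeleton (commutation relations plus $\mathfrak G_k(\alpha)\vac=0$ plus irreducibility determine the operator uniquely) is sound; what is missing is the two theorems that make it run, so as written this is a plan rather than a proof.
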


For $\alpha_1, \ldots, \alpha_N \in H^*(X)$ and integers $k_1, \ldots, k_N \ge 0$, 
define the series
\begin{eqnarray}    \label{F-generating}
F^{\alpha_1, \ldots, \alpha_N}_{k_1, \ldots, k_N}(q) = 
\sum_n q^n \int_\Xn \left ( \prod_{i=1}^N G_{k_i}(\alpha_i, n) \right ) c\big (T_\Xn \big ).
\end{eqnarray}
In view of G\" ottsche's Theorem in \cite{Got}, we have
$F(q) = (q; q)_\infty^{-\chi(X)}$.

The following is from \cite{LQW3} and will be used throughout the paper.

\begin{lemma} \label{tau_k_tau_{k-1}}
Let $k, s \ge 1$, $n_1, \ldots, n_k, m_1, \ldots, m_s \in \Z$, and
$\alpha, \beta \in H^*(X)$.
\begin{enumerate}
\item[{\rm (i)}] 
The commutator $[(\mathfrak a_{n_1} \cdots \mathfrak a_{n_{k}}) (\alpha),
(\mathfrak a_{m_1} \cdots \mathfrak a_{m_{s}})(\beta)]$ is
equal to
\begin{eqnarray*}
-\sum_{t=1}^k \sum_{j=1}^s n_t \delta_{n_t,-m_j} \cdot \left (
\prod_{\ell=1}^{j-1} \mathfrak a_{m_\ell} \prod_{1 \le u \le k, u
\ne t} \mathfrak a_{n_u} \prod_{\ell=j+1}^{s} \mathfrak a_{m_\ell}
\right )(\alpha\beta).
\end{eqnarray*}

\item[{\rm (ii)}] Let $j$ satisfy $1 \le j < k$. Then,
$(\mathfrak a_{n_1} \cdots \mathfrak a_{n_k})(\alpha)$ is equal to
\begin{eqnarray*}
\left ( \prod_{1 \le s < j} \mathfrak a_{n_s} \cdot \mathfrak
a_{n_{j+1}} \mathfrak a_{n_{j}} \cdot \prod_{j+1 < s \le k}
\mathfrak a_{n_s} \right ) (\alpha) - n_j
\delta_{n_j,-n_{j+1}} \left ( \prod_{1 \le s \le k \atop s \ne j, j+1} 
\mathfrak a_{n_s} \right )(e_X\alpha).
\end{eqnarray*}
\end{enumerate}
\end{lemma}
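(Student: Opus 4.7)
The plan is to reduce both parts of the lemma to the elementary Heisenberg commutator of Theorem~\ref{commutator}, combined with two natural contraction identities for push-forwards along small diagonals. I would first expand $\tau_{k*}\alpha=\sum_r\alpha_{r,1}\otimes\cdots\otimes\alpha_{r,k}$ and $\tau_{s*}\beta=\sum_{r'}\beta_{r',1}\otimes\cdots\otimes\beta_{r',s}$ via the K\"unneth decomposition, so that each compound operator breaks up into a sum of ordinary Heisenberg monomials indexed by $r$ or $r'$ in the basic operators of Theorem~\ref{commutator}.

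For part~(i), I would apply the iterated super-Leibniz rule to the super-commutator of the two products. Because each elementary bracket $[\mathfrak a_{n_t}(\alpha_{r,t}),\mathfrak a_{m_j}(\beta_{r',j})]=-n_t\,\delta_{n_t,-m_j}\,\langle\alpha_{r,t},\beta_{r',j}\rangle\cdot\Id$ is central, pulling the scalar outside leaves the surviving operators in exactly the order seen on the right-hand side of~(i): the $\beta$-operators in positions $\ell<j$ to the left, the $\alpha$-operators with position $t$ removed in the middle, and the $\beta$-operators in positions $\ell>j$ to the right. Summing over $r,r'$, the key contraction identity
\[
\sum_{r,r'}\langle\alpha_{r,t},\beta_{r',j}\rangle\,\Bigl(\bigotimes_{u\ne t}\alpha_{r,u}\Bigr)\otimes\Bigl(\bigotimes_{\ell\ne j}\beta_{r',\ell}\Bigr)=\tau_{(k+s-2)*}(\alpha\beta),
\]
together with the appropriate interleaving prescribed by the Leibniz expansion, repackages the result as the compound operator of~(i) applied to $\alpha\beta$. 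This identity follows from the dual-basis formula $\gamma=\sum_\mu\langle e_\mu,\gamma\rangle\,e^\mu$ for any basis $\{e_\mu\}$ of $H^*(X)$ with dual basis $\{e^\mu\}$, applied once on each side of the pairing.

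For part~(ii), I would again expand both sides via K\"unneth. The left-hand side is $\sum_r\mathfrak a_{n_1}(\alpha_{r,1})\cdots\mathfrak a_{n_j}(\alpha_{r,j})\mathfrak a_{n_{j+1}}(\alpha_{r,j+1})\cdots\mathfrak a_{n_k}(\alpha_{r,k})$, while the first term on the right-hand side has the operators in positions $j$ and $j+1$ swapped but the K\"unneth arguments still attached to their original slots. Using the permutation symmetry of $\tau_{k*}\alpha$ to relabel the summation index so that each operator acts on its own slot, the difference of the two sums telescopes to
\[
\sum_r\mathfrak a_{n_1}(\alpha_{r,1})\cdots[\mathfrak a_{n_j}(\alpha_{r,j}),\mathfrak a_{n_{j+1}}(\alpha_{r,j+1})]\cdots\mathfrak a_{n_k}(\alpha_{r,k}).
\]
Theorem~\ref{commutator} collapses the inner bracket to the scalar $-n_j\,\delta_{n_j,-n_{j+1}}\,\langle\alpha_{r,j},\alpha_{r,j+1}\rangle$, and the remaining identity
\[
\sum_r\langle\alpha_{r,j},\alpha_{r,j+1}\rangle\,\alpha_{r,1}\otimes\cdots\otimes\widehat{\alpha_{r,j}}\otimes\widehat{\alpha_{r,j+1}}\otimes\cdots\otimes\alpha_{r,k}=\tau_{(k-2)*}(e_X\alpha)
\]
is the source of the Euler class $e_X$: unfolding $\tau_{k*}\alpha$ by iterating $\tau_{2*}$ and contracting two adjacent factors produces exactly $\sum_\mu e_\mu e^\mu=\tau_2^*\tau_{2*}1_X=e_X$ on the remaining $k-2$ positions.

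The main obstacle will be the bookkeeping of Koszul signs coming both from the iterated super-Leibniz rule and from re-indexing the permutation-symmetric K\"unneth decomposition of $\tau_{k*}\alpha$; these signs must cancel precisely against those implicit in the compound operator notation $(\mathfrak a_{n_1}\cdots\mathfrak a_{n_k})(\cdot)$, so that no sign factors appear in the final formulas beyond the single overall minus signs already visible in the statement.
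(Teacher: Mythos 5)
The paper does not prove this lemma: it is quoted verbatim from \cite{LQW3} ("The following is from \cite{LQW3}..."), so there is no in-paper argument to compare against. Judged on its own, your proposal is the standard proof and is essentially correct: Künneth-expand the compound operators, reduce to the elementary bracket of Theorem~\ref{commutator}, and repackage the contracted Künneth components using the two diagonal identities you state (contraction of one slot of $\tau_{k*}\alpha$ against one slot of $\tau_{s*}\beta$ yields $\tau_{(k+s-2)*}(\alpha\beta)$; self-contraction of two slots of $\tau_{k*}\alpha$ yields $\tau_{(k-2)*}(e_X\alpha)$ via $\tau_2^*\tau_{2*}1_X=e_X$). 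Both identities are true and your justifications (dual-basis expansion, self-intersection of the diagonal) are the right ones.

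One step deserves more care than you give it. In part (i) the surviving operators must land in the \emph{specific} interleaved order $\prod_{\ell<j}\mathfrak a_{m_\ell}\cdot\prod_{u\ne t}\mathfrak a_{n_u}\cdot\prod_{\ell>j}\mathfrak a_{m_\ell}$, and since distinct Heisenberg operators need not commute, different orderings of the same letters differ by genuine double-contraction terms. You get the stated ordering for free only if you expand the Leibniz rule in the right nesting, namely
\[
[A,\textstyle\prod_j B_j]=\sum_j\Bigl(\prod_{\ell<j}B_\ell\Bigr)[A,B_j]\Bigl(\prod_{\ell>j}B_\ell\Bigr),\qquad
[\textstyle\prod_t A_t,B_j]=\sum_t\Bigl(\prod_{u<t}A_u\Bigr)[A_t,B_j]\Bigl(\prod_{u>t}A_u\Bigr),
\]
after which the centrality of $[A_t,B_j]$ gives exactly the claimed interleaving with no residual terms. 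If one instead expands over the $A$'s on the outside, the raw output has the order $\prod_{u<t}\mathfrak a_{n_u}\cdot\prod_{\ell<j}\mathfrak a_{m_\ell}\cdot\prod_{\ell>j}\mathfrak a_{m_\ell}\cdot\prod_{u>t}\mathfrak a_{n_u}$, and one must check that the double-contraction corrections cancel in pairs. Your write-up asserts the correct ordering but does not say which expansion produces it; making that explicit would close the only real gap in the sketch. The Koszul-sign bookkeeping you flag is indeed the other place where errors are easy to make, but it is routine.
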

\section{\bf The vertex operators of Carlsson and Okounkov} 
\label{sect_CO}

In this section, we will recall the vertex operators constructed in \cite{CO, Car}, 
and use them to rewrite the generating 
series $F^{\alpha_1, \ldots, \alpha_N}_{k_1, \ldots, k_N}(q)$
defined in \eqref{F-generating}.

Let $L$ be a line bundle over the smooth projective surface $X$. 
Let $\mathbb E_L$ be the virtual vector bundle on $X^{[k]} \times X^{[\ell]}$
whose fiber at $(I, J) \in X^{[k]} \times X^{[\ell]}$ is given by
$$
\mathbb E_L|_{(I,J)} = \chi(\mathcal O, L) - \chi(J, I \otimes L).
$$
Let $\fL_m$ be the trivial line bundle on $X$ with a scaling action of $\C^*$ of 
character $m$, and let $\Delta_n$ be the diagonal in $\Xn \times \Xn$. Then, 
\begin{eqnarray}   \label{ResOfEToD}
\mathbb E_{\fL_m}|_{\Delta_n} = T_{\Xn, m},
\end{eqnarray}
the tangent bundle $T_\Xn$ with a scaling action of $\C^*$ of character $m$. 
By abusing notations, we also use $L$ to denote its first Chern class. Put
\begin{eqnarray}   \label{GammaLz}
\Gamma_{\pm}(L, z) = \exp \left ( \sum_{n>0} {z^{\mp n} \over n} \fa_{\pm n}(L) \right ).
\end{eqnarray}

\begin{remark}  \label{SignDiff}
There is a sign difference between the Heisenberg commutation relations used in 
\cite{Car} (see p.3 there) and in this paper (see Theorem~\ref{commutator}).
So for $n > 0$, our Heisenberg operators $\fa_{-n}(L)$ and $\fa_{n}(-L)$
are equal to the Heisenberg operators $\fa_{-n}(L)$ and $\fa_{n}(L)$ in \cite{Car}.
Accordingly, our operators $\Gamma_-(L, z)$ and $\Gamma_+(-L, z)$ are equal to
the operators $\Gamma_-(L, z)$ and $\Gamma_+(L, z)$ in \cite{Car}.
\end{remark}

The following commutation relations can be found in \cite{Car} (see Remark~\ref{SignDiff}):
\begin{eqnarray}   
&[\Gamma_+(L, x), \Gamma_+(L', y)] = [\Gamma_-(L, x), \Gamma_-(L', y)] = 0,&
                                           \label{CarLemma5.1}          \\
&\Gamma_+(L, x)\Gamma_-(L', y) = (1-y/x)^{\langle L, L' \rangle} \,
    \Gamma_-(L', y) \Gamma_+(L, x).&     \label{CarLemma5.2}
\end{eqnarray}

Let $W(L, z): \fock \to \fock$ be the vertex operator constructed in \cite{CO, Car} 
where $z$ is a formal variable. By \cite{Car}, $W(L, z)$ is defined via the pairing
\begin{eqnarray}   \label{def-WLz}
\langle W(L, z) \eta, \xi \rangle = \int_{X^{[k]} \times X^{[\ell]}}
(\eta \otimes \xi) \, c_{k+\ell}(\mathbb E_L)
\end{eqnarray}
for $\eta \in H^*(X^{[k]})$ and $\xi \in H^*(X^{[\ell]})$.
The main result in \cite{Car} is (see Remark~\ref{SignDiff}):
\begin{eqnarray}   \label{WLz}
W(L, z) = \Gamma_-(L-K_X, z) \, \Gamma_+(-L, z).
\end{eqnarray}

\begin{lemma}  \label{FtoW}
Let $\fd$ be the number-of-points operator, i.e., $\fd|_{H^*(\Xn)} = n \, {\rm Id}$. Then,
\begin{eqnarray}   \label{FtoW.0}
F^{\alpha_1, \ldots, \alpha_N}_{k_1, \ldots, k_N}(q) = \Tr \, q^\fd \, W(\fL_1, z) 
\, \prod_{i=1}^N \fG_{k_i}(\alpha_i).
\end{eqnarray}
\end{lemma}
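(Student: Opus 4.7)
The plan is to unwind the trace on the Fock space by Poincar\'e duality and then invoke the defining pairing \eqref{def-WLz} for $W(\fL_1, z)$, converting matrix elements on $\Xn \times \Xn$ into integrals on $\Xn$ via the diagonal.

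First I will fix a homogeneous basis $\{e_{n,j}\}$ of $H^*(\Xn)$ together with its Poincar\'e dual basis $\{e^{n,j}\}$, and use the standard K\"unneth identity $[\Delta_n] = \sum_j e_{n,j} \otimes e^{n,j}$ in $H^*(\Xn \times \Xn)$. Noting that $q^\fd$ acts as $q^n$ on $H^*(\Xn)$ and that each $\fG_{k_i}(\alpha_i)$ acts by cup product with $G_{k_i}(\alpha_i, n)$ (hence preserves level), I will expand the trace as
$$\Tr\, q^\fd\, W(\fL_1, z) \prod_{i=1}^N \fG_{k_i}(\alpha_i) = \sum_n q^n \sum_j \left\langle W(\fL_1, z)\Big(\prod_{i=1}^N G_{k_i}(\alpha_i, n) \cdot e_{n,j}\Big),\, e^{n,j} \right\rangle.$$
Then I will substitute the definition \eqref{def-WLz} of $W(\fL_1, z)$ (with $k = \ell = n$) into each inner pairing, collapse $\sum_j e_{n,j} \otimes e^{n,j} = [\Delta_n]$, and apply the projection formula along $\Delta_n : \Xn \hookrightarrow \Xn \times \Xn$. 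Combined with \eqref{ResOfEToD}, which gives $\Delta_n^* c_{2n}(\mathbb E_{\fL_1}) = c_{2n}(T_{\Xn, 1})$, this brings the trace into the form
$$\sum_n q^n \int_\Xn \prod_{i=1}^N G_{k_i}(\alpha_i, n) \cdot c_{2n}(T_{\Xn, 1}).$$

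Finally, by the splitting principle and the paper's convention $t = 1$ for the equivariant parameter, the equivariant Chern roots of $T_{\Xn, 1}$ are $x_j + t$ where $\{x_j\}$ are the Chern roots of $T_\Xn$, so $c_{2n}(T_{\Xn, 1})|_{t=1} = \prod_j (x_j + 1) = c(T_\Xn)$; this yields exactly $F^{\alpha_1, \ldots, \alpha_N}_{k_1, \ldots, k_N}(q)$ in the form \eqref{F-generating}.

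The main subtlety I anticipate is this equivariant specialization step: relation \eqref{ResOfEToD} produces the equivariant top Chern class $c_{2n}(T_{\Xn, 1})$, and one has to verify carefully that evaluating at $t=1$ turns it into the ordinary total Chern class $c(T_\Xn)$ that appears in $F$. A companion remark worth recording is that the trace is automatically independent of the formal variable $z$: since $q^\fd \prod_i \fG_{k_i}(\alpha_i)$ preserves the number-of-points grading, the trace sees only the level-preserving block of $W(\fL_1, z)$, which is $z$-independent by the very form of \eqref{def-WLz}.
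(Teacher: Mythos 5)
Your proposal is correct and follows essentially the same route as the paper's proof: expand the trace level by level using a basis and its dual, recognize the diagonal class, invoke the defining pairing \eqref{def-WLz} for $W(\fL_1,z)$, and use \eqref{ResOfEToD} together with the convention $t=1$ to identify $c_{2n}(\mathbb E_{\fL_1})|_{\Delta_n}$ with $c(T_\Xn)$. The only cosmetic difference is the sign $(-1)^{|e_j|}$ in the K\"unneth decomposition of $[\Delta_n]$, which the paper carries explicitly and you absorb into your choice of dual basis.
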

\noindent
{\it Proof.}
We will show that the coefficients of $q^n$ on both sides of \eqref{FtoW.0} are equal.
Let $\{e_j\}_j$ be a linear basis of $H^*(\Xn)$. 
Then the fundamental class of the diagonal $\Delta_n$ in $\Xn \times \Xn$ is given by 
$
[\Delta_n] = \sum_j (-1)^{|e_j|} \, e_j \otimes e_j^*
$
where $\{e_j^*\}_j$ is the linear basis of $H^*(\Xn)$ dual to $\{e_j\}_j$ in the sense 
that $\langle e_j, e_{j'}^* \rangle = \delta_{j, j'}$.
By the definitions of $W(L, z)$ and $\fG_{k}(\alpha)$, 
$\displaystyle{\Tr \, q^n W(\fL_1, z) \, \prod_{i=1}^N \fG_{k_i}(\alpha_i)}$ is equal to
\begin{eqnarray*} 
& &q^n \sum_j (-1)^{|e_j|} \, \left \langle W(\fL_1, z) 
         \left (\prod_{i=1}^N \fG_{k_i}(\alpha_i) \right ) e_j, e_j^* \right \rangle  \\
&=&q^n \sum_j (-1)^{|e_j|} \, \int_{\Xn \times \Xn} \left (\left (\prod_{i=1}^N \fG_{k_i}(\alpha_i) \right )
             e_j \otimes e_j^* \right ) \, c_{2n}(\mathbb E_{\fL_1})  \\
&=&q^n \sum_j (-1)^{|e_j|} \, \int_{\Xn \times \Xn} \left (\left (\prod_{i=1}^N G_{k_i}(\alpha_i, n) \right )
             e_j \otimes e_j^* \right ) \, c_{2n}(\mathbb E_{\fL_1}) \\
&=&q^n \sum_j (-1)^{|e_j|} \, \int_{\Xn \times \Xn} (e_j \otimes e_j^*) \, p_1^*\left (\prod_{i=1}^N 
              G_{k_i}(\alpha_i, n) \right ) \, c_{2n}(\mathbb E_{\fL_1})  \\
&=&q^n \int_{\Xn \times \Xn} [\Delta_n] \, p_1^*\left (\prod_{i=1}^N 
              G_{k_i}(\alpha_i, n) \right ) \, c_{2n}(\mathbb E_{\fL_1})
\end{eqnarray*}
where $p_1: \Xn \times \Xn \to \Xn$ denotes the first projection.
By \eqref{ResOfEToD}, we have $c_{2n}\big ( \mathbb E_{\fL_1} \big )|_{\Delta_n} 
= c\big (T_\Xn \big )$.
Here and below, we implicitly set $t = 1$ for the generator $t$ of 
the equivariant cohomology $H^*_{\C^*}({\rm pt})$ of a point. Therefore, 
\begin{equation}
\Tr \, q^n  W(\fL_1, z) \, \prod_{i=1}^N \fG_{k_i}(\alpha_i)
= q^n \int_{\Xn} \left (\prod_{i=1}^N 
              G_{k_i}(\alpha_i, n) \right ) \, c\big (T_\Xn \big ).
\tag*{$\qed$}
\end{equation}

\section{\bf The trace $\displaystyle{\Tr \, q^\fd \, W(\fL_1, z) \, 
\prod_{i=1}^N {\fa_{\la^{(i)}}(\alpha_i) \over \la^{(i)}!}}$ and 
the series $F^{\alpha_1, \ldots, \alpha_N}_{k_1, \ldots, k_N}(q)$} 
\label{sect_trace}

In this section, we will first determine the structure of 
$\displaystyle{\Tr \, q^\fd \, W(\fL_1, z) \, 
\prod_{i=1}^N {\fa_{\la^{(i)}}(\alpha_i) \over \la^{(i)}!}}$. 
Then, the structure of the generating series 
$F^{\alpha_1, \ldots, \alpha_N}_{k_1, \ldots, k_N}(q)$ will
follow from Lemma~\ref{FtoW}, Theorem~\ref{char_th} and the structure of
$\displaystyle{\Tr \, q^\fd \, W(\fL_1, z) \, 
\prod_{i=1}^N {\fa_{\la^{(i)}}(\alpha_i) \over \la^{(i)}!}}$. 

We begin with four technical lemmas. To explain the ideas behind these lemmas, 
note from \eqref{WLz} that $\displaystyle{\Tr \, q^\fd \, W(\fL_1, z) \, 
\prod_{i=1}^N {\fa_{\la^{(i)}}(\alpha_i) \over \la^{(i)}!}}$ is equal to
\begin{eqnarray}    \label{sect_traceIdeas}
\Tr \, q^\fd \, \Gamma_-(\fL_1-K_X, z) \, \Gamma_+(-\fL_1, z) \, 
     \prod_{i=1}^N {\fa_{\la^{(i)}}(\alpha_i) \over \la^{(i)}!}.
\end{eqnarray}
Lemma~\ref{CommJL-} deals with the commutator between 
$\displaystyle{{\fa_\la(\alpha) \over \la!}}$ and 
$\displaystyle{\exp \left ( {z^n \over n} \fa_{-n}(\gamma) \right )}$.
It enables us in Lemma~\ref{LemmaTrace} 
to eliminate $\Gamma_-(\fL_1-K_X, z)$ from \eqref{sect_traceIdeas},
and allows us in Lemma~\ref{20150806957} to eliminate $\Gamma_+(-\fL_1, z)$ 
from \eqref{sect_traceIdeas}. Lemma~\ref{Trqdjpx} determines the structure 
of $\displaystyle{\Tr \, q^\fd \,\prod_{i=1}^N 
{\fa_{\la^{(i)}}(\alpha_i) \over \la^{(i)}!}}$.
The proofs of these lemmas are standard but lengthy.

Recall from Definition~\ref{partition}~(ii) that $\Wcp$ denotes
the set of generalized partitions. 
If $\lambda = \big (\cdots (-2)^{s_{-2}}(-1)^{s_{-1}} 1^{s_1}2^{s_2} \cdots \big )$
and $\mu = \big (\cdots (-2)^{t_{-2}}(-1)^{t_{-1}} 1^{t_1}2^{t_2} \cdots \big )$, let
$$
\la - \mu 
= \big (\cdots (-2)^{s_{-2} - t_{-2}}(-1)^{s_{-1}-t_{-1}} 1^{s_1-t_1}2^{s_2-t_2} \cdots \big )
$$
with the convention that $\la - \mu = \emptyset$ if $s_i < t_i$ for some $i$.

\begin{lemma}  \label{CommJL-}
Let $\la \in \Wcp$. Assume that $\gamma \in H^{\rm even}(X)$. Then, 
\begin{eqnarray}
   {\fa_\la(\alpha) \over \la!} \exp \left ( {z^n \over n} \fa_{-n}(\gamma) \right )
&=&\exp \left ( {z^n \over n} \fa_{-n}(\gamma) \right ) \cdot \sum_{i \ge 0} {(-z^n)^i \over i!} 
   {\fa_{\la - (n^i)}(\gamma^i\alpha) \over ({\la - (n^i)})!},   \label{CommJL-.01}  \\
   \exp \left ( {z^n \over n} \fa_{n}(\gamma) \right ) \cdot {\fa_\la(\alpha) \over \la!} 
&=&\sum_{i \ge 0} {(-z^n)^i \over i!} 
   {\fa_{\la - ((-n)^i)}(\gamma^i\alpha) \over ({\la - ((-n)^i)})!}
   \exp \left ( {z^n \over n} \fa_{n}(\gamma) \right ).   \label{CommJL-.02}
\end{eqnarray}
\end{lemma}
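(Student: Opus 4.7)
The plan is to establish both identities by an iterated Baker--Campbell--Hausdorff manipulation, exploiting the fact that each commutator $[\fa_{-n}(\gamma),\fa_m(\beta)]$ is a scalar (by Theorem~\ref{commutator}) and vanishes unless $m = n$. I will describe the argument for \eqref{CommJL-.01} in detail; the derivation of \eqref{CommJL-.02} is entirely parallel, after swapping the roles of the positive and negative parts of $\la$ and replacing $\fa_{-n}(\gamma)$ by $\fa_n(\gamma)$.

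Set $A = (z^n/n)\fa_{-n}(\gamma)$. The first step is the single-operator identity: since $[A,\fa_n(\beta)] = z^n\langle\gamma,\beta\rangle \cdot {\rm Id}$ is central, a routine manipulation gives $\fa_n(\beta)\,e^A = e^A\bigl(\fa_n(\beta) - z^n\langle\gamma,\beta\rangle\bigr)$. Next, I would fix a K\"unneth decomposition $\tau_{\ell(\la)*}\alpha = \sum_j \alpha_{j,1}\otimes\cdots\otimes\alpha_{j,\ell(\la)}$ and expand $\fa_\la(\alpha)$ as a sum of ordered Heisenberg monomials in which the $\fa_n$-operators occupy a single contiguous block of length $m_n$. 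Since all other $\fa_m$ (with $m \ne n$) commute with $e^A$ and pass through untouched, only this block interacts, and iterating the single-operator identity yields
\begin{eqnarray*}
\fa_n(\beta_1)\cdots \fa_n(\beta_{m_n})\,e^A = e^A \prod_{s=1}^{m_n}\bigl(\fa_n(\beta_s) - z^n\langle\gamma,\beta_s\rangle\bigr).
\end{eqnarray*}

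Expanding the product on the right indexes a sum over subsets $S\subset\{1,\ldots,m_n\}$ of size $i$, in which the factors labeled by $S$ are contracted with $\gamma$. Summing over the K\"unneth index $j$ and applying the projection-formula identity $\tau_{k*}(\alpha)\cdot(1^{\otimes(k-i)}\otimes \gamma^{\otimes i}) = \tau_{k*}(\gamma^i\alpha)$ followed by integration along the contracted factors, the symmetry of $\tau_{k*}$ forces all $\binom{m_n}{i}$ choices of $S$ to contribute the same reorganized term $\fa_{\la - (n^i)}(\gamma^i\alpha)$. Dividing by $\la!$ and using the elementary identity $\binom{m_n}{i}/\la! = 1/(i!\,(\la - (n^i))!)$ then produces exactly the right-hand side of \eqref{CommJL-.01}; for $i > m_n$ the monomial vanishes by convention, so the sum may be extended to all $i \ge 0$.

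The main obstacle is the combinatorial K\"unneth bookkeeping in the last step: one must confirm that every choice of $i$ contracted positions inside the $\fa_n$-block reorganizes to the same normalized monomial $\fa_{\la - (n^i)}(\gamma^i\alpha)$, and then match the binomial count against the factorials $\la!$ and $(\la - (n^i))!$. The hypothesis $\gamma \in H^{\rm even}(X)$ is used here to ensure that moving $\gamma$-factors past the other K\"unneth components of $\alpha$ and past the surviving $\fa$-operators produces no Koszul signs, so the claimed identities hold without sign corrections.
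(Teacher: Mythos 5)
Your proof is correct, and it arrives at the same binomial coefficients ${m_n \choose i}/\la! = 1/\bigl(i!\,(\la-(n^i))!\bigr)$ as the paper, but it is organized differently. The paper never decomposes $\fa_\la(\alpha)$ into individual Heisenberg factors: it expands $\exp\bigl(\tfrac{z^n}{n}\fa_{-n}(\gamma)\bigr)$ as a power series, applies the standard identity $X B^t = \sum_i {t \choose i} B^{t-i}\,[\cdots[X,B],\ldots,B]$ to $X=\fa_\la(\alpha)$ and $B=\fa_{-n}(\gamma)$, and then evaluates the $i$-fold iterated commutator in one stroke via Lemma~\ref{tau_k_tau_{k-1}}~(i) as $s_n(s_n-1)\cdots(s_n+1-i)\,(-n)^i\,\fa_{\la-(n^i)}(\gamma^i\alpha)$; the K\"unneth/projection-formula bookkeeping that you carry out by hand (the contiguous $\fa_n$-block, the sum over subsets $S$, and the symmetry argument showing all ${m_n \choose i}$ contractions give the same normalized monomial) is exactly the content already packaged in that quoted lemma. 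Your route is therefore more self-contained -- you essentially re-derive the relevant special case of Lemma~\ref{tau_k_tau_{k-1}}~(i) from the projection formula -- at the cost of extra combinatorial care, and your observation about the role of $\gamma\in H^{\rm even}(X)$ in suppressing Koszul signs is the right one. The other genuine difference is in the second identity: the paper obtains \eqref{CommJL-.02} from \eqref{CommJL-.01} by taking adjoints (using that the adjoint of $\fa_m(\beta)$ is $(-1)^m\fa_{-m}(\beta)$), whereas you rerun the parallel argument with $\fa_n(\gamma)$ acting from the left; both are fine, and yours avoids the sign adjustments that the adjoint route requires.
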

\begin{proof}
Note that the adjoint of $\fa_m(\beta)$ is equal to $(-1)^m \fa_{-m}(\beta)$.
So \eqref{CommJL-.02} follows from \eqref{CommJL-.01} by taking adjoint on both sides of 
\eqref{CommJL-.01} and by making suitable adjustments.
To prove \eqref{CommJL-.01}, put $A = \displaystyle{{\fa_\la(\alpha) \over \la!}
\exp \left ( {z^n \over n} \fa_{-n}(\gamma) \right )}$. Then,
\begin{eqnarray*}   
   A
&=&{\fa_\la(\alpha) \over \la!}
   \sum_{t \ge 0} {1 \over t!} \left ({z^n \over n} \fa_{-n}(\gamma) \right )^t   \\
&=&{1 \over \la!} \sum_{t \ge 0} {1 \over t!} \left ({z^n \over n} \right )^t
   \sum_{i=0}^t {t \choose i} 
   \big ( \fa_{-n}(\gamma) \big )^{t-i}   \big [\cdots [ \fa_\la(\alpha), \underbrace{\fa_{-n}(\gamma)], 
   \ldots, \fa_{-n}(\gamma)}_{i \,\, {\rm times}} \big ].
\end{eqnarray*}
Let $\lambda = \big (\cdots (-2)^{s_{-2}}(-1)^{s_{-1}} 1^{s_1}2^{s_2} \cdots \big )$.
We conclude from Lemma~\ref{tau_k_tau_{k-1}}~(i) that the commutator
$\big [\cdots [ \fa_\la(\alpha), \underbrace{\fa_{-n}(\gamma)], 
   \ldots, \fa_{-n}(\gamma)}_{i \,\, {\rm times}} \big ]$ is equal to
$$
s_n(s_n-1)\cdots (s_n+1-i) \, (-n)^i \, \fa_{\la - (n^i)}(\gamma^i\alpha)
$$
where by our convention, $\la - (n^i) = \emptyset$ if $s_n < i$. So $A$ is equal to
\begin{eqnarray*}   
& &{1 \over \la!} \sum_{t \ge 0} {1 \over t!} \left ({z^n \over n} \right )^t
   \sum_{i=0}^t {t \choose i} 
   \big ( \fa_{-n}(\gamma) \big )^{t-i}  \cdot     \\
& &\quad \quad \cdot s_n(s_n-1)\cdots (s_n+1-i) \, (-n)^i \, \fa_{\la - (n^i)}(\gamma^i\alpha).
\end{eqnarray*}
Simplifying this, we complete the proof of our formula \eqref{CommJL-.01}.
\end{proof}

Let $\Wcp_+ = \cp$ be the subset of $\Wcp$ consisting of the usual partitions,
and $\Wcp_-$ be the subset of $\Wcp$ consisting of generalized partitions
of the form $\big (\cdots (-2)^{s_{-2}}(-1)^{s_{-1}} \big )$.

\begin{lemma}  \label{LemmaTrace}
Let $\la^{(1)}, \ldots, \la^{(N)} \in \Wcp$ be generalized partitions, 
and $\alpha_1, \ldots, \alpha_N \in H^*(X)$. 
Then, the trace $\displaystyle{\Tr \, q^\fd \, W(\fL_1, z) \, 
\prod_{i=1}^N {\fa_{\la^{(i)}}(\alpha_i) \over \la^{(i)}!}}$ is equal to
$$
\sum_{\mu^{(i, s)} \in \Wcp_+ \atop 1 \le i \le N, s \ge 1} \,
\prod_{1 \le i \le N \atop s, n \ge 1} {(-(zq^s)^n)^{m^{(i, s)}_{n}} \over m^{(i, s)}_{n}!} \cdot 
$$
\begin{eqnarray}    \label{LemmaTrace.0}
\cdot \Tr \, q^\fd \, \Gamma_+(-\fL_1, z) \prod_{i=1}^N 
{\fa_{\la^{(i)}-\sum_{s \ge 1} \mu^{(i, s)}}
   \big ((1_X - K_X)^{\sum_{s, n \ge 1} m^{(i, s)}_{n}}\alpha_i \big ) 
   \over \big ( \la^{(i)}-\sum_{s \ge 1} \mu^{(i, s)} \big )!}.
\end{eqnarray}
where $\mu^{(i, s)} = \big (1^{m^{(i, s)}_{1}}\cdots n^{m^{(i, s)}_{n}}\cdots \big )
\in \Wcp_+$ for $1 \le i \le N$ and $s \ge 1$.
\end{lemma}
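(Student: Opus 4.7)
The plan is to use the Carlsson--Okounkov factorization $W(\fL_1, z) = \Gamma_-(1_X - K_X, z)\,\Gamma_+(-\fL_1, z)$ from \eqref{WLz}, together with cyclicity of the trace and the conjugation $q^\fd \Gamma_-(\gamma, z) = \Gamma_-(\gamma, qz)\, q^\fd$ (which is immediate from $[\fd, \fa_{-n}] = n\,\fa_{-n}$), in order to iteratively commute the creation factor $\Gamma_-(1_X - K_X, z)$ around the cycle of the trace. Set $\gamma = 1_X - K_X$ and $Y = \prod_{i=1}^N \fa_{\la^{(i)}}(\alpha_i)/\la^{(i)}!$. The starting observation that keeps the final answer finite is that the pairing $\langle c_1(\fL_1), \gamma\rangle$ vanishes for degree reasons: $c_1(\fL_1)$ is represented by $1_X$ after setting $t=1$, and $\gamma$ has no $H^4(X)$-component. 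By \eqref{CarLemma5.2} this implies that $\Gamma_+(-\fL_1, z)$ and $\Gamma_-(\gamma, w)$ commute for every formal parameter $w$, with no $(1-q^s)$-prefactor arising from the swap.

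A single iteration step runs as follows. Cyclicity and the conjugation identity give
\[
\Tr\, q^\fd\, \Gamma_-(\gamma, z)\, \Gamma_+(-\fL_1, z)\, Y \;=\; \Tr\, q^\fd\, \Gamma_+(-\fL_1, z)\, Y\, \Gamma_-(\gamma, qz).
\]
Now apply \eqref{CommJL-.01} one exponential factor $\exp\!\bigl(\tfrac{(qz)^n}{n}\fa_{-n}(\gamma)\bigr)$ at a time, sweeping across each factor of $Y$; this is legal because these exponentials commute among themselves. The effect is to pull $\Gamma_-(\gamma, qz)$ back to the left of $Y$ at the cost of a sum over partitions $\mu^{(i, 1)} = (1^{m^{(i,1)}_1} 2^{m^{(i,1)}_2}\cdots)\in \Wcp_+$, with coefficient $\prod_{i, n}(-(qz)^n)^{m^{(i,1)}_n}/m^{(i,1)}_n!$, and with each $\fa_{\la^{(i)}}(\alpha_i)/\la^{(i)}!$ replaced by $\fa_{\la^{(i)}-\mu^{(i,1)}}(\gamma^{\ell(\mu^{(i,1)})}\alpha_i)/(\la^{(i)}-\mu^{(i,1)})!$. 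Finally, swap $\Gamma_-(\gamma, qz)$ past $\Gamma_+(-\fL_1, z)$ for free; the trace now has exactly the same shape as the one we started with, except that $z$ has been replaced by $qz$ and $Y$ by the modified product.

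Iterating the procedure with $z$ replaced successively by $qz, q^2 z, \ldots, q^S z$ builds up families $\mu^{(i, s)}$ for $s = 1, \ldots, S$, accumulates the coefficient $\prod_{i, s, n}(-(zq^s)^n)^{m^{(i,s)}_n}/m^{(i,s)}_n!$, replaces $Y$ by $\prod_i \fa_{\la^{(i)} - \sum_{s=1}^S \mu^{(i,s)}}\bigl(\gamma^{\sum_{s, n} m^{(i,s)}_n}\alpha_i\bigr)/(\la^{(i)} - \sum_{s=1}^S \mu^{(i,s)})!$, and leaves a residual factor $\Gamma_-(\gamma, q^S z)$. In the formal power series sense in $q$, the limit $S \to \infty$ is harmless: since $\Gamma_-(\gamma, q^S z) = 1 + O(q^S)$, the residual contributes nothing to any fixed power of $q$ once $S$ is large, and the finiteness of the positive parts of each $\la^{(i)}$ forces only finitely many $\mu^{(i, s)}$ to be nonempty in every surviving summand. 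This yields \eqref{LemmaTrace.0}. The main obstacle will be the combinatorial bookkeeping of how the exponent on $\gamma$ and the subtraction on $\la^{(i)}$ accumulate across iterations; once this is tracked, the vanishing $\langle c_1(\fL_1),\gamma\rangle = 0$ ensures that no $(1-q^s)$-denominators infect the answer.
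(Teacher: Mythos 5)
Your proposal is correct and follows essentially the same route as the paper's own proof: factor $W(\fL_1,z)$ via \eqref{WLz}, conjugate $\Gamma_-$ through $q^\fd$ and cycle it around the trace, sweep it back across the product with \eqref{CommJL-.01}, use $\langle \fL_1, \fL_1-K_X\rangle=0$ and \eqref{CarLemma5.2} to pass it through $\Gamma_+(-\fL_1,z)$ at no cost, and iterate, letting the number of iterations tend to infinity. The only difference is cosmetic: you justify the limit slightly more explicitly than the paper does.
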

\begin{proof}
For simplicity, put $Q_1 = \displaystyle{\Tr \, q^\fd \, W(\fL_1, z) \, 
\prod_{i=1}^N {\fa_{\la^{(i)}}(\alpha_i) \over \la^{(i)}!}}$. By \eqref{WLz},
\begin{eqnarray}
   Q_1
&=&\Tr \, q^\fd \, \Gamma_-(\fL_1-K_X, z) \, \Gamma_+(-\fL_1, z) \, 
     \prod_{i=1}^N {\fa_{\la^{(i)}}(\alpha_i) \over \la^{(i)}!}    \nonumber     \\
&=&\Tr \, \Gamma_-(\fL_1-K_X, zq) \, q^\fd \, \Gamma_+(-\fL_1, z) \, 
     \prod_{i=1}^N {\fa_{\la^{(i)}}(\alpha_i) \over \la^{(i)}!}    \label{Repeat.1}     \\
&=&\Tr \, q^\fd \, \Gamma_+(-\fL_1, z) \prod_{i=1}^N {\fa_{\la^{(i)}}(\alpha_i) \over \la^{(i)}!}
     \cdot \Gamma_-(\fL_1-K_X, zq).                     \nonumber  
\end{eqnarray}
By \eqref{GammaLz} and applying \eqref{CommJL-.01} repeatedly, we obtain
\begin{eqnarray*}
& &\prod_{i=1}^N {\fa_{\la^{(i)}}(\alpha_i) \over \la^{(i)}!} \cdot \Gamma_-(\fL_1-K_X, zq)     \\
&=&\prod_{i=1}^N {\fa_{\la^{(i)}}(\alpha_i) \over \la^{(i)}!} \cdot 
   \exp \left ( \sum_{n>0} {(zq)^{n} \over n} \fa_{-n}(\fL_1-K_X) \right )       \\
&=&\Gamma_-(\fL_1-K_X, zq) \sum_{\mu^{(i, 1)} \in \Wcp_+ \atop 1 \le i \le N}
   \prod_{1 \le i \le N \atop n \ge 1} {(-(zq)^n)^{m_{n}^{(i,1)}} \over m_{n}^{(i,1)}!} \cdot      \\
& &\quad \quad \cdot \prod_{i=1}^N {\fa_{\la^{(i)} - \mu^{(i, 1)}}
   \big ((1_X - K_X)^{\sum_{n \ge 1}m^{(i, 1)}_{n}}\alpha_i \big ) 
   \over \big ( \la^{(i)} - \mu^{(i, 1)} \big )!}.  
\end{eqnarray*}
where $\mu^{(i, 1)} = \big (1^{m^{(i, 1)}_{1}}\cdots n^{m^{(i, 1)}_{n}}\cdots \big )$.
Therefore, $Q_1$ is equal to
$$
\Tr \, q^\fd \, \Gamma_+(-\fL_1, z)  \Gamma_-(\fL_1-K_X, zq)  
\sum_{\mu^{(i, 1)} \in \Wcp_+ \atop 1 \le i \le N}
   \prod_{1 \le i \le N \atop n \ge 1} {(-(zq)^n)^{m_{n}^{(i,1)}} \over m_{n}^{(i,1)}!} \cdot 
$$
$$
\cdot \prod_{i=1}^N {\fa_{\la^{(i)} - \mu^{(i, 1)}}
   \big ((1_X - K_X)^{\sum_{n \ge 1}m^{(i, 1)}_{n}}\alpha_i \big ) 
   \over \big ( \la^{(i)} - \mu^{(i, 1)} \big )!}.
$$
Since $\langle \fL_1, \fL_1-K_X \rangle = 0$, we see from \eqref{CarLemma5.2} that 
$Q_1$ is equal to
$$
\Tr \, q^\fd \, \Gamma_-(\fL_1-K_X, zq) \Gamma_+(-\fL_1, z) \cdot
\sum_{\mu^{(i, 1)} \in \Wcp_+ \atop 1 \le i \le N}
   \prod_{1 \le i \le N \atop n \ge 1} {(-(zq)^n)^{m_{n}^{(i,1)}} \over m_{n}^{(i,1)}!} \cdot 
$$
$$
\cdot \prod_{i=1}^N {\fa_{\la^{(i)} - \mu^{(i, 1)}}
   \big ((1_X - K_X)^{\sum_{n \ge 1}m^{(i, 1)}_{n}}\alpha_i \big ) 
   \over \big ( \la^{(i)} - \mu^{(i, 1)} \big )!}.
$$
Repeat the above process beginning at line \eqref{Repeat.1} $s$ times. Then, 
$Q_1$ is equals  to
$$
\Tr \, q^\fd \, \Gamma_-(\fL_1-K_X, zq^s) \Gamma_+(-\fL_1, z) \cdot
\sum_{\mu^{(i, r)} \in \Wcp_+ \atop 1 \le i \le N, 1 \le r \le s}
   \prod_{1 \le i \le N \atop 1 \le r \le s, n \ge 1} 
   {(-(zq^r)^n)^{m_{n}^{(i,r)}} \over m_{n}^{(i,r)}!} \cdot 
$$
$$
\cdot \prod_{i=1}^N {\fa_{\la^{(i)} - \sum_{r=1}^s \mu^{(i, r)}}
   \big ((1_X - K_X)^{\sum_{r=1}^s \sum_{n \ge 1}m^{(i, r)}_{n}}\alpha_i \big ) 
   \over \big ( \la^{(i)} - \sum_{r=1}^s \mu^{(i, r)} \big )!}
$$
where $\mu^{(i, r)} = \big (1^{m^{(i, r)}_{1}}\cdots n^{m^{(i, r)}_{n}}\cdots \big )$.
Letting $s \to +\infty$ proves our lemma.
\end{proof}

\begin{lemma}  \label{20150806957}
Let $\w \la^{(1)}, \ldots, \w \la^{(N)} \in \Wcp$ be generalized partitions, 
and $\w \alpha_1, \ldots, \w \alpha_N \in H^*(X)$. 
Then, the trace $\displaystyle{\Tr \, q^\fd \, \Gamma_+(-\fL_1, z) 
\prod_{i=1}^N {\fa_{\w \la^{(i)}}(\w \alpha_i) \over \w \la^{(i)}!}}$ is equal to
\begin{eqnarray*}
\sum_{\sum_{i=1}^N(|\w \la^{(i)}| - \sum_{t \ge 1} |\w \mu^{(i, t)}|) = 0} 
     \prod_{1 \le i \le N \atop t, n \ge 1} {(z^{-1}q^{t-1})^{n \w m^{(i,t)}_n} 
     \over {\w m^{(i,t)}_n}!} 
\cdot     \Tr \, q^\fd \prod_{i=1}^N 
     {\fa_{\w \la^{(i)} - \sum_{t \ge 1} \w \mu^{(i, t)}}(\w \alpha_i) \over
     \big (\w \la^{(i)} - \sum_{t \ge 1} \w \mu^{(i, t)} \big )!}
\end{eqnarray*}
where $\w \mu^{(i, t)} = \big (\cdots (-n)^{\w m^{(i, t)}_{n}}\cdots (-1)^{\w m^{(i, t)}_{1}}\big )
\in \Wcp_-$ for $1 \le i \le N$ and $t \ge 1$.
\end{lemma}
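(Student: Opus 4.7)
The plan is to mimic the argument used to prove Lemma~\ref{LemmaTrace}, but now with the roles of $\Gamma_-(\fL_1-K_X,z)$ and $\Gamma_+(-\fL_1,z)$ interchanged. Two ingredients are required. First, a commutation formula for sliding $\Gamma_+(-\fL_1,w)$ through a product of Heisenberg monomials: starting from \eqref{CommJL-.02} with $\gamma=-\fL_1$ and exploiting $\fL_1=1_X$ (so that $(-\fL_1)^k\w\alpha_i=(-1)^k\w\alpha_i$, which cancels the sign $(-1)^k$ in $(-w^{-n})^k$), then iterating over $n\ge 1$ (the factors commute) and over $i=1,\dots,N$, one obtains
\begin{equation*}
\Gamma_+(-\fL_1,w)\prod_{i=1}^N\frac{\fa_{\w\la^{(i)}}(\w\alpha_i)}{\w\la^{(i)}!}
=\sum_{\w\mu^{(i)}\in\Wcp_-}\prod_{i,n}\frac{w^{-n\w m_n^{(i)}}}{\w m_n^{(i)}!}\prod_{i=1}^N\frac{\fa_{\w\la^{(i)}-\w\mu^{(i)}}(\w\alpha_i)}{(\w\la^{(i)}-\w\mu^{(i)})!}\,\Gamma_+(-\fL_1,w).
\end{equation*}
Second, the conjugation relation $\Gamma_+(L,z)\,q^\fd=q^\fd\,\Gamma_+(L,zq^{-1})$, derived from $[\fd,\fa_n(L)]=-n\,\fa_n(L)$, in exact analogy with the relation $q^\fd\Gamma_-(L,z)=\Gamma_-(L,zq)q^\fd$ used to pass from the first to the second line in the proof of Lemma~\ref{LemmaTrace}.

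The bulk of the argument is then an iteration parallel to the one beginning at line~\eqref{Repeat.1}. Starting with $w=z$, apply the commutation formula inside the trace, use cyclicity to bring $\Gamma_+(-\fL_1,z)$ back to the front, and then absorb it through the leading $q^\fd$ to produce $q^\fd\,\Gamma_+(-\fL_1,zq^{-1})$ at the front, now multiplying a modified product of Heisenberg monomials. Repeating with $w=zq^{-(t-1)}$ for $t=1,2,\dots$ accumulates at the $t$-th step the coefficient $(zq^{-(t-1)})^{-n\w m_n^{(i,t)}}=(z^{-1}q^{t-1})^{n\w m_n^{(i,t)}}$, which is exactly the factor appearing in the target formula.

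To finish, I would let the number of iterations $T\to\infty$. Since $|q|<1$, the residual $\Gamma_+(-\fL_1,zq^{-T})$ tends to the identity on each graded piece of $\fock$: the coefficients $(zq^{-T})^{-n}=z^{-n}q^{nT}$ vanish, and on any fixed grade only finitely many terms in the exponential expansion can contribute. The constraint $\sum_i\bigl(|\w\la^{(i)}|-\sum_t|\w\mu^{(i,t)}|\bigr)=0$ appears automatically, because $\Tr\,q^\fd M$ vanishes unless $M$ preserves the $\fd$-grading on $\fock$. The main obstacle, as in Lemma~\ref{LemmaTrace}, is the careful bookkeeping of the iterated commutations; however, since $\w\mu^{(i,t)}\in\Wcp_-$ and the convention $\la-\mu=\emptyset$ kills any term in which the subtraction exceeds the parts of $\w\la^{(i)}$, only finitely many $\w\mu^{(i,t)}$ contribute for a fixed collection of $\w\la^{(i)}$, and the limit is well-defined.
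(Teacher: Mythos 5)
Your proof is correct and follows essentially the same route as the paper's: repeated application of \eqref{CommJL-.02}, cyclicity of the trace, the grading argument to isolate the surviving terms, and a limit over the number of iterations. The only difference is bookkeeping — you absorb the accumulated $q$-powers into a rescaling $z\mapsto zq^{-t}$ of the argument of $\Gamma_+$, whereas the paper keeps the argument fixed at $z$ and tracks the powers $q^{\sum_i(\sum_{\ell\le r}|\w\mu^{(i,\ell)}|-|\w\la^{(i)}|)}$ explicitly; the two are equivalent, and your observation that $\Gamma_+(-\fL_1,zq^{-T})\to\operatorname{Id}$ is a clean restatement of the paper's estimate $U(t)\to 0$.
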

\begin{proof}
For simplicity, put $Q_2 = \displaystyle{\Tr \, q^\fd \, \Gamma_+(-\fL_1, z) 
\prod_{i=1}^N {\fa_{\w \la^{(i)}}(\w \alpha_i) \over \w \la^{(i)}!}}$. By \eqref{GammaLz},
$$
Q_2 = \Tr \, q^\fd \, \exp \left ( \sum_{n>0} {z^{-n} \over n} \fa_n(-\fL_1) \right )  \, 
\prod_{i=1}^N {\fa_{\w \la^{(i)}}(\w \alpha_i) \over \w \la^{(i)}!}.
$$
Applying \eqref{CommJL-.02} repeatedly, we see that $Q_2$ is equal to
$$
\sum_{\w \mu^{(i, 1)} \in \Wcp_- \atop 1 \le i \le N} 
     \prod_{1 \le i \le N \atop n \ge 1} {z^{-n \w m^{(i,1)}_n} \over {\w m^{(i,1)}_n}!} 
     \cdot \Tr \, q^\fd \prod_{i=1}^N 
     {\fa_{\w \la^{(i)} - \w \mu^{(i, 1)}}(\w \alpha_i) \over
     \big (\w \la^{(i)} - \w \mu^{(i, 1)} \big )!} \cdot
     \Gamma_+(-\fL_1, z) 
$$
where $\w \mu^{(i, 1)} = \big (\cdots (-n)^{\w m^{(i,1)}_n} \cdots (-1)^{\w m^{(i,1)}_1} \big )
\in \Wcp_-$. Now $Q_2$ is equal to
\begin{eqnarray*}
& &\sum_{\w \mu^{(i, 1)} \in \Wcp_- \atop 1 \le i \le N} 
     \prod_{1 \le i \le N \atop n \ge 1} {z^{-n \w m^{(i,1)}_n} \over {\w m^{(i,1)}_n}!} 
     \cdot q^{\sum_{i=1}^N(|\w \mu^{(i, 1)}| - |\w \la^{(i)}|)} 
     \Tr \, \prod_{i=1}^N 
     {\fa_{\w \la^{(i)} - \w \mu^{(i, 1)}}(\w \alpha_i) \over
     \big (\w \la^{(i)} - \w \mu^{(i, 1)} \big )!} \cdot
     q^\fd \Gamma_+(-\fL_1, z) \\
&=&\sum_{\w \mu^{(i, 1)} \in \Wcp_- \atop 1 \le i \le N} 
     \prod_{1 \le i \le N \atop n \ge 1} {z^{-n \w m^{(i,1)}_n} \over {\w m^{(i,1)}_n}!} 
     \cdot q^{\sum_{i=1}^N(|\w \mu^{(i, 1)}| - |\w \la^{(i)}|)} 
     \Tr \, q^\fd \Gamma_+(-\fL_1, z) \prod_{i=1}^N 
     {\fa_{\w \la^{(i)} - \w \mu^{(i, 1)}}(\w \alpha_i) \over
     \big (\w \la^{(i)} - \w \mu^{(i, 1)} \big )!}.
\end{eqnarray*}
By degree reason, $\Tr \, q^\fd \Gamma_+(-\fL_1, z) \fa_{\mu}(\beta) = 0$ if $|\mu| > 0$.
If $|\mu| = 0$, then we have $\Tr \, q^\fd \Gamma_+(-\fL_1, z) \fa_{\mu}(\beta) = 
\Tr \, q^\fd \fa_{\mu}(\beta)$. So $Q_2$ is equal to
\begin{eqnarray*}
\sum_{\sum_{i=1}^N(|\w \la^{(i)}| - |\w \mu^{(i, 1)}|) < 0} 
     \prod_{1 \le i \le N \atop n \ge 1} {z^{-n \w m^{(i,1)}_n} \over {\w m^{(i,1)}_n}!} 
     \cdot q^{\sum_{i=1}^N(|\w \mu^{(i, 1)}| - |\w \la^{(i)}|)} 
     \Tr \, q^\fd \Gamma_+(-\fL_1, z) \prod_{i=1}^N 
     {\fa_{\w \la^{(i)} - \w \mu^{(i, 1)}}(\w \alpha_i) \over
     \big (\w \la^{(i)} - \w \mu^{(i, 1)} \big )!}
\end{eqnarray*}
\begin{eqnarray*}
+ \sum_{\sum_{i=1}^N(|\w \la^{(i)}| - |\w \mu^{(i, 1)}|) = 0} 
     \prod_{1 \le i \le N \atop n \ge 1} {z^{-n \w m^{(i,1)}_n} \over {\w m^{(i,1)}_n}!} 
     \cdot \Tr \, q^\fd \prod_{i=1}^N 
     {\fa_{\w \la^{(i)} - \w \mu^{(i, 1)}}(\w \alpha_i) \over
     \big (\w \la^{(i)} - \w \mu^{(i, 1)} \big )!}.
\end{eqnarray*}

Repeating the process in the previous paragraph $t$ times, we conclude that 
$$
Q_2 = U(t) - V(t)
$$
where $U(t)$ is given by
\begin{eqnarray}    \label{Ut1}
\sum_{\sum_{i=1}^N(|\w \la^{(i)}| - \sum_{r=1}^t|\w \mu^{(i, r)}|) < 0} 
     \prod_{r=1}^t \left (\prod_{1 \le i \le N \atop n \ge 1} {z^{-n \w m^{(i,r)}_n} 
     \over {\w m^{(i,r)}_n}!} \cdot 
     q^{\sum_{i=1}^N(\sum_{\ell =1}^r |\w \mu^{(i, \ell)}| - |\w \la^{(i)}|)} \right ) \cdot
\end{eqnarray}
\begin{eqnarray}   \label{Ut2}
\cdot     \Tr \, q^\fd \Gamma_+(-\fL_1, z) \prod_{i=1}^N 
     {\fa_{\w \la^{(i)} - \sum_{r=1}^t \w \mu^{(i, r)}}(\w \alpha_i) \over
     \big (\w \la^{(i)} - \sum_{r=1}^t \w \mu^{(i, r)} \big )!}
\end{eqnarray}
with $\w \mu^{(i, r)} = \big (\cdots (-n)^{\w m^{(i,r)}_n} \cdots (-1)^{\w m^{(i,r)}_1} \big )
\in \Wcp_-$, and $V(t)$ is given by
\begin{eqnarray*}
\sum_{\sum_{i=1}^N(|\w \la^{(i)}| - \sum_{r=1}^t|\w \mu^{(i, r)}|) = 0} 
     \prod_{r=1}^t \left (\prod_{1 \le i \le N \atop n \ge 1} {z^{-n \w m^{(i,r)}_n} 
     \over {\w m^{(i,r)}_n}!} \cdot 
     q^{\sum_{i=1}^N(\sum_{\ell =1}^r |\w \mu^{(i, \ell)}| - |\w \la^{(i)}|)} \right ) \cdot
\end{eqnarray*}
\begin{eqnarray*}
\cdot     \Tr \, q^\fd \prod_{i=1}^N 
     {\fa_{\w \la^{(i)} - \sum_{r=1}^t \w \mu^{(i, r)}}(\w \alpha_i) \over
     \big (\w \la^{(i)} - \sum_{r=1}^t \w \mu^{(i, r)} \big )!}.
\end{eqnarray*}
Denote line \eqref{Ut1} by $\widetilde U(t)$.
Since $\sum_{i=1}^N(|\w \la^{(i)}| - \sum_{r=1}^t|\w \mu^{(i, r)}|) < 0$
and $|\w \mu^{(i, r)}| < 0$, $\widetilde U(t)$ is a polynomial in $q$ with coefficients 
being bounded in terms of $-\sum_{i=1}^N |\w \la^{(i)}|$. Moreover, $q^t|\widetilde U(t)$.
Line \eqref{Ut2} is bounded in terms of the generalized partitions $\w \la^{(i)}$.
Since $0 < q < 1$, $U(t) \to 0$ as $t \to +\infty$.
Letting $t \to +\infty$, we see that $Q_2$ equals
\begin{eqnarray*}
\sum_{\sum_{i=1}^N(|\w \la^{(i)}| - \sum_{t \ge 1} |\w \mu^{(i, t)}|) = 0} 
     \prod_{t \ge 1} \left (\prod_{1 \le i \le N \atop n \ge 1} {z^{-n \w m^{(i,t)}_n} 
     \over {\w m^{(i,t)}_n}!} \cdot 
     q^{\sum_{i=1}^N(\sum_{\ell =1}^t |\w \mu^{(i, \ell)}| - |\w \la^{(i)}|)} \right ) \cdot
\end{eqnarray*}
\begin{eqnarray*}
\cdot     \Tr \, q^\fd \prod_{i=1}^N 
     {\fa_{\w \la^{(i)} - \sum_{t \ge 1} \w \mu^{(i, t)}}(\w \alpha_i) \over
     \big (\w \la^{(i)} - \sum_{t \ge 1} \w \mu^{(i, t)} \big )!}.
\end{eqnarray*}
Replacing $q^{\sum_{i=1}^N(\sum_{\ell =1}^t |\w \mu^{(i, \ell)}| - |\w \la^{(i)}|)}$
by $q^{-\sum_{i=1}^N\sum_{\ell \ge t + 1} |\w \mu^{(i, \ell)}|}$ proves our lemma.
\end{proof}

\begin{lemma}  \label{Trqdjpx}
Let $\la^{(1)}, \ldots, \la^{(N)} \in \Wcp$ be generalized partitions, 
and $\alpha_1, \ldots, \alpha_N \in H^*(X)$ be homogeneous. Then, 
$\displaystyle{\Tr \, q^\fd \,\prod_{i=1}^N {\fa_{\la^{(i)}}(\alpha_i) \over \la^{(i)}!}}$ 
can be computed by an induction on $N$, and is a linear combination of expressions of the form:
\begin{eqnarray}     \label{Trqdjpx.0}
(q; q)_\infty^{-\chi(X)} \cdot {\rm Sign}(\pi) \cdot
\prod_{i=1}^u \left \langle e_X^{m_i}, \prod_{j \in \pi_i} \alpha_j \right \rangle \cdot
\prod_{i=1}^v {n_i^{k_i} q^{n_i} \over 1 - q^{n_i}}
\end{eqnarray}
where $0 \le v \le \sum_{i=1}^N \ell(\la^{(i)})/2$, $m_i \ge 0$, $n_i >0$,
all the integers involved and the partition $\{\pi_1, \ldots, \pi_u\}$ 
of $\{1, \ldots, N\}$ depend only on $\la^{(1)}, \ldots, \la^{(N)}$,  
and ${\rm Sign}(\pi)$ is the sign compensating the formal difference between
$\prod_{i=1}^u \prod_{j \in \pi_i} \alpha_j$ and $\alpha_1 \cdots \alpha_N$.
Moreover, the coefficients of this linear combination are independent of 
$q, \alpha_i, n_i, X$.
\end{lemma}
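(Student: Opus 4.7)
I would proceed by induction on the total length $\Lambda := \sum_{i=1}^N \ell(\la^{(i)})$ of the Heisenberg product, which subsumes the induction on $N$ stated in the lemma. The base case $\Lambda = 0$ forces $N = 0$ and reduces to G\"ottsche's identity $\Tr \, q^\fd = (q; q)_\infty^{-\chi(X)}$, matching the claimed form with $u = v = 0$. As a warm-up for $N = 1$ and $\la^{(1)} = (n, -n)$, a direct cyclic manipulation already gives $\Tr \, q^\fd \, \fa_{-n}(\alpha) \, \fa_n(\beta) = -\frac{n q^n}{1 - q^n} \, \langle \alpha, \beta \rangle \, (q; q)_\infty^{-\chi(X)}$, illustrating the combinatorial template.

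For the inductive step with $\Lambda \ge 1$, I would expand $\fa_{\la^{(1)}}(\alpha_1) = \sum_j \fa_{m_1}(\alpha_{1,j_1}) \cdots \fa_{m_k}(\alpha_{1,j_k})$ under the K\"unneth decomposition of Definition~\ref{partition}(i), isolate the rightmost factor $\fa_{m_k}(\alpha_{1,j_k})$, and apply a Wick-contraction trick. Writing $B$ for the product of all remaining Heisenberg operators, cyclicity of the super-trace combined with the relation $q^\fd \, \fa_m(\gamma) = q^{-m} \, \fa_m(\gamma) \, q^\fd$ yields
\begin{equation*}
\Tr \, q^\fd \, B \, \fa_{m_k}(\gamma) = \pm \, q^{m_k} \, \Tr \, q^\fd \, \fa_{m_k}(\gamma) \, B,
\end{equation*}
and combining this with $B \, \fa_{m_k}(\gamma) - \fa_{m_k}(\gamma) \, B = [B, \fa_{m_k}(\gamma)]$ solves for the original trace as
\begin{equation*}
\Tr \, q^\fd \, \fa_{m_k}(\gamma) \, B = \pm \, \frac{1}{1 - q^{m_k}} \, \Tr \, q^\fd \, [B, \fa_{m_k}(\gamma)].
\end{equation*}
The commutator is evaluated by Lemma~\ref{tau_k_tau_{k-1}}(i), producing a sum over pairings of $\fa_{m_k}(\gamma)$ with some $\fa_{-m_k}$-factor inside $B$; each contraction reduces $\Lambda$ by two and contributes a scalar factor $-m_k \langle \alpha_{1,j_k}, \, \cdot \, \rangle$. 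After dividing by $(1 - q^{m_k})$ and summing over $j$, each resulting trace falls under the inductive hypothesis.

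The commutator expansion bifurcates in a way that accounts for all structural features of the claim. If $\fa_{m_k}$ pairs with a factor from a different block $\fa_{\la^{(i)}}(\alpha_i)$ with $i \ne 1$, then the K\"unneth pairings $\sum_j \langle \alpha_{1,j_k}, (\alpha_i)_\bullet \rangle$ collapse into a cup product $\alpha_1 \alpha_i$, merging blocks $1$ and $i$ inside one $\pi$-part and contributing a factor $-m_k \, q^{m_k}/(1 - q^{m_k})$ to the product $\prod n_i^{k_i} q^{n_i}/(1 - q^{n_i})$. If instead $\fa_{m_k}$ pairs with a factor still inside $\fa_{\la^{(1)}}(\alpha_1)$, then commuting past the intervening factors via Lemma~\ref{tau_k_tau_{k-1}}(ii) produces an $e_X \alpha_1$ term whose subsequent self-pairing supplies one additional power of $e_X$ inside some $\langle e_X^{m_i}, \prod_{j \in \pi_i} \alpha_j \rangle$. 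The bound $v \le \sum \ell(\la^{(i)})/2$ is automatic since each $n_i^{k_i} q^{n_i}/(1 - q^{n_i})$ factor consumes exactly two Heisenberg operators. Because all numerical coefficients arising from cyclicity, Theorem~\ref{commutator}, and Lemma~\ref{tau_k_tau_{k-1}} depend only on the parts $m_i$ of the $\la^{(i)}$'s and not on $q$, $\alpha_i$, or $X$, the independence of the final coefficients follows automatically. The main obstacle I anticipate is meticulous sign bookkeeping: tracking the super-trace signs from odd-degree classes together with the adjacent-transposition signs from Lemma~\ref{tau_k_tau_{k-1}}(ii), so that every contribution is uniformly absorbed into one combinatorial template with ${\rm Sign}(\pi)$ compensating between $\prod_i \prod_{j \in \pi_i} \alpha_j$ and $\alpha_1 \cdots \alpha_N$.
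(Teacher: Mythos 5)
Your argument is correct and is essentially the paper's own proof: both rest on the same cyclicity-plus-$q^{\fd}$-conjugation identity that solves for the trace in terms of $\Tr \, q^{\fd}$ of a commutator, which Lemma~\ref{tau_k_tau_{k-1}} then evaluates so that induction applies, with cross-block contractions merging the $\alpha_i$'s into the $\pi$-parts and within-block contractions supplying the powers of $e_X$. The only difference is bookkeeping: you run a single induction on $\sum_i \ell(\la^{(i)})$ and always cycle one Heisenberg factor (which, since every part is nonzero, never risks dividing by $1-q^0$), whereas the paper inducts on $N$ and splits into the case where all $|\la^{(i)}|=0$ (forcing $\ell(\la^{(i)})=2$ and $|\alpha_i|=0$ by degree reasons before cycling a single factor) and the case where some $|\la^{(i_0)}|\ne 0$ (cycling the whole block $\fa_{\la^{(i_0)}}(\alpha_{i_0})$ to extract the factor $1/(1-q^{-|\la^{(i_0)}|})$) --- a reorganization rather than a different method.
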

\begin{proof}
For simplicity, put $A_N = \displaystyle{\Tr \, q^\fd \,\prod_{i=1}^N {\fa_{\la^{(i)}}(\alpha_i) 
\over \la^{(i)}!}}$. Since $\fa_{\la^{(i)}}(\alpha_i)$ has conformal weight $|\la^{(i)}|$, 
$A_N = 0$ unless $\sum_{i=1}^N |\la^{(i)}| = 0$.
In the rest of the proof, we will assume $\sum_{i=1}^N |\la^{(i)}| = 0$. 
We will divide the proof into two cases.

\bigskip\noindent
{\bf Case 1:} $|\la^{(i)}| = 0$ for every $1 \le i \le N$.
Then, $\ell(\la^{(i)}) \ge 2$ for every $i$.
Since $\fa_{\la^{(i)}}(\alpha_i)$ has 
degree $2(\ell(\la^{(i)}) - 2) + |\alpha_i|$, $A_N = 0$ unless $\ell(\la^{(i)}) = 2$
and $|\alpha_i| = 0$ for all $1 \le i \le N$. Assume that $\ell(\la^{(i)}) = 2$
and $|\alpha_i| = 0$ for all $1 \le i \le N$. Then for every $1 \le i \le N$,
we have $\la^{(i)} = ((-n_i)n_i)$ for some $n_i > 0$. 
We further assume that $n_1 = \ldots = n_{r}$ for some $1 \le r \le N$
and $n_i \ne n_1$ if $r < i \le N$. Let $\alpha_1 = a 1_X$
and $\tau_{2*}1_X = \sum_j (-1)^{|\beta_j|} \beta_{j} \otimes \gamma_{j}$
with $\langle \beta_j, \gamma_{j'} \rangle = \delta_{j, j'}$. 
Then, $A_N$ is equal to 
\begin{eqnarray*}  
& &a \sum_j (-1)^{|\beta_j|} \Tr \, q^\fd \, \fa_{-n_1}(\beta_j) \, \fa_{n_1}(\gamma_j) 
     \prod_{i=2}^N \fa_{\la^{(i)}}(\alpha_i)   \\
&=&aq^{n_1} \sum_j (-1)^{|\beta_j|} \Tr \, \fa_{-n_1}(\beta_j) \, q^\fd \, \fa_{n_1}(\gamma_j) 
     \prod_{i=2}^N \fa_{\la^{(i)}}(\alpha_i)   \\
&=&aq^{n_1} \sum_j \Tr \, q^\fd \, \fa_{n_1}(\gamma_j) 
     \prod_{i=2}^N \fa_{\la^{(i)}}(\alpha_i) \cdot \fa_{-n_1}(\beta_j)  \\
&=&aq^{n_1} \sum_j \Tr \, q^\fd \, \fa_{n_1}(\gamma_j) \fa_{-n_1}(\beta_j)
     \prod_{i=2}^N \fa_{\la^{(i)}}(\alpha_i)   \\
& &\quad + aq^{n_1} \sum_j \sum_{i=2}^r \Tr \, q^\fd \, \fa_{n_1}(\gamma_j) 
     \prod_{k=2}^{i-1} \fa_{\la^{(k)}}(\alpha_k) 
     \cdot [\fa_{\la^{(i)}}(\alpha_i), \fa_{-n_1}(\beta_j)] \cdot    
     \prod_{k=i+1}^{N} \fa_{\la^{(k)}}(\alpha_k).
\end{eqnarray*}
By Lemma~\ref{tau_k_tau_{k-1}}~(i), $A_N$ is equal to the sum of the expressions
\begin{eqnarray}    \label{Trqdjpx.1}
\left \langle e_X, \alpha_1 \prod_{i =1}^{k_1} \alpha_{j_i} \right \rangle 
\cdot {(-n_1)^{k_1} q^{n_1} \over 1-q^{n_1}} \cdot
\Tr \, q^\fd \, \prod_{i \in \{2, \ldots, N\}-\{j_1, \ldots, j_{k_1} \}} \fa_{\la^{(i)}}(\alpha_i)
\end{eqnarray}
where $0 \le k_1 \le r-1$, $\{j_1, \ldots, j_{k_1} \} \subset \{2, \ldots, r \}$, 
every factor in $(-n_1)^{k_1}$ comes from a commutator of type 
$[\fa_{n_1}(\cdot), \fa_{-n_1}(\cdot)]$, and the coefficients 
of this linear combination depend only on $\la^{(1)}, \ldots, \la^{(N)}$. 
In particular, we have 
\begin{eqnarray}    \label{Trqdjpx.2}
A_1 = \Tr \, q^\fd \, \fa_{\la^{(1)}}(\alpha_1) 
= (q; q)_\infty^{-\chi(X)} \cdot \langle e_X, \alpha_1 \rangle 
  \cdot {(-n_1) q^{n_1} \over 1-q^{n_1}}.
\end{eqnarray}
Combining with \eqref{Trqdjpx.1}, we see that our lemma holds in this case.

\bigskip\noindent
{\bf Case 2:} $\sum_{i=1}^N |\la^{(i)}| = 0$ but $|\la^{(i_0)}| \ne 0$ 
for some $i_0$. Then, $N \ge 2$, and we may assume that $|\la^{(i_0)}| < 0$. 
To simplify the expressions, we further assume that 
every $\alpha_i$ has an even degree. Note that $A_N$ can be rewritten as
\begin{eqnarray*}  
& &\Tr \, q^\fd {\fa_{\la^{(i_0)}}(\alpha_{i_0}) \over \la^{({i_0})}!} 
     \prod_{1 \le i \le N, i \ne i_0} {\fa_{\la^{(i)}}(\alpha_i) \over \la^{(i)}!}      \\
&+&\sum_{r = 1}^{i_0 -1} \Tr \, q^\fd \prod_{i=1}^{r-1} {\fa_{\la^{(i)}}(\alpha_i) \over \la^{(i)}!}
    \cdot \left [{\fa_{\la^{(r)}}(\alpha_r) \over \la^{(r)}!},
      {\fa_{\la^{(i_0)}}(\alpha_{i_0}) \over \la^{({i_0})}!} \right ] 
    \cdot \prod_{r+1 \le i \le N, i \ne i_0} {\fa_{\la^{(i)}}(\alpha_i) \over \la^{(i)}!}.
\end{eqnarray*}
Since $q^\fd \fa_{\la^{(i_0)}}(\alpha_{i_0}) 
= q^{-|\la^{(i_0)}|} \fa_{\la^{(i_0)}}(\alpha_{i_0})  q^\fd$, we see that $A_N$ is equal to
\begin{eqnarray*}  
& &q^{-|\la^{(i_0)}|} \, \Tr \, {\fa_{\la^{(i_0)}}(\alpha_{i_0}) \over \la^{({i_0})}!} q^\fd 
     \prod_{1 \le i \le N, i \ne i_0} {\fa_{\la^{(i)}}(\alpha_i) \over \la^{(i)}!}      \\
& &+ \sum_{r = 1}^{i_0 -1} \Tr \, q^\fd \prod_{i=1}^{r-1} {\fa_{\la^{(i)}}(\alpha_i) \over \la^{(i)}!}
    \cdot \left [{\fa_{\la^{(r)}}(\alpha_r) \over \la^{(r)}!},
      {\fa_{\la^{(i_0)}}(\alpha_{i_0}) \over \la^{({i_0})}!} \right ] 
    \cdot \prod_{r+1 \le i \le N, i \ne i_0} {\fa_{\la^{(i)}}(\alpha_i) \over \la^{(i)}!}  \\
&=&q^{-|\la^{(i_0)}|} \, \Tr \, q^\fd 
     \prod_{1 \le i \le N, i \ne i_0} {\fa_{\la^{(i)}}(\alpha_i) \over \la^{(i)}!} \cdot
     {\fa_{\la^{(i_0)}}(\alpha_{i_0}) \over \la^{({i_0})}!}      \\
& &+ \sum_{r = 1}^{i_0 -1} \Tr \, q^\fd \prod_{i=1}^{r-1} {\fa_{\la^{(i)}}(\alpha_i) \over \la^{(i)}!}
    \cdot \left [{\fa_{\la^{(r)}}(\alpha_r) \over \la^{(r)}!},
      {\fa_{\la^{(i_0)}}(\alpha_{i_0}) \over \la^{({i_0})}!} \right ] 
    \cdot \prod_{r+1 \le i \le N, i \ne i_0} {\fa_{\la^{(i)}}(\alpha_i) \over \la^{(i)}!}.
\end{eqnarray*}
Note that $\Tr \, q^\fd \prod_{1 \le i \le N, i \ne i_0} 
{\fa_{\la^{(i)}}(\alpha_i) \over \la^{(i)}!} \cdot
{\fa_{\la^{(i_0)}}(\alpha_{i_0}) \over \la^{({i_0})}!}$ is equal to
$$
A_N + \sum_{r = i_0+1}^N \Tr \, q^\fd 
\prod_{1 \le i \le r-1, i \ne i_0} {\fa_{\la^{(i)}}(\alpha_i) \over \la^{(i)}!} 
\cdot \left [{\fa_{\la^{(r)}}(\alpha_r) \over \la^{(r)}!},
      {\fa_{\la^{(i_0)}}(\alpha_{i_0}) \over \la^{({i_0})}!} \right ] \cdot
\prod_{i = r+1}^N {\fa_{\la^{(i)}}(\alpha_i) \over \la^{(i)}!}.
$$
Therefore, we conclude that $(1 - q^{-|\la^{(i_0)}|})A_N$ is equal to
$$
q^{-|\la^{(i_0)}|} \, \sum_{r = i_0+1}^N \Tr \, q^\fd 
\prod_{1 \le i \le r-1, i \ne i_0} {\fa_{\la^{(i)}}(\alpha_i) \over \la^{(i)}!} 
\cdot \left [{\fa_{\la^{(r)}}(\alpha_r) \over \la^{(r)}!},
      {\fa_{\la^{(i_0)}}(\alpha_{i_0}) \over \la^{({i_0})}!} \right ] \cdot
\prod_{i = r+1}^N {\fa_{\la^{(i)}}(\alpha_i) \over \la^{(i)}!}
$$
$$
+ \sum_{r = 1}^{i_0 -1} \Tr \, q^\fd \prod_{i=1}^{r-1} {\fa_{\la^{(i)}}(\alpha_i) \over \la^{(i)}!}
    \cdot \left [{\fa_{\la^{(r)}}(\alpha_r) \over \la^{(r)}!},
      {\fa_{\la^{(i_0)}}(\alpha_{i_0}) \over \la^{({i_0})}!} \right ] 
    \cdot \prod_{r+1 \le i \le N, i \ne i_0} {\fa_{\la^{(i)}}(\alpha_i) \over \la^{(i)}!}.
$$
Put $n_0 = -|\la^{(i_0)}| > 0$. It follows that $A_N$ is equal to 
$$
{q^{n_0} \over 1 - q^{n_0}} \, \sum_{r = i_0+1}^N \Tr \, q^\fd 
\prod_{1 \le i \le r-1, i \ne i_0} {\fa_{\la^{(i)}}(\alpha_i) \over \la^{(i)}!} 
\cdot \left [{\fa_{\la^{(r)}}(\alpha_r) \over \la^{(r)}!},
      {\fa_{\la^{(i_0)}}(\alpha_{i_0}) \over \la^{({i_0})}!} \right ] \cdot
\prod_{i = r+1}^N {\fa_{\la^{(i)}}(\alpha_i) \over \la^{(i)}!}
$$
\begin{eqnarray}    \label{Trqdjpx.3}
+ {1 \over 1 - q^{n_0}} \, 
\sum_{r = 1}^{i_0 -1} \Tr \, q^\fd \prod_{i=1}^{r-1} {\fa_{\la^{(i)}}(\alpha_i) \over \la^{(i)}!}
    \cdot \left [{\fa_{\la^{(r)}}(\alpha_r) \over \la^{(r)}!},
      {\fa_{\la^{(i_0)}}(\alpha_{i_0}) \over \la^{({i_0})}!} \right ] 
    \cdot \prod_{r+1 \le i \le N, i \ne i_0} {\fa_{\la^{(i)}}(\alpha_i) \over \la^{(i)}!}. \quad
\end{eqnarray}
By Lemma~\ref{tau_k_tau_{k-1}}~(i) and (ii), our lemma holds in this case as well.
\end{proof}

The following theorem provides the structure of the trace 
$$
\Tr \, q^\fd \, W(\fL_1, z) \, 
\prod_{i=1}^N {\fa_{\la^{(i)}}(\alpha_i) \over \la^{(i)}!}.
$$

\begin{theorem}   \label{ThmJJkAlpha}
For $1 \le i \le N$, let $\la^{(i)} = \big (\cdots (-n)^{\w m^{(i)}_{n}} \cdots (-1)^{\w m^{(i)}_1}
1^{m^{(i)}_1} \cdots n^{m^{(i)}_{n}} \cdots )$ and $\alpha_i \in H^*(X)$ be homogeneous. 
Then, $\displaystyle{\Tr \, q^\fd \, W(\fL_1, z) \, 
\prod_{i=1}^N {\fa_{\la^{(i)}}(\alpha_i) \over \la^{(i)}!}}$ is equal to
$$   
z^{\sum_{i=1}^N |\la^{(i)}|} \cdot (q; q)_\infty^{-\chi(X)} \cdot \prod_{i=1}^N 
\big \langle (1_X - K_X)^{\sum_{n \ge 1} m^{(i)}_{n}}, \alpha_i \big \rangle \cdot
$$
$$
\cdot \prod_{1 \le i \le N, n \ge 1} \left ( {(-1)^{m^{(i)}_{n}} \over m^{(i)}_{n}!} 
    {q^{n m^{(i)}_{n}} \over (1-q^n)^{m^{(i)}_{n}}} {1 \over \w m^{(i)}_{n}!} 
    {1 \over (1-q^n)^{\w m^{(i)}_{n}}} \right ) + \widetilde W,
$$
and the lower weight term $\widetilde W$ is a linear combination of expressions of the form:
\begin{eqnarray}    \label{ThmJJkAlpha.0}
z^{\sum_{i=1}^N |\la^{(i)}|} \cdot (q; q)_\infty^{-\chi(X)} \cdot {\rm Sign}(\pi) \cdot
\prod_{i=1}^u \left \langle K_X^{r_i}e_X^{r_i'}, \prod_{j \in \pi_i} 
\alpha_j \right \rangle \cdot
\prod_{i=1}^v {q^{n_i w_ip_i} \over (1-q^{n_i})^{w_i}}  
\end{eqnarray} 
where $\sum_{i=1}^v w_i < \sum_{i=1}^N \ell(\la^{(i)})$, 
the integers $u, v$, $r_i, r_i' \ge 0, n_i > 0, 
w_i >0, p_i \in \{0, 1\}$ and the partition $\pi = \{\pi_1, \ldots, \pi_u\}$
of $\{1, \ldots, N\}$ depend only on the generalized partitions $\la^{(1)}, \ldots, \la^{(N)}$, and
${\rm Sign}(\pi)$ is the sign compensating the formal difference between
$\prod_{i=1}^u \prod_{j \in \pi_i} \alpha_j$ and $\alpha_1 \cdots \alpha_N$.
Moreover, the coefficients of this linear combination are independent of 
$q, \alpha_1, \ldots, \alpha_N$ and $X$.
\end{theorem}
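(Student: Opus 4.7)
The strategy is to apply the three preceding technical lemmas in sequence. Using the factorization $W(\fL_1, z) = \Gamma_-(\fL_1 - K_X, z)\,\Gamma_+(-\fL_1, z)$ from \eqref{WLz}, Lemma~\ref{LemmaTrace} commutes $\Gamma_-$ past the Heisenberg monomials $\prod_i \fa_{\la^{(i)}}(\alpha_i)/\la^{(i)}!$, then past $\Gamma_+$ (using $\langle \fL_1, \fL_1 - K_X\rangle = 0$ in \eqref{CarLemma5.2}), and cyclically around the trace. This produces a sum over partitions $\mu^{(i,s)} \in \Wcp_+$ with combinatorial factors $(-(zq^s)^n)^{m^{(i,s)}_n}/m^{(i,s)}_n!$ and with insertions of $(1_X - K_X)^{\sum_n m^{(i,s)}_n}$ into $\alpha_i$. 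Lemma~\ref{20150806957} then eliminates $\Gamma_+$ in the same spirit, summing over $\w\mu^{(i,t)} \in \Wcp_-$ subject to the degree-balance condition $\sum_i(|\w\la^{(i)}| - \sum_t |\w\mu^{(i,t)}|) = 0$. Finally, Lemma~\ref{Trqdjpx} evaluates the residual Heisenberg trace.

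The leading term corresponds to the unique ``fully-contracted'' choice in which the $\mu^{(i,s)}$'s exhaust every positive part of $\la^{(i)}$ and the $\w\mu^{(i,t)}$'s exhaust every negative part; no operators remain, so Lemma~\ref{Trqdjpx} collapses to $\Tr\, q^\fd = (q;q)_\infty^{-\chi(X)}$. The geometric resummation over $s$ of the distributions of $m^{(i)}_n$ copies of $n$ among the $\mu^{(i,s)}$'s yields, via the multinomial identity,
\[
\sum_{\sum_s k_s = m^{(i)}_n}\prod_s \frac{(-(zq^s)^n)^{k_s}}{k_s!} = \frac{(-z^n)^{m^{(i)}_n}}{m^{(i)}_n!}\left(\frac{q^n}{1-q^n}\right)^{m^{(i)}_n},
\]
and the analogous computation for the negative parts produces $z^{-n\w m^{(i)}_n}/(\w m^{(i)}_n!\,(1-q^n)^{\w m^{(i)}_n})$. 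Collecting the $(1_X - K_X)$-insertions into $\prod_i \langle (1_X - K_X)^{\sum_n m^{(i)}_n}, \alpha_i\rangle$ and the $z$-exponents into $z^{\sum_i |\la^{(i)}|}$ reproduces exactly the claimed leading term.

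Any other admissible choice leaves some Heisenberg operators unabsorbed: if $M^{(i)}_\pm$ counts the absorbed $\pm$-parts of $\la^{(i)}$, then Lemmas~\ref{LemmaTrace} and~\ref{20150806957} jointly contribute weight $\sum_i(M^{(i)}_+ + M^{(i)}_-)$, while Lemma~\ref{Trqdjpx} adds at most $v \le \sum_i (\ell(\la^{(i)}) - M^{(i)}_+ - M^{(i)}_-)/2$, totalling at most $\tfrac12\sum_i(M^{(i)}_+ + M^{(i)}_-) + \tfrac12\sum_i \ell(\la^{(i)}) \le \sum_i\ell(\la^{(i)})$, with strict inequality whenever the absorption is incomplete. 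The $K_X^{r_i}$ and $e_X^{r_i'}$ factors in \eqref{ThmJJkAlpha.0} arise respectively from the iterated $(1_X - K_X)$-insertions and from the Euler-class corrections of Lemma~\ref{tau_k_tau_{k-1}}(ii) invoked inside Lemma~\ref{Trqdjpx}; the partition $\pi$ and its sign record how the K\"unneth expansions of $\tau_{k*}\alpha_i$ regroup the $\alpha_j$'s during successive commutations. The main obstacle is not any single hard step but the careful bookkeeping of signs, of the $e_X$-corrections from swapping adjacent $\fa$'s, and the verification that the coefficients of $\widetilde W$ are universal (independent of $q$, the $\alpha_i$, and $X$); this universality is inherited from the fact that every rule used---commutators, Lemma~\ref{tau_k_tau_{k-1}}, and the explicit formulas of Lemmas~\ref{LemmaTrace}--\ref{Trqdjpx}---is itself universal.
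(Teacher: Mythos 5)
Your proposal is correct and follows essentially the same route as the paper: combine Lemmas~\ref{LemmaTrace}, \ref{20150806957} and \ref{Trqdjpx}, isolate the fully-absorbed configuration as the leading term via the multinomial resummation $\sum_{\sum_s k_s = m}\prod_s x_s^{k_s}/k_s! = (\sum_s x_s)^m/m!$ with $\sum_{s\ge 1} q^{sn} = q^n/(1-q^n)$, and bound the weight of the partially-absorbed configurations by $\tfrac12\sum_i(M^{(i)}_+ + M^{(i)}_-) + \tfrac12\sum_i\ell(\la^{(i)}) < \sum_i\ell(\la^{(i)})$. This is precisely the paper's Case A / Case B dichotomy, including the expansion of $(1_X-K_X)^b$ into powers of $K_X$ and the universality argument inherited from the constituent lemmas.
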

\begin{proof}
For simplicity, put $\Tr_\la = \displaystyle{\Tr \, q^\fd \, W(\fL_1, z) \, 
\prod_{i=1}^N {\fa_{\la^{(i)}}(\alpha_i) \over \la^{(i)}!}}$.
Combining Lemma~\ref{LemmaTrace} and Lemma~\ref{20150806957}, 
we conclude that $\Tr_\la$ is equal to
$$
\sum_{\sum_{i=1}^N (|\la^{(i)}| - \sum_{s \ge 1} |\mu^{(i, s)}| 
  - \sum_{t \ge 1} |\w \mu^{(i, t)}|) = 0
\atop \mu^{(i, s)} \in \Wcp_+, \, \w \mu^{(i, t)} \in \Wcp_-} \,
\prod_{1 \le i \le N \atop s, n \ge 1} {(-(zq^s)^n)^{m^{(i, s)}_{n}} \over m^{(i, s)}_{n}!} \cdot
\prod_{1 \le i \le N \atop t, n \ge 1} {(z^{-1}q^{t-1})^{n \w m^{(i,t)}_n} 
     \over {\w m^{(i,t)}_n}!}  \cdot 
$$
\begin{eqnarray*} 
\cdot     \Tr \, q^\fd \prod_{i=1}^N 
     {\fa_{\la^{(i)}-\sum_{s \ge 1} \mu^{(i, s)} - \sum_{t \ge 1} \w \mu^{(i, t)}}
\big ((1_X - K_X)^{\sum_{s, n \ge 1} m^{(i, s)}_{n}}\alpha_i \big ) \over
     \big (\la^{(i)}-\sum_{s \ge 1} \mu^{(i, s)} - \sum_{t \ge 1} \w \mu^{(i, t)}\big )!}
\end{eqnarray*}
where $\mu^{(i, s)} = \big (1^{m^{(i, s)}_{1}}\cdots n^{m^{(i, s)}_{n}}\cdots \big )$
and $\w \mu^{(i, t)} = \big (\cdots (-n)^{\w m^{(i, t)}_{n}}\cdots (-1)^{\w m^{(i, t)}_{1}}\big )$.
The sum of all the exponents of $z$ is $\sum_{i=1}^N |\la^{(i)}|$. So $\Tr_\la$ is equal to
$$
z^{\sum_{i=1}^N |\la^{(i)}|} \cdot
\sum_{\sum_{i=1}^N (|\la^{(i)}| - \sum_{s \ge 1} |\mu^{(i, s)}| - \sum_{t \ge 1} |\w \mu^{(i, t)}|) = 0
\atop \mu^{(i, s)} \in \Wcp_+, \, \w \mu^{(i, t)} \in \Wcp_-} \,
\prod_{1 \le i \le N \atop s, n \ge 1} {(-q^{sn})^{m^{(i, s)}_{n}} \over m^{(i, s)}_{n}!} \cdot
\prod_{1 \le i \le N \atop t, n \ge 1} {q^{(t-1)n \w m^{(i,t)}_n} 
     \over {\w m^{(i,t)}_n}!}  \cdot 
$$
\begin{eqnarray}   \label{ThmJJkAlpha.1}
\cdot     \Tr \, q^\fd \prod_{i=1}^N 
     {\fa_{\la^{(i)}-\sum_{s \ge 1} \mu^{(i, s)} - \sum_{t \ge 1} \w \mu^{(i, t)}}
\big ((1_X - K_X)^{\sum_{s, n \ge 1} m^{(i, s)}_{n}}\alpha_i \big ) \over
     \big (\la^{(i)}-\sum_{s \ge 1} \mu^{(i, s)} - \sum_{t \ge 1} \w \mu^{(i, t)}\big )!}.
\end{eqnarray}
By our convention, $\sum_{s \ge 1} \mu^{(i, s)} + \sum_{t \ge 1} \w \mu^{(i, t)}
\le \la^{(i)}$ for every $1 \le i \le N$. We now divide the rest of the proof into 
Case A and Case B.

\bigskip\noindent
{\bf Case A:}
$\sum_{s \ge 1} \mu^{(i, s)} + \sum_{t \ge 1} \w \mu^{(i, t)}
= \la^{(i)}$ for every $1 \le i \le N$. Then line \eqref{ThmJJkAlpha.1} is 
$$     
\Tr \, q^\fd \cdot \prod_{i=1}^N 
\big \langle (1_X - K_X)^{\sum_{s, n \ge 1} m^{(i, s)}_{n}}, \alpha_i \big \rangle
= (q; q)_\infty^{-\chi(X)} \prod_{i=1}^N 
\big \langle (1_X - K_X)^{\sum_{s, n \ge 1} m^{(i, s)}_{n}}, \alpha_i \big \rangle.
$$
Therefore, the contribution $C_1$ of this case to $\Tr_\la$ is equal to
$$   
z^{\sum_{i=1}^N |\la^{(i)}|} \cdot (q; q)_\infty^{-\chi(X)} \prod_{i=1}^N 
\big \langle (1_X - K_X)^{\sum_{n \ge 1} m^{(i)}_{n}}, \alpha_i \big \rangle 
\cdot 
$$
$$
\cdot \sum_{\sum_{s \ge 1} m^{(i, s)}_{n} = m^{(i)}_{n} \atop 1 \le i \le N,\, n \ge 1} \,
\prod_{1 \le i \le N \atop s, n \ge 1} {(-q^{sn})^{m^{(i, s)}_{n}} \over m^{(i, s)}_{n}!} 
\cdot \sum_{\sum_{t \ge 1} \w m^{(i, t)}_{n} = \w m^{(i)}_{n} \atop 1 \le i \le N,\, n \ge 1} \,
\prod_{1 \le i \le N \atop t, n \ge 1} {q^{(t-1)n \w m^{(i,t)}_n} 
     \over {\w m^{(i,t)}_n}!}. 
$$
Rewrite $q^{sn}$ as $q^{(s-1)n}q^n$. Then the contribution $C_1$ is equal to
$$   
z^{\sum_{i=1}^N |\la^{(i)}|} \cdot (q; q)_\infty^{-\chi(X)} \prod_{i=1}^N 
\big \langle (1_X - K_X)^{\sum_{n \ge 1} m^{(i)}_{n}}, \alpha_i \big \rangle 
\cdot \prod_{1 \le i \le N, n \ge 1} (-q^n)^{m^{(i)}_{n}} \cdot
$$
$$
\cdot \sum_{\sum_{s \ge 1} m^{(i, s)}_{n} = m^{(i)}_{n} \atop 1 \le i \le N,\, n \ge 1} \,
\prod_{1 \le i \le N \atop s, n \ge 1} {q^{(s-1)nm^{(i, s)}_{n}} \over m^{(i, s)}_{n}!} 
\cdot \sum_{\sum_{t \ge 1} \w m^{(i, t)}_{n} = \w m^{(i)}_{n} \atop 1 \le i \le N,\, n \ge 1} \,
\prod_{1 \le i \le N \atop t, n \ge 1} {q^{(t-1)n \w m^{(i,t)}_n} 
     \over {\w m^{(i,t)}_n}!}.
$$
Since $\displaystyle{\sum_{\sum_{s \ge 1} i_{s,n} = i_n, \, n \ge 1} \,
\prod_{s, n \ge 1} {(q^{(s-1)n})^{i_{s,n}} \over i_{s,n}!} 
= \prod_{n \ge 1} \left ( {1 \over i_{n}!} {1 \over (1-q^n)^{i_n}} \right )}$,
$C_1$ is equal to
$$   
z^{\sum_{i=1}^N |\la^{(i)}|} \cdot (q; q)_\infty^{-\chi(X)} \prod_{i=1}^N 
\big \langle (1_X - K_X)^{\sum_{n \ge 1} m^{(i)}_{n}}, \alpha_i \big \rangle \cdot
$$
\begin{eqnarray}     \label{ThmJJkAlpha.2}
\cdot \prod_{1 \le i \le N, n \ge 1} \left ( {(-1)^{m^{(i)}_{n}} \over m^{(i)}_{n}!} 
    {q^{n m^{(i)}_{n}} \over (1-q^n)^{m^{(i)}_{n}}} \right )
\cdot \prod_{1 \le i \le N, n \ge 1} \left ( {1 \over \w m^{(i)}_{n}!} 
    {1 \over (1-q^n)^{\w m^{(i)}_{n}}} \right ).
\end{eqnarray}

\bigskip\noindent
{\bf Case B:} $\sum_{s \ge 1} \mu^{(i, s)} + \sum_{t \ge 1} \w \mu^{(i, t)}
< \la^{(i)}$ for some $1 \le i \le N$. Without loss of generality, we may 
assume that $\sum_{s \ge 1} \mu^{(i, s)} + \sum_{t \ge 1} \w \mu^{(i, t)}
= \la^{(i)}$ for every $1 \le i \le N_1$ where $N_1< N$,
and $\sum_{s \ge 1} \mu^{(i, s)} + \sum_{t \ge 1} \w \mu^{(i, t)}
< \la^{(i)}$ for every $N_1 + 1 \le i \le N$. For $N_1+1 \le i \le N$, put
$\sum_{s \ge 1} \mu^{(i, s)} + \sum_{t \ge 1} \w \mu^{(i, t)} = \w \la^{(i)}
= \big ( \cdots (-n)^{\w p^{(i)}_n} \cdots (-1)^{\w p^{(i)}_1} 1^{p^{(i)}_1}
\cdots n^{p^{(i)}_n} \cdots \big )$.
An argument similar to that in the previous paragraph shows that 
for the fixed generalized partitions $\w \la^{(i)}$ with $N_1+1 \le i \le N$, 
the contribution $C_2$ of this case to $\Tr_\la$ is equal to
$$   
z^{\sum_{i=1}^N |\la^{(i)}|} \cdot \prod_{i=1}^{N_1} 
\big \langle (1_X - K_X)^{\sum_{n \ge 1} m^{(i)}_{n}}, \alpha_i \big \rangle \cdot
\prod_{1 \le i \le N_1 \atop n \ge 1} \left ( {(-1)^{m^{(i)}_{n}} \over m^{(i)}_{n}!} 
    {q^{n m^{(i)}_{n}} \over (1-q^n)^{m^{(i)}_{n}}}  {1 \over \w m^{(i)}_{n}!} 
    {1 \over (1-q^n)^{\w m^{(i)}_{n}}} \right )
$$
$$
\cdot 
\prod_{N_1+1 \le i \le N \atop n \ge 1} \left ( {(-1)^{p^{(i)}_{n}} \over p^{(i)}_{n}!} 
    {q^{n p^{(i)}_{n}} \over (1-q^n)^{p^{(i)}_{n}}}  {1 \over \w p^{(i)}_{n}!} 
    {1 \over (1-q^n)^{\w p^{(i)}_{n}}} \right )
$$
\begin{eqnarray}     \label{ThmJJkAlpha.3}
\cdot     \Tr \, q^\fd \prod_{i=N_1+1}^N 
     {\fa_{\la^{(i)} - \w \la^{(i)}}
\big ((1_X - K_X)^{\sum_{n \ge 1} p^{(i)}_{n}}\alpha_i \big ) \over
     \big (\la^{(i)}-\w \la^{(i)} \big )!}.
\end{eqnarray}
By Lemma~\ref{Trqdjpx}, $C_2$ is a linear combination of expressions of the form:
$$   
z^{\sum_{i=1}^N |\la^{(i)}|} \cdot \prod_{i=1}^{N_1} 
\big \langle (1_X - K_X)^{\sum_{n \ge 1} m^{(i)}_{n}}, \alpha_i \big \rangle \cdot
\prod_{1 \le i \le N_1 \atop n \ge 1} \left ( {(-1)^{m^{(i)}_{n}} \over m^{(i)}_{n}!} 
    {q^{n m^{(i)}_{n}} \over (1-q^n)^{m^{(i)}_{n}}}  {1 \over \w m^{(i)}_{n}!} 
    {1 \over (1-q^n)^{\w m^{(i)}_{n}}} \right )
$$
\begin{eqnarray*}
\cdot 
\prod_{N_1+1 \le i \le N \atop n \ge 1} \left ( {(-1)^{p^{(i)}_{n}} \over p^{(i)}_{n}!} 
    {q^{n p^{(i)}_{n}} \over (1-q^n)^{p^{(i)}_{n}}}  {1 \over \w p^{(i)}_{n}!} 
    {1 \over (1-q^n)^{\w p^{(i)}_{n}}} \right ) \cdot     
\end{eqnarray*}
\begin{eqnarray*}     
(q; q)_\infty^{-\chi(X)} \cdot {\rm Sign}(\pi) \cdot
\prod_{i=1}^u \left \langle e_X^{m_i}, \prod_{j \in \pi_i} 
\big ((1_X - K_X)^{\sum_{n \ge 1} p^{(j)}_{n}}\alpha_j \big ) \right \rangle \cdot
\prod_{i=1}^v {q^{n_i} \over 1 - q^{n_i}}
\end{eqnarray*}
where $v < \sum_{i=N_1+1}^N \ell(\la^{(i)} - \w \la^{(i)})$, $n_i > 0$, $m_i \ge 0$,
$\{\pi_1, \ldots, \pi_u\}$ is a partition of $\{N_1+1, \ldots, N\}$,
and ${\rm Sign}(\pi)$ compensates the formal difference between
$\prod_{i=1}^u \prod_{j \in \pi_i} \alpha_j$ and $\alpha_{N_1+1} \cdots \alpha_N$.
The coefficients of this linear combination are independent of 
$q, \alpha_1, \ldots, \alpha_N$ and $X$, and depend only on 
the partitions $\la^{(i)} - \w \la^{(i)}$. Note that 
for nonnegative integers $a$ and $b$, the pairing 
$\langle e_X^a, (1_X -K_X)^b \beta \rangle = \langle e_X^a(1_X -K_X)^b, \beta \rangle$
is a linear combination of $\langle e_X^a K_X^c, \beta \rangle, 0 \le c \le b$.
In addition, we have 
$$
\sum_{1 \le i \le N_1, n \ge 1} \big ( m^{(i)}_{n} + \w m^{(i)}_{n}\big )
+ \sum_{N_1+1 \le i \le N, n \ge 1} \big ( p^{(i)}_{n} + \w p^{(i)}_{n}\big ) + v
< \sum_{i=1}^N \ell(\la^{(i)})
$$
regarding the weights in $C_2$.
It follows that $C_2$ is a linear combination of the expressions \eqref{ThmJJkAlpha.0}.
Combining with \eqref{ThmJJkAlpha.2} completes the proof of our theorem.
\end{proof}

\begin{remark}   \label{Rmk-914}
When $N = 1$, we can work out the lower weight term $\widetilde W$ in Theorem~\ref{ThmJJkAlpha}
by examining its proof more carefully and by using \eqref{Trqdjpx.2}. To state the result,
let $\la = (\cdots (-n)^{\w m_n} \cdots (-1)^{\w m_1} 1^{m_1}\cdots n^{m_n} \cdots) \in \Wcp$. 
For $n_1 \ge 1$ with $m_{n_1} \cdot \w m_{n_1} \ge 1$, define
$m_{n_1}(n_1) = m_{n_1}-1$, $\w m_{n_1}(n_1) = \w m_{n_1}-1$,
and $m_n(n_1) = m_n$ and $\w m_n(n_1) = \w m_n$ if $n \ne n_1$. Then, 
$\displaystyle{\Tr \, q^\fd \, W(\fL_1, z) \, {\fa_\la(\alpha) \over \la!}}$ 
is equal to the sum
\begin{eqnarray*}  
& &z^{|\la|} \cdot (q; q)_\infty^{-\chi(X)} \cdot 
   \langle (1_X - K_X)^{\sum_{n \ge 1} m_{n}}, \alpha \rangle  \cdot  \\
& &\quad \cdot \prod_{n \ge 1} \left ( {(-1)^{m_{n}} \over m_{n}!} {q^{n m_n} \over (1-q^n)^{m_n}} 
   {1 \over \w m_{n}!} {1 \over (1-q^n)^{\w m_n}} \right )  \\
&+&z^{|\la|} \cdot (q; q)_\infty^{-\chi(X)} \cdot \langle e_X, \alpha \rangle 
   \cdot \sum_{n_1 \ge 1 \, \text{\rm with } m_{n_1} \cdot \w m_{n_1} \ge 1} 
   {n_1 q^{n_1} \over 1-q^{n_1}} \cdot   \\
& &\quad \cdot \prod_{n \ge 1} \left ( {(-1)^{m_{n}(n_1)} \over m_{n}(n_1)!} 
   {q^{n m_n(n_1)} \over (1-q^n)^{m_n(n_1)}} 
   {1 \over \w m_{n}(n_1)!} {1 \over (1-q^n)^{\w m_n(n_1)}} \right ).
\end{eqnarray*}
\end{remark}

The next lemma is used to organize the leading term in Theorem~\ref{ThmJJkAlpha}.

\begin{lemma}   \label{ThetaAlphaK}
For $\alpha \in H^*(X)$ and $k \ge 0$, define $\Theta^\alpha_k(q)$ to be 
\begin{eqnarray}   \label{ThetaAlphaK.01}
-\sum_{\ell(\la) = k +2, |\la|=0}
  \big \langle (1_X - K_X)^{\sum_{n \ge 1} i_{n}}, \alpha \big \rangle \cdot
  \prod_{n \ge 1} \left ( {(-1)^{i_{n}} \over i_{n}!} 
  {q^{n i_{n}} \over (1-q^n)^{i_{n}}} {1 \over \w i_{n}!} 
  {1 \over (1-q^n)^{\w i_{n}}} \right )   \quad
\end{eqnarray}
where $\la = (\cdots (-n)^{\w i_{n}} \cdots (-1)^{\w i_1} 1^{i_1} \cdots n^{i_{n}} \cdots )$. 
Then, $\Theta^\alpha_k(q) = \Coe_{z^0} \Theta^\alpha_k(q, z)$ 
which denotes the coefficient of $z^0$ in $\Theta^\alpha_k(q, z)$ defined by
$$    
-\sum_{a, s_1, \ldots, s_a, b, t_1, \ldots, t_b \ge 1 
\atop \sum_{i=1}^a s_i + \sum_{j = 1}^b t_j = k+2} 
\big \langle (1_X - K_X)^{\sum_{i = 1}^a s_i}, \alpha \big \rangle
\prod_{i=1}^a {(-1)^{s_i} \over s_i!} \cdot \prod_{j=1}^b {1 \over t_j!} 
$$
\begin{eqnarray}   \label{ThetaAlphaK.02}
\cdot \sum_{n_1 > \cdots > n_a} \prod_{i= 1}^a {(q z)^{n_i s_i} \over (1-q^{n_i})^{s_i}}
\cdot \sum_{m_1 > \cdots > m_b} \prod_{j= 1}^b {z^{-m_jt_j} \over (1-q^{m_j})^{t_j}}.
\end{eqnarray} 
\end{lemma}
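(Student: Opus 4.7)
The plan is to derive the identity $\Coe_{z^0}\Theta^\alpha_k(q,z) = \Theta^\alpha_k(q)$ by a direct bijective matching of the indexing sets of \eqref{ThetaAlphaK.02} and \eqref{ThetaAlphaK.01}, followed by a termwise comparison of summands. The whole argument is a bookkeeping check; no nontrivial identity is needed.

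First I would isolate the $z$-dependence of \eqref{ThetaAlphaK.02}. The only $z$-factors are $(qz)^{n_is_i}$ and $z^{-m_jt_j}$, which combine to the monomial $z^{\sum_{i=1}^a n_is_i - \sum_{j=1}^b m_jt_j}$. Consequently $\Coe_{z^0}\Theta^\alpha_k(q,z)$ is obtained from \eqref{ThetaAlphaK.02} by restricting the sum to tuples $(a,(s_i),(n_i),b,(t_j),(m_j))$ satisfying the balance condition $\sum_i n_is_i = \sum_j m_jt_j$ and replacing the above monomial by $\prod_i q^{n_is_i}$.

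Next I would exhibit the bijection. To a constrained tuple with $n_1 > \cdots > n_a$ and $m_1 > \cdots > m_b$, associate the generalized partition
\[
\la = \big(\cdots (-n)^{\w i_n}\cdots (-1)^{\w i_1} 1^{i_1}\cdots n^{i_n}\cdots\big) \in \Wcp
\]
defined by $i_{n_i}:=s_i$, $\w i_{m_j}:=t_j$, and all other multiplicities zero. This is reversible: given any $\la$ having both positive and negative parts, list its distinct positive part-sizes in decreasing order as $n_1 > \cdots > n_a$ (with $s_i:=i_{n_i}$) and its distinct negative part-sizes as $-m_1 < \cdots < -m_b$ (with $t_j:=\w i_{m_j}$). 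Under this bijection the constraint $\sum_i s_i + \sum_j t_j = k+2$ becomes $\ell(\la) = k+2$, while the balance condition becomes $|\la| = 0$. For $k \ge 0$ the requirement $\ell(\la) = k+2 \ge 2$ combined with $|\la| = 0$ forces $\la$ to have at least one positive and one negative part (otherwise $|\la|=0$ would force $\la = \emptyset$, contradicting $\ell(\la)\ge 2$), so the restriction $a, b \ge 1$ in \eqref{ThetaAlphaK.02} sees every generalized partition contributing to \eqref{ThetaAlphaK.01}.

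Finally I would match summands under the bijection: the pairing $\langle (1_X-K_X)^{\sum_i s_i},\alpha\rangle$ becomes $\langle (1_X-K_X)^{\sum_n i_n},\alpha\rangle$; the product $\prod_i \frac{(-1)^{s_i}}{s_i!}\cdot \prod_i \frac{q^{n_is_i}}{(1-q^{n_i})^{s_i}}$ becomes $\prod_n \frac{(-1)^{i_n}}{i_n!}\cdot \frac{q^{ni_n}}{(1-q^n)^{i_n}}$; and $\prod_j \frac{1}{t_j!(1-q^{m_j})^{t_j}}$ becomes $\prod_n \frac{1}{\w i_n!(1-q^n)^{\w i_n}}$; here $0! = 1$, $(-1)^0 = 1$, $q^0 = 1$, and $(1-q^n)^0 = 1$ absorb the vacant multiplicities. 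The common leading minus signs in \eqref{ThetaAlphaK.01} and \eqref{ThetaAlphaK.02} agree, completing the identification. The only step requiring care is the degenerate-case check ensuring the bijection is exhaustive; no genuine obstacle arises.
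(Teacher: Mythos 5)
Your proof is correct and follows essentially the same route as the paper's: both arguments extract the $z^0$-coefficient via the balance condition $\sum_i n_is_i=\sum_j m_jt_j$ and then match the resulting constrained tuples bijectively with generalized partitions $\la$ satisfying $\ell(\la)=k+2$ and $|\la|=0$ (the paper runs the dictionary in the opposite direction, reading off the $s_i$'s and $t_j$'s from the nonzero multiplicities of $\la$). Your explicit check that $|\la|=0$ together with $\ell(\la)\ge 2$ forces both a positive and a negative part, so that the restriction $a,b\ge 1$ loses nothing, is exactly the degenerate-case point the paper handles by noting $a\ge 1$ and $b\ge 1$.
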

\begin{proof}
Put $A = \big \langle (1_X - K_X)^{\sum_{n \ge 1} i_{n}}, \alpha \big \rangle$
which implicitly depends on $\sum_{n \ge 1} i_{n}$. Rewrite $|\la|$ and $\ell(\la)$ in terms of 
the integers $i_n$ and $\w i_n$. Then, $\Theta^\alpha_k(q)$ is equal to
\begin{eqnarray}     \label{ThetaAlphaK.1}
-\sum_{\sum_{n \ge 1} i_n + \sum_{n \ge 1} \w i_n = k +2
  \atop \sum_{n \ge 1} n i_{n} = \sum_{n \ge 1} n \w i_{n} > 0} A \,
  \prod_{n \ge 1} \left ( {(-1)^{i_{n}} \over i_{n}!} 
  {q^{n i_{n}} \over (1-q^n)^{i_{n}}} {1 \over \w i_{n}!} 
  {1 \over (1-q^n)^{\w i_{n}}} \right ).
\end{eqnarray}
Denote the {\it positive} integers in the {\it ordered} list $\{i_1, \ldots, i_n, \ldots \}$
by $s_a, \ldots, s_1$ respectively (e.g., if the ordered list 
$\{i_1, \ldots, i_n, \ldots \}$ is $\{2, 0, 5, 4, 0, \ldots \}$, then $a=3$ with 
$s_3 = 2, s_2 =5, s_1 = 4$). We have $a \ge 1$. Similarly, 
denote the {\it positive} integers in the {\it ordered} list 
$\{ \w i_1, \ldots, \w i_n, \ldots \}$ by $t_b, \ldots, t_1$ respectively. Then $b \ge 1$.
Since $\sum_{n \ge 1} i_{n} = \sum_{i=1}^a s_i$, we get
$A = \big \langle (1_X - K_X)^{\sum_{i = 1}^a s_i}, \alpha \big \rangle$.
Rewriting \eqref{ThetaAlphaK.1} in terms of $s_a, \ldots, s_1$ and $t_b, \ldots, t_1$, 
we see that $\Theta^\alpha_k(q) = \Coe_{z^0} \Theta^\alpha_k(q, z)$.
\end{proof}

We remark that the multiple $q$-zeta value $\Theta^\alpha_k(q, z)$ has weight $(k+2)$.

\begin{theorem}   \label{ThmFk1NAlpha1N}
For $1 \le i \le N$, let $k_i \ge 0$ and $\alpha_i \in H^*(X)$ be homogeneous. Then, 
\begin{eqnarray}   \label{ThmFk1NAlpha1N.00}
F^{\alpha_1, \ldots, \alpha_N}_{k_1, \ldots, k_N}(q) 
= (q; q)_\infty^{-\chi(X)} \cdot \Coe_{z_1^0 \cdots z_N^0} 
\left (\prod_{i=1}^N \Theta^{\alpha_i}_{k_i}(q, z_i) \right ) + W_1,
\end{eqnarray}
and the lower weight term $W_1$ is an infinite linear combination of the expressions:
\begin{eqnarray}   \label{ThmFk1NAlpha1N.0}   
(q; q)_\infty^{-\chi(X)} \cdot {\rm Sign}(\pi) \cdot
\prod_{i=1}^u \left \langle K_X^{r_i}e_X^{r_i'}, \prod_{j \in \pi_i} 
\alpha_j \right \rangle \cdot
\prod_{i=1}^v {q^{n_i w_ip_i} \over (1-q^{n_i})^{w_i}}
\end{eqnarray} 
where $\sum_{i=1}^v w_i < \sum_{i=1}^N (k_i + 2)$, 
and the integers $u, v, r_i, r_i' \ge 0, 
n_i > 0, w_i >0, p_i \in \{0, 1\}$ and the partition $\pi = \{\pi_1, \ldots, \pi_u\}$
of $\{1, \ldots, N\}$ depend only on the integers $k_i$. 
Moreover, the coefficients of this linear combination are independent of 
$q, \alpha_i, X$.
\end{theorem}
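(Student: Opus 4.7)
The plan is to combine Lemma~\ref{FtoW}, Theorem~\ref{char_th}, Theorem~\ref{ThmJJkAlpha}, and Lemma~\ref{ThetaAlphaK}. First, by Lemma~\ref{FtoW},
\[
F^{\alpha_1, \ldots, \alpha_N}_{k_1, \ldots, k_N}(q) = \Tr \, q^\fd \, W(\fL_1, z) \prod_{i=1}^N \fG_{k_i}(\alpha_i).
\]
Expanding each $\fG_{k_i}(\alpha_i)$ via Theorem~\ref{char_th} rewrites $F$ as a finite linear combination of traces of the form
\[
\Tr \, q^\fd \, W(\fL_1, z) \prod_{i=1}^N \frac{\fa_{\la^{(i)}}(\beta_i)}{\la^{(i)}!},
\]
where $\beta_i \in \{\alpha_i,\, K_X\alpha_i,\, K_X^2 \alpha_i,\, e_X\alpha_i\}$, $|\la^{(i)}|=0$, and $\ell(\la^{(i)}) \in \{k_i,\, k_i+1,\, k_i+2\}$; the scalar coefficients depend only on $k_i$ and $\la^{(i)}$ and are independent of $q, \alpha_i, X$. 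Each such trace is then evaluated by Theorem~\ref{ThmJJkAlpha}.

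The leading contribution to $F$ arises from choosing $\beta_i = \alpha_i$ with $\ell(\la^{(i)}) = k_i + 2$ (i.e., the first summand of Theorem~\ref{char_th}) paired with the leading piece of Theorem~\ref{ThmJJkAlpha}. Since $|\la^{(i)}|=0$ for every $i$, the factor $z^{\sum_i |\la^{(i)}|}$ equals one, and the sum over $\la^{(1)}, \ldots, \la^{(N)}$ factorizes over $i$. By Lemma~\ref{ThetaAlphaK}, the $i$-th factor equals $-\Theta^{\alpha_i}_{k_i}(q) = -\Coe_{z_i^0}\Theta^{\alpha_i}_{k_i}(q,z_i)$. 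Combining the $N$ factors with the $(-1)^N$ coming from Theorem~\ref{char_th}, the leading contribution assembles exactly into
\[
(q;q)_\infty^{-\chi(X)}\,\Coe_{z_1^0 \cdots z_N^0}\prod_{i=1}^N \Theta^{\alpha_i}_{k_i}(q,z_i),
\]
matching the first term of \eqref{ThmFk1NAlpha1N.00}.

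All remaining contributions are absorbed into $W_1$. They arise from two sources: (i) substitutions where some $\beta_i$ is $K_X\alpha_i$, $K_X^2\alpha_i$, or $e_X\alpha_i$ (forcing $\ell(\la^{(i)}) < k_i + 2$), combined with the leading piece of Theorem~\ref{ThmJJkAlpha}; and (ii) the lower weight term $\widetilde W$ of Theorem~\ref{ThmJJkAlpha} applied to any admissible choice of $\beta_i$. In case (i), the pairing $\langle (1_X - K_X)^c, K_X^a e_X^b \alpha_i\rangle$ expands via the binomial theorem into a linear combination of $\langle K_X^r e_X^b, \alpha_i\rangle$; in case (ii), the expressions \eqref{ThmJJkAlpha.0} already have the required shape once the same binomial expansion is performed. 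In both cases the total weight is strictly less than $\sum_i \ell(\la^{(i)}) \le \sum_i (k_i+2)$, and the resulting terms have the form \eqref{ThmFk1NAlpha1N.0}. The main obstacle is the bookkeeping: one must verify that the partition $\pi$ of $\{1,\ldots,N\}$ and the compensating sign ${\rm Sign}(\pi)$ can be extracted consistently from Theorem~\ref{ThmJJkAlpha}, and that all scalar coefficients are independent of $q, \alpha_i, X$. The latter independence is inherited directly from the corresponding statements in Theorem~\ref{char_th} and Theorem~\ref{ThmJJkAlpha}.
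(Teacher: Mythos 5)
Your proposal is correct and follows essentially the same route as the paper: reduce via Lemma~\ref{FtoW}, expand the $\fG_{k_i}(\alpha_i)$ by Theorem~\ref{char_th}, isolate the contribution with all $\beta_i=\alpha_i$ and $\ell(\la^{(i)})=k_i+2$, evaluate it with the leading part of Theorem~\ref{ThmJJkAlpha} and Lemma~\ref{ThetaAlphaK}, and absorb everything else (the other summands of Theorem~\ref{char_th} and the $\widetilde W$ terms) into $W_1$ with the same weight count. The only cosmetic difference is that the paper names the two pieces of the remainder $W_{1,1}$ and $W_{1,2}$ and defers the binomial expansion of $\langle (1_X-K_X)^c,\cdot\rangle$ to the proof of Theorem~\ref{ThmJJkAlpha}.
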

\begin{proof}
By Lemma~\ref{FtoW}, $\displaystyle{F^{\alpha_1, \ldots, \alpha_N}_{k_1, \ldots, k_N}(q) 
= \Tr \, q^\fd \, W(\fL_1, z) \, \prod_{i=1}^N \fG_{k_i}(\alpha_i)}$.
Combining with Theorem~\ref{char_th} and Theorem~\ref{ThmJJkAlpha}, we conclude that
\begin{eqnarray}     \label{ThmFk1NAlpha1N.1}
F^{\alpha_1, \ldots, \alpha_N}_{k_1, \ldots, k_N}(q)
= \widetilde F^{\alpha_1, \ldots, \alpha_N}_{k_1, \ldots, k_N}(q) + W_{1,1}
\end{eqnarray}    
where $W_{1,1}$ is an infinite linear combination of 
the expressions \eqref{ThmFk1NAlpha1N.0}, and  
\begin{eqnarray}     \label{DefWideF}
\widetilde F^{\alpha_1, \ldots, \alpha_N}_{k_1, \ldots, k_N}(q) 
:= (-1)^N \cdot \sum_{\ell(\la^{(i)}) = k_i +2, |\la^{(i)}|=0 \atop 1 \le i \le N} 
\Tr \, q^\fd \, W(\fL_1, z) \, \prod_{i=1}^N
   {\mathfrak a_{\la^{(i)}}(\alpha_i) \over \la^{(i)}!}.
\end{eqnarray} 
Applying Theorem~\ref{ThmJJkAlpha} again, 
we see that $\widetilde F^{\alpha_1, \ldots, \alpha_N}_{k_1, \ldots, k_N}(q)$ is equal to
$$
(-1)^N (q; q)_\infty^{-\chi(X)} \cdot 
\sum_{\ell(\la^{(i)}) = k_i +2, |\la^{(i)}|=0 \atop 1 \le i \le N} 
\prod_{i=1}^N \big \langle (1_X - K_X)^{\sum_{n \ge 1} m^{(i)}_{n}}, \alpha_i \big \rangle \cdot
$$
\begin{eqnarray}     \label{ThmFk1NAlpha1N.2}
\cdot \prod_{1 \le i \le N, n \ge 1} \left ( {(-1)^{m^{(i)}_{n}} \over m^{(i)}_{n}!} 
    {q^{n m^{(i)}_{n}} \over (1-q^n)^{m^{(i)}_{n}}} {1 \over \w m^{(i)}_{n}!} 
    {1 \over (1-q^n)^{\w m^{(i)}_{n}}} \right ) + W_{1, 2}
\end{eqnarray}    
where the lower weight term $W_{1, 2}$ is an infinite linear combination of 
the expressions \eqref{ThmFk1NAlpha1N.0}, and we have put
$\la^{(i)} = \big (\cdots (-n)^{\w m^{(i)}_{n}} \cdots (-1)^{\w m^{(i)}_1}
1^{m^{(i)}_1} \cdots n^{m^{(i)}_{n}} \cdots )$. So
\begin{eqnarray}     \label{ThmFk1NAlpha1N.3}
   \widetilde F^{\alpha_1, \ldots, \alpha_N}_{k_1, \ldots, k_N}(q) 
&=&(q; q)_\infty^{-\chi(X)} \cdot \prod_{i=1}^N \Theta^{\alpha_i}_{k_i}(q) + W_{1, 2}
       \nonumber    \\
&=&(q; q)_\infty^{-\chi(X)} \cdot \Coe_{z_1^0 \cdots z_N^0} 
\left (\prod_{i=1}^N \Theta^{\alpha_i}_{k_i}(q, z_i) \right ) + W_{1, 2}
\end{eqnarray}
by Lemma~\ref{ThetaAlphaK}. Putting $W_1 = W_{1,1} + W_{1,2}$ completes the proof of 
\eqref{ThmFk1NAlpha1N.00}.
\end{proof}

Our next goal is to relate the lower weight term $W_{1,2}$ in \eqref{ThmFk1NAlpha1N.2}
and \eqref{ThmFk1NAlpha1N.3}
to multiple $q$-zeta values (with additional variables $z_1, \ldots, z_N$ inserted).
We will assume $e_X \alpha_i = 0$ for all $1 \le i \le N$.
We begin with a lemma strengthening Lemma~\ref{Trqdjpx}.

\begin{lemma}  \label{EX=0Trqdjpx}
Let $\la^{(1)}, \ldots, \la^{(N)} \in \Wcp$, 
and $\alpha_1, \ldots, \alpha_N \in H^*(X)$ be homogeneous. Assume that 
$e_X \alpha_i = 0$ for every $1 \le i \le N$, and $\sum_{i=1}^N |\la^{(i)}| = 0$. Put
$$
A_N = \Tr \, q^\fd \,\prod_{i=1}^N {\fa_{\la^{(i)}}(\alpha_i) \over \la^{(i)}!}.
$$
\begin{enumerate}
\item[{\rm (i)}] If $\ell(\la^{(i)}) \ge 2$ for every $1 \le i \le N$, then $A_N = 0$.

\item[{\rm (ii)}] If $A_N \ne 0$, then $A_N$ is a linear combination of the expressions:
\begin{eqnarray}        
& &(q; q)_\infty^{-\chi(X)} \cdot {\rm Sign}(\pi) \cdot
  \prod_{i=1}^u \left \langle 1_X, \prod_{j \in \pi_i} \alpha_j \right \rangle \cdot
  \prod_{i=1}^{\w \ell} {(-\w n_i)q^{\w n_i \w p_i} \over 1 - q^{\w n_i}}  
  \label{EX=0Trqdjpx.01}           \\
&=&(q; q)_\infty^{-\chi(X)} \cdot {\rm Sign}(\pi) \cdot
  \prod_{i=1}^u \left \langle 1_X, \prod_{j \in \pi_i} \alpha_j \right \rangle \cdot
  \prod_{i=1}^{\ell} {(-n_i')^{w_i} q^{n_i' p_i} \over (1 - q^{n_i'})^{w_i}}  
  \label{EX=0Trqdjpx.02}  
\end{eqnarray}
where $\w \ell = \sum_{i=1}^N \ell(\la^{(i)})/2 = \sum_{i=1}^\ell w_i$, 
$\w p_i \in \{0, 1\}$, $0 \le p_i \le w_i$,
the partition $\pi = \{\pi_1, \ldots, \pi_u\}$ of $\{1, \ldots, N\}$ depend only on 
$\la^{(1)}, \ldots, \la^{(N)}$, the integers
$\w n_1, \ldots, \w n_{\w \ell}$ are the positive parts (repeated with multiplicities) 
in $\la^{(1)}, \ldots, \la^{(N)}$, the integers $n_1', \ldots, n_{\ell}'$ denote 
the different integers in $\w n_1, \ldots, \w n_{\w \ell}$, 
and each $n_i'$ appears $w_i$ times in $\w n_1, \ldots, \w n_{\w \ell}$.
\end{enumerate}
\end{lemma}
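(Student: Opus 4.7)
The plan is to prove (i) and (ii) jointly by induction on $N$, adapting the argument of Lemma~\ref{Trqdjpx} while tracking how the hypothesis $e_X \alpha_i = 0$ kills every $e_X$-containing term. The key observation is that this hypothesis is preserved under cup products: $e_X \cdot \prod_{j \in S} \alpha_j = 0$ for any nonempty $S \subseteq \{1, \ldots, N\}$. Consequently, both the pairings $\langle e_X^{m}, \prod \alpha_j \rangle$ with $m \ge 1$ and the $e_X$-correction terms arising from the reorderings via Lemma~\ref{tau_k_tau_{k-1}}~(ii) vanish identically, so only the $m = 0$ pairings $\langle 1_X, \prod \alpha_j \rangle$ and the clean single-contraction factors $(-n)$ survive.

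\textbf{Base case and induction.} For $N = 1$, since $|\la^{(1)}| = 0$ forces $\ell(\la^{(1)}) \ge 2$, Case~1 of the proof of Lemma~\ref{Trqdjpx} (see \eqref{Trqdjpx.2}) expresses $A_1$ as a scalar multiple of $\langle e_X, \alpha_1 \rangle = 0$; thus $A_1 = 0$, which gives (i) and makes (ii) vacuous. Inductively, assuming both parts hold for $N-1$, I prove them for $N$. For (i), suppose $\ell(\la^{(i)}) \ge 2$ for all $i$. If all $|\la^{(i)}| = 0$, Case~1 expresses $A_N$ as a sum of terms containing the vanishing factor $\langle e_X, \alpha_1 \prod_{j \in S} \alpha_j \rangle$. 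If instead some $|\la^{(i_0)}| \ne 0$, apply the Case~2 reduction \eqref{Trqdjpx.3}; the commutator $[\fa_{\la^{(r)}}(\alpha_r), \fa_{\la^{(i_0)}}(\alpha_{i_0})]$ is expanded via Lemma~\ref{tau_k_tau_{k-1}}~(i) as a sum of Heisenberg monomials in $\alpha_r \alpha_{i_0}$ of length $\ell(\la^{(r)}) + \ell(\la^{(i_0)}) - 2 \ge 2$, while the reorderings of Lemma~\ref{tau_k_tau_{k-1}}~(ii) introduce only $e_X(\alpha_r \alpha_{i_0}) = 0$ corrections. The resulting trace is an $A_{N-1}$-type expression with all parts still of length $\ge 2$ and the $e_X$-vanishing condition preserved, so the inductive hypothesis yields $A_{N-1} = 0$. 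For (ii), suppose $A_N \ne 0$; by (i), at least one $\la^{(i_0)}$ has $\ell(\la^{(i_0)}) = 1$, so $\la^{(i_0)} = (\pm n_0)$ with $n_0 > 0$. Apply the Case~2 reduction; the commutator $[\fa_{\la^{(r)}}(\alpha_r)/\la^{(r)}!, \fa_{\la^{(i_0)}}(\alpha_{i_0})]$ collapses, via the multiplicity cancellation $m_{n_0}^{(r)}/\la^{(r)}! = 1/\w\la^{(r)}!$, to $(-n_0)\, \fa_{\w\la^{(r)}}(\alpha_r \alpha_{i_0})/\w\la^{(r)}!$ where $\w\la^{(r)}$ is obtained from $\la^{(r)}$ by deleting one copy of $n_0$. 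Combined with the prefactor $\frac{1}{1-q^{n_0}}$ or $\frac{q^{n_0}}{1-q^{n_0}}$ from \eqref{Trqdjpx.3}, we obtain $A_N$ as a sum of terms ${\rm Sign}_r \cdot \frac{(-n_0) q^{n_0 \w p_r}}{1-q^{n_0}} \cdot A_{N-1, r}$ with $\w p_r \in \{0, 1\}$. The inductive hypothesis expresses each $A_{N-1, r}$ as a linear combination of \eqref{EX=0Trqdjpx.01} with $\w\ell - 1$ factors in the $q$-product, and multiplying by the prefactor appends the missing factor corresponding to $\w n = n_0$, reconstructing the claimed form with $\w\ell$ factors; the merging of the $\pi$-blocks containing $r$ and $i_0$ is absorbed into ${\rm Sign}_r$.

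\textbf{Main obstacle.} The central subtlety lies in the cancellation $m_{n_0}^{(r)}/\la^{(r)}! = 1/\w\la^{(r)}!$ in the commutator, which guarantees that each reduction step contributes exactly one factor $(-n_0)$ rather than a higher power; this, together with the vanishing of the $e_X$-correction terms from Lemma~\ref{tau_k_tau_{k-1}}~(ii), is precisely what distinguishes the present lemma from the more general Lemma~\ref{Trqdjpx} and yields the linear dependence on each $\w n_i$ in \eqref{EX=0Trqdjpx.01}. A secondary bookkeeping challenge is tracking the signs ${\rm Sign}_r$ and the merging of the $\pi$-blocks across the recursion in accordance with the super-commutativity of cohomology classes, which must be handled carefully but follows standard conventions.
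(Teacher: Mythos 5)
Your argument is correct and essentially reproduces the paper's own proof: both establish (i) by the degree reduction to $\ell(\la^{(i)})=2$, $|\alpha_i|=0$ followed by induction via \eqref{Trqdjpx.1}--\eqref{Trqdjpx.3} with all $\langle e_X,\cdot\rangle$ factors and $e_X$-corrections from Lemma~\ref{tau_k_tau_{k-1}}~(ii) vanishing, and both prove (ii) by using (i) to locate a length-one $\la^{(i_0)}=(\pm n_0)$, applying the Case~2 reduction, and exploiting the collapse $[\fa_{\la^{(r)}}(\alpha_r)/\la^{(r)}!,\fa_{\mp n_0}(\alpha_{i_0})]=(-n_0)\,\fa_{\la^{(r)}-(\pm n_0)}(\alpha_r\alpha_{i_0})/(\la^{(r)}-(\pm n_0))!$ to peel off one factor $(-n_0)q^{n_0\w p}/(1-q^{n_0})$ per step. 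The only cosmetic difference is that you phrase it as a joint induction (and should formally use strong induction, since a contraction against another length-one part drops $N$ by two), but this does not affect correctness.
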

\begin{proof}
(i) As in the proof of Lemma~\ref{Trqdjpx},
$A_N = 0$ unless $\ell(\la^{(i)}) = 2$ and $|\alpha_i| = 0$ for every $1 \le i \le N$.
Assume $\ell(\la^{(i)}) = 2$ and $|\alpha_i| = 0$ for every $1 \le i \le N$.
To prove $A_N = 0$, we will use induction on $N$.
If $N = 1$, then $A_1 = 0$ by \eqref{Trqdjpx.2}. Let $N \ge 2$. 
If $|\la^{(i)}| = 0$  for every $1 \le i \le N$, then $A_N = 0$ by \eqref{Trqdjpx.1}. 
Assume $|\la^{(i_0)}| \ne 0$ for some $1 \le i_0 \le N$.
Since $\sum_{i=1}^N |\la^{(i)}| = 0$, we may further assume that $|\la^{(i_0)}| < 0$.
By \eqref{Trqdjpx.3}, Lemma~\ref{tau_k_tau_{k-1}}~(i) and (ii), and induction,
we conclude that $A_N = 0$.

(ii) Note that \eqref{EX=0Trqdjpx.02} follows from \eqref{EX=0Trqdjpx.01} since 
each integer $n_i'$ appears $w_i$ times among the integers $\w n_1, \ldots, \w n_{\w \ell}$.
In the following, we will prove \eqref{EX=0Trqdjpx.01}. 
To simplify the signs, we will assume that $|\alpha_i|$ is even for every $i$.

Since $A_N \ne 0$, we conclude from (i) that $\ell(\la^{(i_0)}) = 1$ for 
some $1 \le i_0 \le N$. If $\la^{(i_0)} = (-n_0)$ for some $n_0 > 0$,
then by \eqref{Trqdjpx.3}, $A_N$ is equal to
$$
{1 \over 1 - q^{n_0}} \, \sum_{r = 1}^{i_0 -1} \Tr \, q^\fd 
\prod_{i=1}^{r-1} {\fa_{\la^{(i)}}(\alpha_i) \over \la^{(i)}!}
    \cdot \left [{\fa_{\la^{(r)}}(\alpha_r) \over \la^{(r)}!},
      \fa_{-n_0}(\alpha_{i_0}) \right ] 
    \cdot \prod_{r+1 \le i \le N, i \ne i_0} {\fa_{\la^{(i)}}(\alpha_i) \over \la^{(i)}!}
$$
$$
+ {q^{n_0} \over 1 - q^{n_0}} \, \sum_{r = i_0+1}^N \Tr \, q^\fd 
\prod_{1 \le i \le r-1, i \ne i_0} {\fa_{\la^{(i)}}(\alpha_i) \over \la^{(i)}!} 
     \cdot \left [{\fa_{\la^{(r)}}(\alpha_r) \over \la^{(r)}!},
     \fa_{-n_0}(\alpha_{i_0}) \right ] \cdot
     \prod_{i = r+1}^N {\fa_{\la^{(i)}}(\alpha_i) \over \la^{(i)}!}.
$$
Similarly, if $\la^{(i_0)} = (n_0)$ for some $n_0 > 0$, then $A_N$ is equal to
$$
{q^{n_0} \over 1 - q^{n_0}} \, \sum_{r = 1}^{i_0 -1} \Tr \, q^\fd 
\prod_{i=1}^{r-1} {\fa_{\la^{(i)}}(\alpha_i) \over \la^{(i)}!}
    \cdot \left [\fa_{n_0}(\alpha_{i_0}), {\fa_{\la^{(r)}}(\alpha_r) \over \la^{(r)}!} \right ] 
    \cdot \prod_{r+1 \le i \le N, i \ne i_0} {\fa_{\la^{(i)}}(\alpha_i) \over \la^{(i)}!}
$$
$$
+ {1 \over 1 - q^{n_0}} \, \sum_{r = i_0+1}^N \Tr \, q^\fd 
\prod_{1 \le i \le r-1, i \ne i_0} {\fa_{\la^{(i)}}(\alpha_i) \over \la^{(i)}!} 
     \cdot \left [\fa_{n_0}(\alpha_{i_0}), {\fa_{\la^{(r)}}(\alpha_r) \over \la^{(r)}!} \right ] 
     \cdot \prod_{i = r+1}^N {\fa_{\la^{(i)}}(\alpha_i) \over \la^{(i)}!}.
$$
Note that $[{\fa_{\la^{(r)}}(\alpha_r)/\la^{(r)}!}, \fa_{-n_0}(\alpha_{i_0})]
= (-n_0) \fa_{\la^{(r)} - (n_0)}(\alpha_r \alpha_{i_0})/\big (\la^{(r)} - (n_0) \big )!$, and
$[\fa_{n_0}(\alpha_{i_0}), {\fa_{\la^{(r)}}(\alpha_r)/\la^{(r)}!}]
= (-n_0) \fa_{\la^{(r)} - (-n_0)}(\alpha_{i_0} \alpha_r)/\big (\la^{(r)} - (-n_0) \big )!$.
Therefore, by induction, $A_N$ is a linear combination of the expressions
\eqref{EX=0Trqdjpx.01}. We remark that the negative parts (repeated with multiplicities) 
in $\la^{(1)}, \ldots, \la^{(N)}$ are $-\w n_1, \ldots, -\w n_{\w \ell}$.
\end{proof}

\begin{theorem}   \label{EX=0Fk1NAlpha1N}
For $1 \le i \le N$, let $k_i \ge 0$ and $\alpha_i \in H^*(X)$ be homogeneous. 
Assume that $e_X \alpha_i = 0$ for every $1 \le i \le N$. Then, 
\begin{eqnarray}   \label{EX=0Fk1NAlpha1N.0}
\widetilde F^{\alpha_1, \ldots, \alpha_N}_{k_1, \ldots, k_N}(q)
= (q; q)_\infty^{-\chi(X)} \cdot \Coe_{z_1^0 \cdots z_N^0} 
  \left (\prod_{i=1}^N \Theta^{\alpha_i}_{k_i}(q, z_i) \right ) + W_{1,2},
\end{eqnarray}
and $(q; q)_\infty^{\chi(X)} \cdot W_{1,2}$ is a linear combination of the coefficients 
of $z_1^0 \cdots z_N^0$ in some multiple $q$-zeta values 
(with variables $z_1, \ldots, z_N$ inserted) of weights
$< \sum_{i=1}^N (k_i +2)$. Moreover, the coefficients in this linear combination 
are independent of $q$.
\end{theorem}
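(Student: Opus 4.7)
My plan is to refine the Case~B analysis in the proof of Theorem~\ref{ThmJJkAlpha}, replacing the appeal to Lemma~\ref{Trqdjpx} by the sharper Lemma~\ref{EX=0Trqdjpx}, and then to organize the resulting sums by inserting auxiliary variables $z_1,\ldots,z_N$ exactly as in Lemma~\ref{ThetaAlphaK}. Identity~\eqref{EX=0Fk1NAlpha1N.0} is already recorded in \eqref{ThmFk1NAlpha1N.2}--\eqref{ThmFk1NAlpha1N.3}, so the entire content of the theorem is the structural description of $W_{1,2}$.

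First I would unwind $W_{1,2}$ explicitly. By construction, $(q;q)_\infty^{\chi(X)}\cdot W_{1,2}$ is a sum, indexed by generalized partitions $\la^{(1)},\ldots,\la^{(N)}$ with $\ell(\la^{(i)})=k_i+2$ and $|\la^{(i)}|=0$, of the Case~B contribution $C_2$ from the proof of Theorem~\ref{ThmJJkAlpha}. The residual trace appearing in $C_2$, namely $\Tr\,q^\fd\prod_{i=N_1+1}^N \fa_{\la^{(i)}-\w\la^{(i)}}\bigl((1_X-K_X)^{\sum_n p^{(i)}_n}\alpha_i\bigr)/(\la^{(i)}-\w\la^{(i)})!$, is then evaluated by Lemma~\ref{EX=0Trqdjpx}: the hypothesis $e_X\alpha_i=0$ extends to $e_X(1_X-K_X)^r\alpha_i=0$ for every $r\ge 0$, so the residual trace is a linear combination of expressions~\eqref{EX=0Trqdjpx.02}, which involve only $\langle 1_X,\prod_{j\in\pi_i}\alpha_j\rangle$ pairings (the $\langle e_X^{m_i},\cdot\rangle$ pairings with $m_i\ge 1$ that arise in Lemma~\ref{Trqdjpx} are all killed) and $q$-factors of the form $\prod_{j=1}^{\ell}(-n_j')^{w_j}q^{n_j'p_j}/(1-q^{n_j'})^{w_j}$.

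Next I would convert these discrete sums into ordered sums. For each $i$, insert $z_i^{|\la^{(i)}|}$ and take $\Coe_{z_i^0}$ to encode the constraint $|\la^{(i)}|=0$; this frees the multiplicities $m_n^{(i)},\w m_n^{(i)}$ from the linear restriction $\sum_n n(m_n^{(i)}-\w m_n^{(i)})=0$. In the resulting generating function, each positive part $n$ of $\la^{(i)}$ of multiplicity $m$ contributes a factor $(qz_i)^{nm}/[(1-q^n)^m\,m!]$, each negative part $-n$ of multiplicity $\w m$ contributes $z_i^{-n\w m}/[(1-q^n)^{\w m}\,\w m!]$, and each cross-pairing supplied by Lemma~\ref{EX=0Trqdjpx}, which matches a positive part of some $\la^{(i)}$ with a negative part of some $\la^{(i')}$ at common value $n$, contributes a factor involving $(z_iz_{i'}^{-1})^{\pm n}$ together with the bracket coefficient $-n$. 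Combined, these precisely match the shape of the factors $(-n_j)^{w_j}q^{n_jp_j}f_j(z_1,\ldots,z_N)^{n_j}/(1-q^{n_j})^{w_j}$ advertised after Theorem~\ref{Intro-KXEX=0}. Passing the unordered products $\prod_n(1/m!)x_n^m$ to ordered sums $\sum_{n_1>\cdots>n_\ell}$ via the same bookkeeping identity used in Lemma~\ref{ThetaAlphaK} then expresses $(q;q)_\infty^{\chi(X)}\cdot W_{1,2}$ as a linear combination of the $\Coe_{z_1^0\cdots z_N^0}$ of generalized multiple $q$-zeta values.

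The weight inequality is inherited from \eqref{ThmJJkAlpha.0}: the bound $\sum_j w_j<\sum_i\ell(\la^{(i)})=\sum_i(k_i+2)$ is preserved by the reorganization, so every generalized multiple $q$-zeta value that appears has weight strictly less than $\sum_i(k_i+2)$. The coefficients depend only on the combinatorics of $\la^{(i)},\mu^{(i,s)},\w\mu^{(i,t)}$ and on the partition $\pi$ of $\{1,\ldots,N\}$, and in particular are independent of $q$. I expect the main obstacle to be this last step: after summing out the auxiliary partitions $\mu^{(i,s)},\w\mu^{(i,t)}$, one must verify that at each pairing level $n_j$ the combined $z$-factor collapses into a single monomial in $z_1^{\pm 1},\ldots,z_N^{\pm 1}$ of the form $f_j(z_1,\ldots,z_N)^{n_j}$, so that the output is a bona fide generalized multiple $q$-zeta value rather than a more elaborate residue of a formal power series. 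This is a finite verification based on the exponential-to-ordered-sum identity used in Lemma~\ref{ThetaAlphaK}, but the multi-index bookkeeping is intricate.
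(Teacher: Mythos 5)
Your proposal is correct and follows essentially the same route as the paper: identify $W_{1,2}$ with the Case~B contributions, evaluate the residual trace with Lemma~\ref{EX=0Trqdjpx} in place of Lemma~\ref{Trqdjpx}, insert the variables $z_1,\ldots,z_N$ to free the constraint $|\la^{(i)}|=0$, and convert unordered products to ordered sums as in Lemma~\ref{ThetaAlphaK}. The one step you flag as the main obstacle is resolved in the paper by grouping the $N$-tuples $(\mu^{(1)},\ldots,\mu^{(N)})$ into types $\mathfrak T$ with the distinct values $n_1',\ldots,n_\ell'$ allowed to vary: since each $n_j'$ occurs with a fixed pattern of signs distributed among the $\mu^{(i)}$, its total $z$-exponent is $n_j'$ times a fixed integer vector, so the combined factor is indeed a monomial $f_j(z_1,\ldots,z_N)^{n_j}$ and the sum over distinct $n_1,\ldots,n_\ell$ splits into $\ell!$ genuine (generalized) multiple $q$-zeta values.
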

\begin{proof}
To simplify the signs, we will assume that $|\alpha_i|$ is even for every $i$.
Recall that $\widetilde F^{\alpha_1, \ldots, \alpha_N}_{k_1, \ldots, k_N}(q)$
is defined in \eqref{DefWideF}, and that \eqref{EX=0Fk1NAlpha1N.0} is 
just \eqref{ThmFk1NAlpha1N.3}. From the proofs of \eqref{ThmFk1NAlpha1N.3}
and Theorem~\ref{ThmJJkAlpha}, we see that the lower weight term $W_{1,2}$
in \eqref{EX=0Fk1NAlpha1N.0} is the contributions of Case B 
in the proof of Theorem~\ref{ThmJJkAlpha} to the right-hand-side of \eqref{DefWideF}.
By \eqref{ThmJJkAlpha.3} and Lemma~\ref{ThetaAlphaK},
up to a re-ordering of the set $\{1, \ldots, N\}$,
these contributions are of the following form, denoted by $C_{2, N-N_1}$:
$$   
\Coe_{z_1^0 \cdots z_{N_1}^0} \left (\prod_{i=1}^{N_1} \Theta^{\alpha_i}_{k_i}(q, z_i) \right ) 
$$
$$
\cdot (-1)^{N-N_1} \cdot \sum_{\ell(\la^{(i)}) = k_i +2, |\la^{(i)}|=0 
     \atop N_1+1 \le i \le N} \sum_{\w \la^{(i)} < \la^{(i)} \atop N_1+1 \le i \le N}
\prod_{N_1+1 \le i \le N \atop n \ge 1} \left ( {(-1)^{p^{(i)}_{n}} \over p^{(i)}_{n}!} 
    {q^{n p^{(i)}_{n}} \over (1-q^n)^{p^{(i)}_{n}}}  {1 \over \w p^{(i)}_{n}!} 
    {1 \over (1-q^n)^{\w p^{(i)}_{n}}} \right )
$$
\begin{eqnarray*}    
\cdot     \Tr \, q^\fd \prod_{i=N_1+1}^N 
     {\fa_{\la^{(i)} - \w \la^{(i)}}
\big ((1_X - K_X)^{\sum_{n \ge 1} p^{(i)}_{n}}\alpha_i \big ) \over
     \big (\la^{(i)}-\w \la^{(i)} \big )!}
\end{eqnarray*}
where $0 \le N_1 < N$, $\w \la^{(i)}$ is denoted by $\big ( \cdots (-n)^{\w p^{(i)}_n} \cdots 
(-1)^{\w p^{(i)}_1} 1^{p^{(i)}_1} \cdots n^{p^{(i)}_n} \cdots \big )$,
and $\sum_{i = N_1+1}^N |\la^{(i)} - \w \la^{(i)}| = 0$.
We may let $N_1 = 0$. Put $\mu^{(i)} = \la^{(i)}-\w \la^{(i)}$. Then $C_{2, N}$ is
\begin{eqnarray}       \label{EX=0Fk1NAlpha1N.1}
(-1)^{N} \cdot \sum_{\ell(\w \la^{(i)}) + \ell(\mu^{(i)}) = k_i +2 
  \atop {|\w \la^{(i)}| + |\mu^{(i)}|=0 \atop 1 \le i \le N}} 
\prod_{1 \le i \le N \atop n \ge 1} \left ( {(-1)^{p^{(i)}_{n}} \over p^{(i)}_{n}!} 
    {q^{n p^{(i)}_{n}} \over (1-q^n)^{p^{(i)}_{n}}}  {1 \over \w p^{(i)}_{n}!} 
    {1 \over (1-q^n)^{\w p^{(i)}_{n}}} \right )  
\end{eqnarray}
\begin{eqnarray}    \label{EX=0Fk1NAlpha1N.100}
\cdot \Tr \, q^\fd \prod_{i=1}^N  {\fa_{\mu^{(i)}}
\big ((1_X - K_X)^{\sum_{n \ge 1} p^{(i)}_{n}}\alpha_i \big ) \over \mu^{(i)}!}
\end{eqnarray}
where $\mu^{(i)} \ne \emptyset$ for every $1 \le i \le N$, and $\sum_{i = 1}^N |\mu^{(i)}| = 0$. 
By Lemma~\ref{EX=0Trqdjpx}~(ii), the trace on line \eqref{EX=0Fk1NAlpha1N.100}
is a linear combination of the expressions:
\begin{eqnarray}       \label{EX=0Fk1NAlpha1N.2}      
(q; q)_\infty^{-\chi(X)} \cdot 
  \prod_{i=1}^u \left \langle 1_X, \prod_{j \in \pi_i} 
  \big ( (1_X - K_X)^{\sum_{n \ge 1} p^{(j)}_{n}}\alpha_j \big ) \right \rangle  \cdot
  \prod_{i=1}^{\ell} {(-n_i')^{w_i} q^{n_i' p_i} \over (1 - q^{n_i'})^{w_i}}  
\end{eqnarray}
where $\sum_{i=1}^\ell w_i = \sum_{i=1}^N \ell(\mu^{(i)})/2$, $0 \le p_i \le w_i$,
and the mutually distinct integers $n_1', \ldots, n_\ell'$ appear 
$w_1, \ldots, w_\ell$ times respectively as the positive parts 
(repeated with multiplicities) of $\mu^{(1)}, \ldots, \mu^{(N)}$
(so the negative parts, repeated with multiplicities, 
of $\mu^{(1)}, \ldots, \mu^{(N)}$ are $-n_1', \ldots, -n_\ell'$ with multiplicities 
$w_1, \ldots, w_\ell$ respectively).

We now fix the type of the $N$-tuple
$(\mu^{(1)}, \ldots, \mu^{(N)})$. Define $\mathfrak T$ to be the set consisting of 
all the $N$-tuples $(\w \mu^{(1)}, \ldots, \w \mu^{(N)})$ obtained from 
$(\mu^{(1)}, \ldots, \mu^{(N)})$ as follows: take $N$ mutually distinct positive integers
$n_1, \ldots, n_{\ell}$, and obtain $\w \mu^{(i)}, 1 \le i \le N$ from 
$\mu^{(i)}, 1 \le i \le N$ by replacing every part $\pm n_j'$ in $\mu^{(i)}$ by $\pm n_j$.
Denote the contribution of the type $\mathfrak T$ to $C_{2, N}$ by $C_{2, N}^{\mathfrak T}$.
Then, $C_{2, N} = \sum_{\mathfrak T} C_{2, N}^{\mathfrak T}$. Thus, to prove the statement 
about $(q; q)_\infty^{\chi(X)} \cdot W_{1,2}$ in our theorem, it remains to 
study $C_{2, N}^{\mathfrak T}$. For $1 \le i \le N$, let $\ell_{i, +}$ 
(resp. $\ell_{i, -}$) be the sum of the multiplicities of the positive 
(resp. negative) parts in $\mu^{(i)}$. Denote the parts (repeated with multiplicities) 
of $\mu^{(i)}$ by $-n_{j_{i,1}}', \ldots, -n_{j_{i, \ell_{i, -}}}', 
n_{h_{i,1}}', \ldots, n_{h_{i, \ell_{i, +}}}'$. By the definition of $\mathfrak T$,
the following data are the same for every $N$-tuple
$(\w \mu^{(1)}, \ldots, \w \mu^{(N)}) \in \mathfrak T$:
\begin{enumerate}
\item[$\bullet$] the indexes $j_{i,1}, \ldots, j_{i, \ell_{i, -}} (1 \le i \le N)$ 
up to re-ordering

\item[$\bullet$] the indexes $h_{i,1}, \ldots, h_{i, \ell_{i, +}} (1 \le i \le N)$ 
up to re-ordering

\item[$\bullet$] the partition $\{\pi_1, \ldots, \pi_u\}$ of $\{1, \ldots, N\}$ and integers 
$w_i, p_i$ in \eqref{EX=0Fk1NAlpha1N.2}

\item[$\bullet$] the coefficient of line \eqref{EX=0Fk1NAlpha1N.2} in the linear combination.
\end{enumerate}
So by \eqref{EX=0Fk1NAlpha1N.1}, \eqref{EX=0Fk1NAlpha1N.100} and \eqref{EX=0Fk1NAlpha1N.2}, 
$C_{2, N}^{\mathfrak T}$ is a linear combination of the expressions
\begin{eqnarray*} 
(q; q)_\infty^{-\chi(X)} 
  \sum_{n_1, \ldots, n_\ell > 0 \atop n_i \ne n_j \text{\rm if } i \ne j}   
  \prod_{i=1}^{\ell} {(-n_i)^{w_i} q^{n_i p_i} \over (1 - q^{n_i})^{w_i}}   
\sum_{\ell(\w \la^{(i)}) = k_i +2 - \ell_{i, +} - \ell_{i, -}
  \atop {|\w \la^{(i)}| = \sum_{r=1}^{\ell_{i, -}} n_{j_{i, r}} - 
  \sum_{r=1}^{\ell_{i, +}} n_{h_{i,r}} \atop 1 \le i \le N}}  
\end{eqnarray*}
$$
\prod_{i=1}^u \left \langle 1_X, \prod_{j \in \pi_i} 
  \big ( (1_X - K_X)^{\sum_{n \ge 1} p^{(j)}_{n}}\alpha_j \big ) \right \rangle 
\cdot \prod_{1 \le i \le N \atop n \ge 1} \left ( {(-1)^{p^{(i)}_{n}} \over p^{(i)}_{n}!} 
    {q^{n p^{(i)}_{n}} \over (1-q^n)^{p^{(i)}_{n}}}  {1 \over \w p^{(i)}_{n}!} 
    {1 \over (1-q^n)^{\w p^{(i)}_{n}}} \right ) 
$$
(we have moved the factor $(-1)^N$ into the coefficients of 
the linear combination). Inserting the variables $z_1, \ldots, z_N$, 
we conclude that $(q; q)_\infty^{\chi(X)} \cdot C_{2, N}^{\mathfrak T}$ is 
a linear combination of the coefficients of $z_1^0 \cdots z_N^0$ in the expressions
\begin{eqnarray}    \label{EX=0Fk1NAlpha1N.200}
\left ( \sum_{n_1, \ldots, n_\ell > 0 \atop n_i \ne n_j \text{\rm if } i \ne j}   
  \prod_{i=1}^{\ell} {(-n_i)^{w_i} q^{n_i p_i} \over (1 - q^{n_i})^{w_i}} 
  \cdot \prod_{1 \le i \le N} z_i^{-\sum_{r=1}^{\ell_{i, -}} n_{j_{i, r}} + 
  \sum_{r=1}^{\ell_{i, +}} n_{h_{i,r}}} \right ) 
\end{eqnarray}
\begin{eqnarray}       \label{EX=0Fk1NAlpha1N.3}
\cdot \sum_{\ell(\w \la^{(i)}) = k_i +2 - \ell_{i, +} - \ell_{i, -} \atop 1 \le i \le N} 
\prod_{i=1}^u \left \langle 1_X, \prod_{j \in \pi_i} 
  \big ( (1_X - K_X)^{\sum_{n \ge 1} p^{(j)}_{n}}\alpha_j \big ) \right \rangle 
\end{eqnarray}
\begin{eqnarray}       \label{EX=0Fk1NAlpha1N.4}
\cdot \prod_{1 \le i \le N \atop n \ge 1} \left ( {(-1)^{p^{(i)}_{n}} \over p^{(i)}_{n}!} 
    {(qz_i)^{n p^{(i)}_{n}} \over (1-q^n)^{p^{(i)}_{n}}}  {1 \over \w p^{(i)}_{n}!} 
    {z_i^{-n \w p^{(i)}_{n}} \over (1-q^n)^{\w p^{(i)}_{n}}} \right ). 
\end{eqnarray}
We claim that line \eqref{EX=0Fk1NAlpha1N.200} is the sum of $\ell!$ multiple $q$-zeta values
of weight $\sum_{i=1}^\ell w_i$. Indeed, the sum of the terms with 
$n_1 > \cdots > n_\ell$ in line \eqref{EX=0Fk1NAlpha1N.200} is equal to:
\begin{eqnarray}    \label{EllFactorial1}
& &\sum_{n_1 > \cdots > n_\ell}   
  \prod_{i=1}^{\ell} {(-n_i)^{w_i} q^{n_i p_i} \over (1 - q^{n_i})^{w_i}} 
  \cdot \prod_{1 \le i \le N} z_i^{-\sum_{r=1}^{\ell_{i, -}} n_{j_{i, r}} + 
  \sum_{r=1}^{\ell_{i, +}} n_{h_{i,r}}}     \nonumber    \\
&=&\sum_{n_1 > \cdots > n_\ell} \prod_{i=1}^{\ell} 
  {(-n_i)^{w_i} q^{n_i p_i} f_i(z_1, \ldots, z_N)^{n_i} \over (1 - q^{n_i})^{w_i}} 
\end{eqnarray}
where each $f_i(z_1, \ldots, z_N)$ is a suitable monomial of $z_1^{\pm 1}, \ldots, z_N^{\pm 1}$.
So line \eqref{EX=0Fk1NAlpha1N.200} is the sum of the following $\ell!$ multiple $q$-zeta values:
\begin{eqnarray}    \label{EllFactorial2}
\sum_{n_1 > \cdots > n_\ell} \prod_{i=1}^{\ell} {(-n_i)^{w_{\sigma(i)}} q^{n_i p_{\sigma(i)}} 
f_{\sigma(i)}(z_1, \ldots, z_N)^{n_i} \over (1 - q^{n_i})^{w_{\sigma(i)}}} 
\end{eqnarray}
where $\sigma$ runs in the symmetric group $S_\ell$. Furthermore,
as in the proof of Lemma~\ref{ThetaAlphaK}, the product of lines \eqref{EX=0Fk1NAlpha1N.3} 
and \eqref{EX=0Fk1NAlpha1N.4} is equal to
\begin{eqnarray*}        
\sum_{a_i, b_i \ge 0; s_1^{(i)}, \ldots, s_{a_i}^{(i)}, t_1^{(i)}, \ldots, t_{b_i}^{(i)} \ge 1
  \atop {\sum_{r=1}^{a_i} s_r^{(i)} + \sum_{r=1}^{b_i} t_r^{(i)} 
    = k_i +2 - \ell_{i, +} - \ell_{i, -} \atop 1 \le i \le N}}
\prod_{i=1}^u \left \langle 1_X, \prod_{j \in \pi_i} 
  \big ( (1_X - K_X)^{\sum_{r=1}^{a_j} s_r^{(j)}}\alpha_j \big ) \right \rangle  
\cdot \prod_{1 \le r \le a_i \atop 1 \le i \le N} {(-1)^{s_r^{(i)}} \over s_r^{(i)}!}   
\end{eqnarray*}
\begin{eqnarray*} 
\cdot \prod_{1 \le r \le b_i \atop 1 \le i \le N} {1 \over t_r^{(i)}!} 
\cdot \prod_{i=1}^N \left ( \sum_{n_1 > \cdots > n_{a_i}} 
   \prod_{r= 1}^{a_i} {(q z_i)^{n_r s_r^{(i)}} \over (1-q^{n_r})^{s_r^{(i)}}}
\cdot \sum_{m_1 > \cdots > m_{b_i}} \prod_{r= 1}^{b_i} {z_i^{-m_r t_r^{(i)}} 
   \over (1-q^{m_r})^{t_r^{(i)}}} \right ). 
\end{eqnarray*}
Combining with lines \eqref{EX=0Fk1NAlpha1N.200} and \eqref{EllFactorial2}, 
we see that $(q; q)_\infty^{\chi(X)} \cdot C_{2, N}^{\mathfrak T}$ 
is a linear combination of the coefficients of $z_1^0 \cdots z_N^0$ in 
some multiple $q$-zeta values of weights
\begin{eqnarray*}
    w 
&:=&\sum_{i=1}^{\ell} w_i + \sum_{i=1}^N \left ( \sum_{r= 1}^{a_i} s_r^{(i)}
       + \sum_{r= 1}^{b_i} t_r^{(i)} \right ) \\
&=& \sum_{i=1}^{\ell} w_i + \sum_{i=1}^N (k_i +2 - \ell_{i, +} - \ell_{i, -}).
\end{eqnarray*}
Note from \eqref{EX=0Fk1NAlpha1N.2} that $\sum_{i=1}^{\ell} w_i 
= \sum_{i=1}^N \ell(\mu^{(i)})/2 = \sum_{i=1}^N (\ell_{i, +} + \ell_{i, -})/2$.
So we have $w < \sum_{i=1}^N (k_i +2)$. This completes the proof of our theorem.
\end{proof}

We will end this section with three propositions about $F^\alpha_{k}(q)$,
which provide some insight into the lower weight term $W_1$ in Theorem~\ref{ThmFk1NAlpha1N}.
Proposition~\ref{PropFq0Alpha} deals with $F^{\alpha}_{0}(q)$ for an arbitrary $\alpha \in H^*(X)$.
Proposition~\ref{Propch1Alpha} calculates $F^{\alpha}_{1}(q)$ by assuming $e_X \alpha = 0$.
Proposition~\ref{ThmFkAlpha} computes $F^{\alpha}_{k}(q), k \ge 2$ by assuming 
$e_X \alpha = K_X \alpha = 0$. 

\begin{proposition}  \label{PropFq0Alpha}
The generating series $F^{\alpha}_{0}(q)$ is equal to
\begin{eqnarray}   \label{PropFq0Alpha.0}
(q; q)_\infty^{-\chi(X)} \cdot \langle 1_X-K_X, \alpha \rangle
  \cdot \sum_{n} { q^n \over (1-q^n)^2} 
+ (q; q)_\infty^{-\chi(X)} \cdot \langle e_X, \alpha\rangle 
  \cdot \sum_{n} {nq^n \over 1-q^n}.   \quad
\end{eqnarray}
\end{proposition}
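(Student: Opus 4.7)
The plan is to reduce the computation of $F^\alpha_0(q)$ to a trace on $\fock$ via Lemma~\ref{FtoW}, expand $\fG_0(\alpha)$ using Theorem~\ref{char_th}, and then evaluate the resulting single-partition traces via Remark~\ref{Rmk-914}. First, Lemma~\ref{FtoW} gives $F^\alpha_0(q) = \Tr \, q^\fd \, W(\fL_1, z) \, \fG_0(\alpha)$. Applying Theorem~\ref{char_th} with $k = 0$: the only generalized partitions with $\ell(\la) = 2$ and $|\la| = 0$ are $\la = (-n, n)$ for $n \ge 1$ (each with $\la! = 1$), so the first sum contributes the operator $-\sum_{n \ge 1} \fa_{(-n,n)}(\alpha)$; the sums indexed by $\ell(\la) = 0$ involve only the empty partition and produce scalar operators; and the $\ell(\la) = 1$ sum is empty since a single nonzero part cannot sum to~$0$. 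These scalar contributions must cancel overall, since $\fG_0(\alpha)\vac = G_0(\alpha, 0) = 0$ while each $\fa_{(-n,n)}(\alpha)\vac = 0$ for $n \ge 1$.

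Consequently, the trace reduces to
$$F^\alpha_0(q) = -\sum_{n \ge 1} \Tr \, q^\fd \, W(\fL_1, z) \, \fa_{(-n,n)}(\alpha).$$
For each fixed $n \ge 1$, I would apply Remark~\ref{Rmk-914} with $\la = (-n, n)$, so that $m_n = \w m_n = 1$ while all other multiplicities vanish. The leading contribution given by the remark evaluates to
$$-(q;q)_\infty^{-\chi(X)} \, \langle 1_X - K_X, \alpha \rangle \, \frac{q^n}{(1-q^n)^2},$$
and the lower-weight contribution, coming from the unique index satisfying $m_n \w m_n \ge 1$, evaluates to
$$(q;q)_\infty^{-\chi(X)} \, \langle e_X, \alpha \rangle \, \frac{n q^n}{1-q^n}.$$
Summing over $n \ge 1$ together with the overall prefactor $-1$ inherited from Theorem~\ref{char_th} assembles the two series displayed in \eqref{PropFq0Alpha.0}.

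The main obstacle is the careful bookkeeping of signs, especially for the $\langle e_X, \alpha\rangle$ contribution obtained from the lower-weight piece of Remark~\ref{Rmk-914}, where both the super-sign conventions in the trace and the sign $(-n)$ produced by each Heisenberg commutator must be tracked. A secondary technical point is the justification that the empty-partition (scalar) terms of Theorem~\ref{char_th} do not contaminate the answer: any such contribution is a $q$-independent constant multiple of $(q;q)_\infty^{-\chi(X)}$, which cannot match either of the genuinely $q$-dependent series in \eqref{PropFq0Alpha.0}, and must therefore vanish by the annihilation argument above.
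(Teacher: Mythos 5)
Your route is exactly the paper's: reduce to a trace via Lemma~\ref{FtoW}, identify $\fG_0(\alpha)=-\sum_{n>0}(\fa_{-n}\fa_n)(\alpha)$ from Theorem~\ref{char_th}, and evaluate each $\Tr\,q^\fd\,W(\fL_1,z)\,\fa_{((-n)n)}(\alpha)$ via Remark~\ref{Rmk-914}. Your justification that the $\ell(\la)=0$ scalar terms of Theorem~\ref{char_th} must cancel (by applying $\fG_0(\alpha)$ to the vacuum) is a nice point the paper leaves implicit.

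There is, however, one concrete problem: with the two intermediate values you quote, the final assembly does not produce \eqref{PropFq0Alpha.0}. You take the $\langle e_X,\alpha\rangle$-contribution of $\Tr\,q^\fd\,W(\fL_1,z)\,\fa_{((-n)n)}(\alpha)$ to be $+(q;q)_\infty^{-\chi(X)}\langle e_X,\alpha\rangle\,nq^n/(1-q^n)$; multiplying by the overall $-1$ from Theorem~\ref{char_th} and summing then yields $-\langle e_X,\alpha\rangle\sum_n nq^n/(1-q^n)$, the opposite of the sign in the Proposition. The correct value of that contribution is $(q;q)_\infty^{-\chi(X)}\langle e_X,\alpha\rangle\,(-n)q^n/(1-q^n)$. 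To see this, note that in the expansion of $\Tr\,q^\fd\,W(\fL_1,z)\,\fa_{((-n)n)}(\alpha)$ coming from Lemmas~\ref{LemmaTrace} and~\ref{20150806957}, the term in which all $\mu^{(i,s)}$ and $\w\mu^{(i,t)}$ are empty contributes exactly $\Tr\,q^\fd\,\fa_{((-n)n)}(\alpha)$, which by \eqref{Trqdjpx.2} equals $(q;q)_\infty^{-\chi(X)}\langle e_X,\alpha\rangle\,(-n)q^n/(1-q^n)$; the only other admissible term ($\mu=(n)$, $\w\mu=((-n))$) produces the $\langle 1_X-K_X,\alpha\rangle$ piece you already have. (The displayed formula in Remark~\ref{Rmk-914} writes $n_1q^{n_1}/(1-q^{n_1})$ where consistency with \eqref{Trqdjpx.2} requires $(-n_1)q^{n_1}/(1-q^{n_1})$, so you have inherited a sign slip from the remark rather than introduced one.) A reliable sanity check is $\alpha=1_X$: then $G_0(1_X,n)=n$, so $F^{1_X}_0(q)=q\frac{{\rm d}}{{\rm d}q}(q;q)_\infty^{-\chi(X)}=(q;q)_\infty^{-\chi(X)}\chi(X)\sum_n nq^n/(1-q^n)$, which forces the plus sign on the $\langle e_X,\alpha\rangle$ series in \eqref{PropFq0Alpha.0} and hence the minus sign in the intermediate trace. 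With that single correction your argument closes.
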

\begin{proof}
By Lemma~\ref{FtoW}, $F_k^\alpha(q) = \Tr \, q^\fd \, W(\fL_1, z) \, \fG_k(\alpha)$.
By Theorem~\ref{char_th}, we have 
$\mathfrak G_0(\alpha) = -\sum_{n > 0} (\fa_{-n} \fa_n)(\alpha)$.
Now \eqref{PropFq0Alpha.0} follows from Remark~\ref{Rmk-914}.
\end{proof}

\begin{remark}  \label{RmkF01X}
By \eqref{PropFq0Alpha.0}, $\displaystyle{F^{1_X}_{0}(q) 
= (q; q)_\infty^{-\chi(X)} \cdot \chi(X) \cdot \sum_{n} {nq^n \over 1-q^n}
= q {{\rm d} \over {\rm d}q} (q; q)_\infty^{-\chi(X)}}$.
\end{remark}

\begin{proposition}   \label{Propch1Alpha}
Let $\alpha \in H^*(X)$ be a homogeneous class satisfying $e_X \alpha = 0$. Then, 
the generating series $F_1^\alpha(q)$ is the coefficient of $z^0$ in
$$ 
(q; q)_\infty^{-\chi(X)} \cdot {\langle K_X - K_X^2, \alpha \rangle \over 2} \cdot 
$$
$$
\cdot \left ( \sum_{n} {(n-1)q^n \over (1 - q^n)^2} 
+ \sum_n {(qz)^{n} \over 1-q^n} 
\cdot \left (\sum_m {z^{-2m} \over (1-q^m)^2} + 2 \sum_{m_1 > m_2} 
{z^{-m_1} \over 1-q^{m_1}} {z^{-m_2} \over 1-q^{m_2}} \right ) \right ).
$$
\end{proposition}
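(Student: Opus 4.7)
The plan is to expand $\fG_1(\alpha)$ via Theorem~\ref{char_th} with $k=1$, substitute into $F_1^\alpha(q) = \Tr\, q^\fd\, W(\fL_1, z)\, \fG_1(\alpha)$ from Lemma~\ref{FtoW}, and evaluate each resulting trace by Remark~\ref{Rmk-914}. Since $e_X\alpha = 0$, the middle sum of Theorem~\ref{char_th} vanishes, while the two groups with $\ell(\la) = 1$ and $|\la|=0$ are empty. What remains is
\begin{equation*}
\fG_1(\alpha) = -\sum_{\ell(\la)=3,\,|\la|=0} \frac{1}{\la!}\, \fa_\la(\alpha) \;+\; \sum_{n>0} g_{1,(-n,n)}\, \fa_{(-n,n)}(K_X\alpha),
\end{equation*}
where $g_{1,(-n,n)}$ is the universal constant of Theorem~\ref{char_th}, equal to $(1-n)/2$ by \cite{LQW2}.

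For every $\la$ of length $3$ with $|\la|=0$ arising in the first sum, one checks directly that there is no $n_1 \ge 1$ with $m_{n_1}\w m_{n_1} \ge 1$ (each part of $\la$ has only one ``sign'' represented), so the $\langle e_X, \alpha\rangle$-correction in Remark~\ref{Rmk-914} drops out vacuously. For $\la = (-n,n)$ in the second sum, the correction is proportional to $\langle e_X, K_X\alpha\rangle = 0$ since $e_X K_X \in H^6(X) = 0$. Hence only the leading formula of Remark~\ref{Rmk-914} contributes in each case.

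I would then organize the generalized partitions $\la$ with $\ell(\la)=3,\, |\la|=0$ into four families: $((-a)^2, 2a)$ and $((-2a), a^2)$ for $a > 0$, and $(-a, -b, a+b)$ and $(-a-b, a, b)$ for $a > b > 0$. In each family the first member shares its $q$-factor with the second but carries pairing $\langle 1_X - K_X, \alpha\rangle$ rather than $\langle(1_X - K_X)^2, \alpha\rangle$; after incorporating the factorials $\la!$ and the overall sign $-1$ from the first sum of $\fG_1(\alpha)$, their contributions combine via $(1_X - K_X) - (1_X - K_X)^2 = K_X - K_X^2$ into $\tfrac12\langle K_X - K_X^2, \alpha\rangle$ times the common $q$-factor. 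Summing over $a > 0$ and $a > b > 0$ yields
\begin{equation*}
(q;q)_\infty^{-\chi(X)}\frac{\langle K_X-K_X^2,\alpha\rangle}{2}\left[\sum_m \frac{q^{2m}}{(1-q^{2m})(1-q^m)^2} + 2\sum_{m_1>m_2}\frac{q^{m_1+m_2}}{(1-q^{m_1+m_2})(1-q^{m_1})(1-q^{m_2})}\right],
\end{equation*}
which is exactly $(q;q)_\infty^{-\chi(X)}\cdot \tfrac12\langle K_X-K_X^2,\alpha\rangle$ times $\Coe_{z^0}$ of the $z$-dependent factor $\sum_n (qz)^n/(1-q^n)\cdot[\cdots]$ in the proposition ($\Coe_{z^0}$ enforcing the diagonal relations $n=2m$ and $n=m_1+m_2$).

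Finally, for the $K_X\alpha$-sum, Remark~\ref{Rmk-914} with $\la = (-n, n)$ produces $-(q;q)_\infty^{-\chi(X)}\langle K_X - K_X^2, \alpha\rangle\cdot q^n/(1-q^n)^2$; multiplying by $g_{1,(-n,n)} = (1-n)/2$ and summing over $n > 0$ contributes the $z$-independent term $(q;q)_\infty^{-\chi(X)}\tfrac12\langle K_X-K_X^2,\alpha\rangle\sum_n (n-1)q^n/(1-q^n)^2$. Adding the two contributions (and noting that the $z$-independent piece equals its own $\Coe_{z^0}$) produces the proposition's formula. The main obstacle is the pairwise cancellation among the four families of $\ell(\la)=3$ partitions that converts the pairings $\langle(1_X - K_X)^j, \alpha\rangle$ into the single combination $\langle K_X - K_X^2, \alpha\rangle$; this is precisely what forces $F_1^\alpha(q) = 0$ when $|\alpha|=4$, and executing the bookkeeping of factorials, signs, and $q$-factors uniformly across the four types requires care, as does invoking the correct value of the universal constant $g_{1,(-n,n)}$ from \cite{LQW2}.
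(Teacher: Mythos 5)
Your proposal is correct and follows essentially the same route as the paper: both decompose $\fG_1(\alpha)$ via Theorem~\ref{char_th} into the $\ell(\la)=3$ sum plus $-\sum_{n>0}\frac{n-1}{2}(\fa_{-n}\fa_n)(K_X\alpha)$, and evaluate the traces through Lemma~\ref{FtoW} and Remark~\ref{Rmk-914}. The only difference is cosmetic: where the paper cites $\Coe_{z^0}\Theta^{\alpha}_1(q,z)$ and the proof of Theorem~\ref{ThmFk1NAlpha1N} for the weight-$3$ part, you expand that coefficient by hand via the four families of length-$3$ generalized partitions, which is exactly the computation the paper's phrase ``expanding $\Coe_{z^0}\Theta^{\alpha}_1(q,z)$'' refers to.
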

\begin{proof}
We have $F_1^\alpha(q) = \Tr \, q^\fd \, W(\fL_1, z) \, \fG_1(\alpha)$.
It is known that
\begin{eqnarray}  \label{char_thK=1.0}
  \mathfrak G_1(\alpha) 
= -\sum_{\ell(\lambda) = 3, |\lambda|=0}
  {\mathfrak a_{\lambda}(\alpha) \over \lambda!} - \sum_{n>0} 
  \frac{n-1}2 (\mathfrak a_{-n} \mathfrak a_n)(K_X \alpha).      
\end{eqnarray}
Applying Remark~\ref{Rmk-914} to $\displaystyle{-\sum_{n>0} 
\frac{n-1}2 \Tr \, q^\fd \, W(\fL_1, z) \, 
(\mathfrak a_{-n} \mathfrak a_n)(K_X \alpha)}$ yields the weight-$2$ terms in our proposition.
Again by Remark~\ref{Rmk-914}, the trace $\displaystyle{\Tr \, q^\fd \, W(\fL_1, z) \, 
{\mathfrak a_{\lambda}(\alpha) \over \lambda!}}$ with $\ell(\lambda) = 3$ and $|\lambda|=0$
contains only weight-$3$ terms (i.e., does not contain lower weight terms).
So the proof of Theorem~\ref{ThmFk1NAlpha1N} shows that 
$$
-\sum_{\ell(\lambda) = 3, |\lambda|=0} \Tr \, q^\fd \, W(\fL_1, z) \, 
  {\mathfrak a_{\lambda}(\alpha) \over \lambda!} 
= (q; q)_\infty^{-\chi(X)} \cdot \Coe_{z^0} \Theta^{\alpha}_{1}(q, z).
$$
Expanding $\Coe_{z^0} \Theta^{\alpha}_{1}(q, z)$ yields the weight-$3$ terms in our proposition.
\end{proof}

\begin{proposition}   \label{ThmFkAlpha}
Let $\alpha \in H^*(X)$ be homogeneous satisfying $K_X \alpha = e_X \alpha = 0$.
\begin{enumerate}
\item[{\rm (i)}] If $|\alpha| < 4$, then $F^\alpha_{k}(q) = 0$ for every $k \ge 0$;

\item[{\rm (ii)}] Let $|\alpha| = 4$ and $k \ge 0$. 
Then, $F^\alpha_{k}(q)$ is the coefficient of $z^0$ in
$$
-(q; q)_\infty^{-\chi(X)} \cdot \langle 1_X, \alpha \rangle \cdot
\sum_{a, s_1, \ldots, s_a, b, t_1, \ldots, t_b \ge 1 
\atop \sum_{i=1}^a s_i + \sum_{j = 1}^b t_j = k+2} 
\prod_{i=1}^a {(-1)^{s_i} \over s_i!} \cdot \prod_{j=1}^b {1 \over t_j!} 
$$
\begin{eqnarray}    \label{ThmFkAlpha.0}
\cdot \sum_{n_1 > \cdots > n_a} \prod_{i= 1}^a {(q z)^{n_i s_i} \over (1-q^{n_i})^{s_i}}
\cdot \sum_{m_1 > \cdots > m_b} \prod_{j= 1}^b {z^{-m_jt_j} \over (1-q^{m_j})^{t_j}}.
\end{eqnarray}
In particular, if $2 \not |k$, then $F^\alpha_{k}(q) = 0$.
\end{enumerate}
\end{proposition}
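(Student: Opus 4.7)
The plan is to follow the same pattern as the proofs of Propositions~\ref{PropFq0Alpha} and~\ref{Propch1Alpha}, now for arbitrary $k$, and then to add a sign-cancellation argument for the parity claim in~(ii). Under the hypotheses $K_X\alpha = e_X\alpha = 0$ (which also force $K_X^2\alpha = 0$), Theorem~\ref{char_th} collapses to the single summand
$$
\fG_k(\alpha) = -\sum_{\ell(\lambda)=k+2,\,|\lambda|=0}\frac{\fa_\lambda(\alpha)}{\lambda!},
$$
and Lemma~\ref{FtoW} gives $F^\alpha_k(q) = \Tr q^\fd W(\fL_1, z)\fG_k(\alpha)$.

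I would then apply Remark~\ref{Rmk-914} to each trace $\Tr q^\fd W(\fL_1, z)\fa_\lambda(\alpha)/\lambda!$: the second ``lower weight'' summand there is proportional to $\langle e_X, \alpha\rangle = \int_X e_X\alpha$, which vanishes by hypothesis (or automatically by degree when $|\alpha| = 4$), while in the first ``leading'' summand the factor $\langle (1_X - K_X)^{\sum m_n}, \alpha\rangle$ collapses to $\langle 1_X, \alpha\rangle$ thanks to $K_X\alpha = 0$. Summing over the generalized partitions $\lambda$ and invoking Lemma~\ref{ThetaAlphaK} repackages the result into $F^\alpha_k(q) = (q;q)_\infty^{-\chi(X)}\cdot\Coe_{z^0}\Theta^\alpha_k(q,z)$, and substituting $K_X\alpha = 0$ in the defining formula~\eqref{ThetaAlphaK.02} produces exactly the right-hand side of~\eqref{ThmFkAlpha.0}. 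Part~(i) is then immediate: for $|\alpha| < 4$ we have $\langle 1_X, \alpha\rangle = \int_X\alpha = 0$ by degree, so $F^\alpha_k(q) \equiv 0$.

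For the parity statement in part~(ii), I would run a sign-involution on the sum over $\lambda$ in~\eqref{ThetaAlphaK.01}: send $\lambda = (\cdots(-n)^{\widetilde m_n}\cdots 1^{m_1}\cdots n^{m_n}\cdots)$ to its negation $-\lambda$ obtained by swapping $m_n \leftrightarrow \widetilde m_n$. Because $|\lambda| = 0$, the $q$-powers $q^{n m_n}$ in the summand are exchanged with the $q$-powers coming from $\widetilde m_n$, so the combined $q$-factor is preserved; the sign picks up a net factor of $(-1)^{\sum_n(\widetilde m_n - m_n)} = (-1)^{\ell(\lambda) - 2\sum_n m_n} = (-1)^{k+2} = (-1)^k$. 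For odd $k$, each pair $\{\lambda, -\lambda\}$ with $\lambda \neq -\lambda$ therefore contributes with factor $1 + (-1)^k = 0$, and a fixed point $\lambda = -\lambda$ would force $\ell(\lambda) = 2\sum_n m_n$ to be even, contradicting $\ell(\lambda) = k + 2$; hence the involution has no fixed points and the whole sum vanishes, giving $F^\alpha_k(q) = 0$.

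The main technical obstacle I anticipate is the careful verification that, for $N = 1$ and under our specific vanishings, the lower weight part $\widetilde W$ of Theorem~\ref{ThmJJkAlpha} really does reduce to the single $\langle e_X, \alpha\rangle$-proportional summand explicitly written in Remark~\ref{Rmk-914}, with no leftover contribution hiding inside the various linear combinations in the proof of Theorem~\ref{ThmJJkAlpha}. Once this is granted, the rest is routine bookkeeping together with the one-line sign cancellation above.
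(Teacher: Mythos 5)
Your proposal is correct and follows essentially the same route as the paper: reduce $\fG_k(\alpha)$ to $-\sum_{\ell(\lambda)=k+2,|\lambda|=0}\fa_\lambda(\alpha)/\lambda!$ via Theorem~\ref{char_th}, use Remark~\ref{Rmk-914} together with the argument of Theorem~\ref{ThmFk1NAlpha1N} to obtain $F^\alpha_k(q)=(q;q)_\infty^{-\chi(X)}\cdot\Coe_{z^0}\Theta^\alpha_k(q,z)$, and read off (i) and (ii) from \eqref{ThetaAlphaK.02}. Your parity argument via the fixed-point-free involution $\lambda\mapsto-\lambda$ is the same antisymmetry the paper exploits after rewriting \eqref{ThmFkAlpha.0} in the symmetric form involving $qz^{2}$ and $qz^{-2}$, just applied before the reindexing of Lemma~\ref{ThetaAlphaK} rather than after.
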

\begin{proof}
Since $K_X \alpha = e_X \alpha = 0$, we conclude from Theorem~\ref{char_th} that 
\begin{eqnarray*}
\mathfrak G_k(\alpha) = 
-\sum_{\ell(\lambda) = k+2, |\lambda|=0} {\mathfrak a_{\lambda}(\alpha) \over \lambda!}. 
\end{eqnarray*}
As in the proof of Proposition~\ref{Propch1Alpha}, 
Remark~\ref{Rmk-914} and the proof of Theorem~\ref{ThmFk1NAlpha1N} yield
\begin{eqnarray*}    
F^{\alpha}_{k}(q) =
(q; q)_\infty^{-\chi(X)} \cdot \Coe_{z^0} \Theta^{\alpha}_{k}(q, z).
\end{eqnarray*}
By the definition of $\Theta^{\alpha}_{k}(q, z)$ in \eqref{ThetaAlphaK.02}, 
we see that (i) holds and that our formula for $F^\alpha_{k}(q)$ with 
$|\alpha| = 4$ and $k \ge 0$ holds.
Note that line \eqref{ThmFkAlpha.0} can be rewritten as
$$
\sum_{n_1 > \cdots > n_a} \prod_{i= 1}^a {(q z^{2})^{n_i s_i/2} \over (1-q^{n_i})^{s_i}}
\cdot \sum_{m_1 > \cdots > m_b} \prod_{j= 1}^b {(q z^{-2})^{m_j t_j/2} \over (1-q^{m_j})^{t_j}}.
$$
Therefore, if $|\alpha| = 4$ and $2 \nmid k$, then the role of $a, s_1, \ldots, s_a$ and 
the role of $b, t_1, \ldots, t_b$ in the above formula of $F^\alpha_{k}(q)$ 
are anti-symmetric; so $F^\alpha_{k}(q) = 0$.
\end{proof}

\section{\bf The reduced series $\big \langle \ch_{k_1}^{L_1} \cdots \ch_{k_N}^{L_N} \big \rangle'$} 
\label{sect_chk}

In this section, we will prove Conjecture~\ref{OkoConj} modulo the lower weight term. 
Moreover, for abelian surfaces, we will verify Conjecture~\ref{OkoConj}.

Let $L$ be a line bundle on the smooth projective surface $X$. 
It induces the tautological rank-$n$ bundle $\Ln$ over the Hilbert scheme $\Xn$:
$$
\Ln = p_{1*}\big (p_2^*L|_{\mathcal Z_n} \big )
$$
where $\mathcal Z_n$ is the universal codimension-$2$ subscheme of $\Xn\times X$,
and $p_1$ and $p_2$ are the projections of $\Xn \times X$ to $\Xn$ and $X$ respectively.
By the Grothendieck-Riemann-Roch Theorem and \eqref{DefOfGGammaN}, we obtain
\begin{eqnarray}     \label{chLnGRR}
   \ch (\Ln) 
&=&p_{1*}(\ch({\mathcal O}_{{\mathcal Z}_n}) \cdot p_2^*\ch(L) \cdot p_2^*{\rm td}(X) )
               \nonumber    \\
&=&G(1_X, n) + G(L, n) + G(L^2/2, n).
\end{eqnarray}
Since the cohomology degree of $G_i(\alpha, n)$ is $2i+ |\alpha|$, we have
\begin{eqnarray}     \label{chLnGAlpha}
\ch_k (\Ln) = G_k(1_X, n) + G_{k-1}(L, n) + G_{k-2}(L^2/2, n).
\end{eqnarray} 

Following Okounkov \cite{Oko}, we have defined the generating series 
$\big \langle \ch_{k_1}^{L_1} \cdots \ch_{k_N}^{L_N} \big \rangle$ 
and its reduced version $\big \langle \ch_{k_1}^{L_1} \cdots \ch_{k_N}^{L_N} \big \rangle'$
in \eqref{OkoChkN.1} and \eqref{OkoChkN.2} respectively.

\begin{theorem}   \label{AbelianSur}
Let $L_1, \ldots, L_N$ be line bundles over $X$, and $k_1, \ldots, k_N \ge 0$. Then, 
\begin{eqnarray}         \label{AbelianSur.0}
\big \langle \ch_{k_1}^{L_1} \cdots \ch_{k_N}^{L_N} \big \rangle'
= \Coe_{z_1^0 \cdots z_N^0} \left (\prod_{i=1}^N \Theta^{1_X}_{k_i}(q, z_i) \right ) + W,
\end{eqnarray}
and the lower weight term $W$ is an infinite linear combination of the expressions:
\begin{eqnarray*}      
\prod_{i=1}^u \left \langle K_X^{r_i}e_X^{r_i'}, L_1^{\ell_{i, 1}}  \cdots 
   L_N^{\ell_{i, N}} \right \rangle 
\cdot \prod_{i=1}^v {q^{n_i w_ip_i} \over (1-q^{n_i})^{w_i}}
\end{eqnarray*} 
where $\sum_{i=1}^v w_i < \sum_{i=1}^N (k_i + 2)$, 
and the integers $u, v$, $r_i, r_i', \ell_{i, j} \ge 0, 
n_i > 0, w_i >0, p_i \in \{0, 1\}$ depend only on $k_1, \ldots, k_N$.
Furthermore, all the coefficients of this linear combination are 
independent of $q, L_1, \ldots, L_N$ and $X$.
\end{theorem}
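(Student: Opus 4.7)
\medskip\noindent
\textbf{Proof proposal.} The plan is to expand each $\ch_{k_i}(L_i^{[n]})$ via \eqref{chLnGAlpha}, identify $\big\langle \ch_{k_1}^{L_1}\cdots\ch_{k_N}^{L_N}\big\rangle$ with a finite sum of the series $F^{\alpha_1,\ldots,\alpha_N}_{k_1',\ldots,k_N'}(q)$, apply Theorem~\ref{ThmFk1NAlpha1N}, and finally divide by $\langle 1\rangle$ via G\"ottsche's formula $\langle 1\rangle=(q;q)_\infty^{-\chi(X)}$. Concretely, writing
$\ch_{k_i}(L_i^{[n]}) = G_{k_i}(1_X,n) + G_{k_i-1}(L_i,n) + G_{k_i-2}(L_i^2/2,n)$
(with the convention $G_j(\alpha,n)=0$ for $j<0$) and expanding the product in \eqref{OkoChkN.1}, I would obtain
\[
\big\langle \ch_{k_1}^{L_1}\cdots\ch_{k_N}^{L_N}\big\rangle
= \sum_{(k_i',\alpha_i)\in S} F^{\alpha_1,\ldots,\alpha_N}_{k_1',\ldots,k_N'}(q),
\]
where $S$ ranges over the $3^N$ tuples in which each $i$-th factor is independently one of $(k_i,1_X),(k_i-1,L_i),(k_i-2,L_i^2/2)$. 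Dividing by $(q;q)_\infty^{-\chi(X)}$ and invoking Theorem~\ref{ThmFk1NAlpha1N} on each summand cancels this factor uniformly.

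The second step is the weight accounting. Since the leading term of $\Theta^{\alpha_i}_{k_i'}(q,z_i)$ has weight $k_i'+2$, the coefficient-of-$z^0$ product has weight $\sum_i(k_i'+2)$. Among the tuples in $S$, this total is maximal exactly for the all-trivial choice $(k_i',\alpha_i)=(k_i,1_X)$, producing the desired main term
$\Coe_{z_1^0\cdots z_N^0}\bigl(\prod_i \Theta^{1_X}_{k_i}(q,z_i)\bigr)$ of weight $\sum(k_i+2)$. Every other tuple in $S$ has $\sum k_i'<\sum k_i$, so both its main term and its $W_1$-correction from Theorem~\ref{ThmFk1NAlpha1N} have weight strictly less than $\sum(k_i+2)$ and may be absorbed into $W$. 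Similarly, the $W_1$-correction for the all-$1_X$ tuple has weight strictly less than $\sum(k_i+2)$ by construction and is absorbed into $W$ too.

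The third step is to verify that $W$ has the precise form claimed. Theorem~\ref{ThmFk1NAlpha1N} already guarantees an expression of type \eqref{ThmFk1NAlpha1N.0} with pairings $\bigl\langle K_X^{r_i}e_X^{r_i'},\prod_{j\in\pi_i}\alpha_j\bigr\rangle$; substituting $\alpha_j\in\{1_X,L_j,L_j^2/2\}$ turns each such pairing into $\bigl\langle K_X^{r_i}e_X^{r_i'}, L_1^{\ell_{i,1}}\cdots L_N^{\ell_{i,N}}\bigr\rangle$ with $\ell_{i,j}\in\{0,1,2\}$. For the leading term of each non-trivial tuple in $S$, one additionally expands the pairings $\langle(1_X-K_X)^{s},\alpha_i\rangle$ appearing inside $\Theta^{\alpha_i}_{k_i'}$: on a surface only the $K_X^0$ and $K_X^1$ terms survive (higher powers vanish by dimension when $\alpha_i\in\{L_i,L_i^2/2\}$), so the coefficients again have the required form. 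Crucially, the numerical coefficients inherited from Theorem~\ref{ThmFk1NAlpha1N} are independent of $q$, $\alpha_i$, and $X$; since the substitution $\alpha_i\mapsto L_i$ or $L_i^2/2$ is universal in $L_i$, the resulting coefficients in $W$ are independent of $q,L_1,\ldots,L_N,X$, as claimed.

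The only potential obstacle is the bookkeeping of weights and the verification that all the lower-order contributions can be packaged consistently into the single form displayed in the statement. No new geometric input beyond Theorem~\ref{ThmFk1NAlpha1N}, equation \eqref{chLnGAlpha}, and G\"ottsche's theorem is required; the proof should be a short direct combination of these three ingredients.
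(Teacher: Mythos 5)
Your proposal is correct and follows essentially the same route as the paper: expand each $\ch_{k_i}(L_i^{[n]})$ via \eqref{chLnGAlpha} to write $\big\langle \ch_{k_1}^{L_1}\cdots\ch_{k_N}^{L_N}\big\rangle'$ as $(q;q)_\infty^{\chi(X)}$ times a sum of $3^N$ series $F^{\alpha_1,\ldots,\alpha_N}_{k_1',\ldots,k_N'}(q)$, isolate the all-$(1_X,k_i)$ tuple as the leading term, and apply Theorem~\ref{ThmFk1NAlpha1N} to each summand. Your extra bookkeeping on the weights and on expanding $\langle(1_X-K_X)^s,\alpha\rangle$ into the required pairings is consistent with what the paper leaves implicit.
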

\begin{proof}
We conclude from \eqref{OkoChkN.1}, \eqref{OkoChkN.2}, \eqref{chLnGAlpha} 
and \eqref{F-generating} that 
\begin{eqnarray*}
\big \langle \ch_{k_1}^L \cdots \ch_{k_N}^L \big \rangle'
= (q; q)_\infty^{\chi(X)} \cdot F^{1_X, \ldots, 1_X}_{k_1, \ldots, k_N}(q) 
  + (q; q)_\infty^{\chi(X)} \cdot A 
\end{eqnarray*}
where $A$ is the sum of the series $F^{\alpha_1, \ldots, \alpha_N}_{k_1', \ldots, k_N'}$
such that for every $1 \le i \le N$,
$$
(\alpha_i, k_i') \in \{ (1_X, k_i), (L_i, k_i-1), (L_i^2/2, k_i-2)\},
$$ 
and $\sum_{i=1}^N k_i' < \sum_{i=1}^N k_i$.
Now our result follows from Theorem~\ref{ThmFk1NAlpha1N}.
\end{proof}

\begin{theorem}   \label{KXEX=0}
Let $L_1, \ldots, L_N$ be line bundles over an abelian surface $X$, 
and $k_1, \ldots, k_N \ge 0$. Then, the lower weight term $W$ in \eqref{AbelianSur.0}
is a linear combination of the coefficients of $z_1^0 \cdots z_N^0$ in 
some multiple $q$-zeta values (with additional variables $z_1, \ldots, z_N$ inserted) 
of weights $< \sum_{i=1}^N (k_i +2)$. 
Moreover, the coefficients in this linear combination are independent of $q$.
\end{theorem}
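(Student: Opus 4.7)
The plan is to reduce Theorem~\ref{KXEX=0} to Theorem~\ref{EX=0Fk1NAlpha1N} by exploiting the vanishings $K_X = 0$ and $e_X = 0$ on an abelian surface $X$. Since $K_X\alpha = K_X^2\alpha = e_X\alpha = 0$ for every $\alpha \in H^*(X)$, Theorem~\ref{char_th} collapses on $X$ to
$$
\mathfrak G_k(\alpha) = -\sum_{\ell(\lambda) = k+2,\, |\lambda|=0} \frac{1}{\lambda!}\,\mathfrak a_\lambda(\alpha),
$$
which together with Lemma~\ref{FtoW} and \eqref{DefWideF} implies $F^{\alpha_1,\ldots,\alpha_N}_{k_1,\ldots,k_N}(q) = \widetilde F^{\alpha_1,\ldots,\alpha_N}_{k_1,\ldots,k_N}(q)$ for every choice of homogeneous $\alpha_i$; equivalently, the correction term $W_{1,1}$ from \eqref{ThmFk1NAlpha1N.1} vanishes identically in this setting.

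Next I would use \eqref{chLnGAlpha} as in the proof of Theorem~\ref{AbelianSur} to write
$$
\big \langle \ch_{k_1}^{L_1}\cdots\ch_{k_N}^{L_N} \big \rangle'
= (q;q)_\infty^{\chi(X)}\sum_{(\alpha_i, k_i')} F^{\alpha_1,\ldots,\alpha_N}_{k_1',\ldots,k_N'}(q),
$$
where each pair $(\alpha_i, k_i')$ ranges over $\{(1_X, k_i), (L_i, k_i-1), (L_i^2/2, k_i-2)\}$. Since $e_X\alpha_i = 0$ is automatic on $X$, Theorem~\ref{EX=0Fk1NAlpha1N} applies to each summand and expresses it as $(q;q)_\infty^{-\chi(X)}\,\Coe_{z_1^0\cdots z_N^0}\big(\prod_i \Theta^{\alpha_i}_{k_i'}(q,z_i)\big)$ plus a remainder whose product with $(q;q)_\infty^{\chi(X)}$ is already, with $q$-independent coefficients, a linear combination of coefficients of $z_1^0\cdots z_N^0$ in multiple $q$-zeta values of weight $<\sum_i(k_i'+2)\le\sum_i(k_i+2)$.

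It then remains to analyze the leading pieces $\Coe_{z_1^0\cdots z_N^0}\big(\prod_i \Theta^{\alpha_i}_{k_i'}(q,z_i)\big)$. Because $K_X = 0$, the defining formula \eqref{ThetaAlphaK.02} gives $\Theta^\alpha_k(q,z) = -\langle 1_X,\alpha\rangle\cdot R_k(q,z)$, where $R_k(q,z)$ is a universal weight-$(k+2)$ iterated sum of precisely the shape of a multiple $q$-zeta value with $z$ inserted. Since $\langle 1_X, 1_X\rangle = 0$ and $\langle 1_X, L_i\rangle = 0$ (neither class is top-dimensional on the surface), every leading piece vanishes except the single choice $(\alpha_i, k_i') = (L_i^2/2, k_i-2)$ for every $i$; this surviving piece has total weight $\sum_i k_i < \sum_i(k_i+2)$ and, because its $z_i$'s are separated across factors, its $\Coe_{z_1^0\cdots z_N^0}$ is a linear combination of coefficients of $z_1^0\cdots z_N^0$ in multiple $q$-zeta values of the required weight, with coefficients depending on the intersection numbers $\langle L_i^2, 1_X\rangle$ and independent of $q$. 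The leading term $\Coe_{z_1^0\cdots z_N^0}\prod_i \Theta^{1_X}_{k_i}(q,z_i)$ of \eqref{AbelianSur.0} itself vanishes on the abelian surface for the same reason $\langle 1_X, 1_X\rangle = 0$, so $W$ equals the entire reduced series and has the asserted form. The main obstacle is purely bookkeeping: tracking which of the $3^N$ choices $(\alpha_i, k_i')$ produce nonvanishing leading or subleading contributions and checking that all surviving weights lie strictly below $\sum_i(k_i+2)$; no further analytic input is required beyond Theorem~\ref{EX=0Fk1NAlpha1N}.
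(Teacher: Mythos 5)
Your proposal is correct and follows essentially the same route as the paper: the vanishing of $e_X$ and $K_X$ on an abelian surface forces $F^{\alpha_1,\ldots,\alpha_N}_{k_1',\ldots,k_N'}=\widetilde F^{\alpha_1,\ldots,\alpha_N}_{k_1',\ldots,k_N'}$ via Theorem~\ref{char_th}, and then Theorem~\ref{EX=0Fk1NAlpha1N} applied to each summand of the decomposition from the proof of Theorem~\ref{AbelianSur} yields the claim. Your additional observations --- that the leading term of \eqref{AbelianSur.0} vanishes because it is a multiple of $\langle K_X,K_X\rangle^N$, and that among the leading pieces only the choices $(\alpha_i,k_i')=(L_i^2/2,k_i-2)$ survive, with weight $\sum_i k_i<\sum_i(k_i+2)$ --- are accurate bookkeeping that the paper leaves implicit.
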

\begin{proof}
Since $e_X = K_X = 0$, 
$F^{\w \alpha_1, \ldots, \w \alpha_N}_{\w k_1, \ldots, \w k_N} 
= \widetilde F^{\w \alpha_1, \ldots, \w \alpha_N}_{\w k_1, \ldots, \w k_N}$ 
by Lemma~\ref{FtoW}, Theorem~\ref{char_th} and \eqref{DefWideF}. 
By Theorem~\ref{EX=0Fk1NAlpha1N} and the proof of Theorem~\ref{AbelianSur},
our theorem follows.
\end{proof}

Our next two propositions compute the series $\big \langle \ch_k^L \big \rangle$ completely,
and should offer some insight into the lower weight term $W$ in Theorem~\ref{AbelianSur}. 
Proposition~\ref{ch1L} calculates $\big \langle \ch_1^L \big \rangle$ 
by assuming $e_X = 0$, while Proposition~\ref{chkL} deals with the series 
$\big \langle \ch_k^L \big \rangle, k \ge 2$ by assuming $e_X = K_X = 0$
(i.e., by assuming that $X$ is an abelian surface). 
Note from \eqref{chLnGAlpha} that when $\chi(X) = 0$, we have
\begin{eqnarray}    \label{chktoFalpha}
\big \langle \ch_{k}^L \big \rangle' = \big \langle \ch_{k}^L \big \rangle
= F^{1_X}_k(q) + F^{L}_{k-1}(q) + {1 \over 2} \cdot F^{L^2}_{k-2}(q).
\end{eqnarray}

\begin{proposition}   \label{ch1L}
Let $L$ be a line bundle over a smooth projective surface $X$ with $e_X = 0$. Then,
the series $\big \langle \ch_{1}^L \big \rangle$ is the coefficient of $z^0$ in
$$
-\langle K_X, L \rangle \cdot \sum_{n} { q^n \over (1-q^n)^2}
- {\langle K_X, K_X \rangle \over 2} \cdot \sum_{n} {(n-1)q^n \over (1 - q^n)^2} 
$$
$$ 
- {\langle K_X, K_X \rangle \over 2}
\cdot \sum_n {(qz)^{n} \over 1-q^n} 
\cdot \left (\sum_m {z^{-2m} \over (1-q^m)^2} + 2 \sum_{m_1 > m_2} 
{z^{-m_1} \over 1-q^{m_1}} {z^{-m_2} \over 1-q^{m_2}} \right ).
$$
\end{proposition}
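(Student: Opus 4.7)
\textit{Proof proposal.} The plan is to apply Propositions~\ref{PropFq0Alpha} and \ref{Propch1Alpha} directly, after reducing $\langle \ch_{1}^L \rangle$ to a sum of two $F$-series via \eqref{chktoFalpha}. Since $e_X = 0$, we have $\chi(X) = \int_X e_X = 0$, so the prefactor $(q;q)_\infty^{-\chi(X)}$ equals $1$ everywhere in this calculation, and $\langle \ch_1^L \rangle' = \langle \ch_1^L \rangle$. By \eqref{chLnGRR}--\eqref{chLnGAlpha},
$$
\ch_1(\Ln) = G_1(1_X, n) + G_0(L, n) + G_{-1}(L^2/2, n),
$$
and the last term vanishes because $\ch_0(\mathcal O_{\mathcal Z_n}) = 0$ (the structure sheaf of the codimension-$2$ subscheme $\mathcal Z_n$ has generic rank zero). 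Hence
$$
\langle \ch_{1}^L \rangle = F^{1_X}_1(q) + F^{L}_0(q).
$$

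For $F^L_0(q)$, I would invoke Proposition~\ref{PropFq0Alpha} with $\alpha = L$. The term $\langle e_X, L \rangle$ drops out by hypothesis, and the pairing $\langle 1_X, L \rangle = \int_X L$ vanishes by degree (since $L$ lives in $H^2(X)$, not $H^4(X)$), leaving
$$
F^L_0(q) = -\langle K_X, L \rangle \cdot \sum_n \frac{q^n}{(1-q^n)^2}.
$$
This accounts for the first summand in the target formula.

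For $F^{1_X}_1(q)$, I would apply Proposition~\ref{Propch1Alpha} with $\alpha = 1_X$; the hypothesis $e_X \cdot 1_X = e_X = 0$ is satisfied. The only simplification needed is the pairing identity
$$
\langle K_X - K_X^2, 1_X \rangle = \langle K_X, 1_X \rangle - \langle K_X^2, 1_X \rangle = -\langle K_X, K_X \rangle,
$$
again using that $\langle K_X, 1_X \rangle = \int_X K_X = 0$ by degree. Substituting yields precisely the two $\langle K_X, K_X \rangle$-contributions in the statement of Proposition~\ref{ch1L}, one in $z$-independent form and one picked out as the coefficient of $z^0$ in the displayed series involving $(qz)^n$ and $z^{-m}$.

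Finally, adding $F^{1_X}_1(q)$ and $F^L_0(q)$ and observing that $F^L_0(q)$ contributes only to the coefficient of $z^0$ trivially (being $z$-independent) assembles the formula in the proposition. There is no real obstacle here: the proof is a bookkeeping exercise combining the two preceding propositions with the two dimension-based vanishing identities $\langle 1_X, L \rangle = 0$ and $\langle K_X, 1_X \rangle = 0$. The only point worth stating explicitly is the vanishing $G_{-1}(L^2/2, n) = 0$, which is why there is no $L^2$-term in the answer.
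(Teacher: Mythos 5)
Your proof is correct and follows the paper's own (one-line) argument exactly: reduce via \eqref{chktoFalpha} to $F^{1_X}_1(q) + F^{L}_0(q)$ (using $\chi(X)=0$), then substitute Propositions~\ref{PropFq0Alpha} and~\ref{Propch1Alpha} together with the degree-vanishings $\langle 1_X, L\rangle = \langle K_X, 1_X\rangle = 0$. The only cosmetic point is that the vanishing of the $L^2$-term $G_{-1}(L^2/2,n)$ really uses $\ch_1(\mathcal O_{\mathcal Z_n})=0$ (codimension-two support) in addition to $\ch_0(\mathcal O_{\mathcal Z_n})=0$, but your conclusion is right.
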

\begin{proof}
Our formula follows from \eqref{chktoFalpha}, \eqref{PropFq0Alpha.0} 
and Proposition~\ref{Propch1Alpha}.
\end{proof}

\begin{proposition}   \label{chkL}
Let $L$ be a line bundle over an abelian surface $X$. If $2 \nmid k$, 
then $\big \langle \ch_{k}^L \big \rangle' = 0$. If $2|k$, the generating series 
$\big \langle \ch_{k}^L \big \rangle'$ is the coefficient of $z^0$ in
$$
-{\langle L, L \rangle \over 2} \cdot
\sum_{a, s_1, \ldots, s_a, b, t_1, \ldots, t_b \ge 1 
\atop \sum_{i=1}^a s_i + \sum_{j = 1}^b t_j = k} 
\prod_{i=1}^a {(-1)^{s_i} \over s_i!} \cdot \prod_{j=1}^b {1 \over t_j!} 
$$
\begin{eqnarray*}    
\cdot \sum_{n_1 > \cdots > n_a} \prod_{i= 1}^a {(q z)^{n_i s_i} \over (1-q^{n_i})^{s_i}}
\cdot \sum_{m_1 > \cdots > m_b} \prod_{j= 1}^b {z^{-m_j t_j} \over (1-q^{m_j})^{t_j}}.
\end{eqnarray*}
\end{proposition}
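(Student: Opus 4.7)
The plan is to combine the decomposition \eqref{chktoFalpha} with Proposition~\ref{ThmFkAlpha} applied three times. Since $X$ is abelian, $\chi(X) = 0$ and therefore $(q;q)_\infty^{\chi(X)} = 1$, so that $\big \langle \ch_k^L \big \rangle' = \big \langle \ch_k^L \big \rangle$, and by \eqref{chktoFalpha} this equals $F^{1_X}_k(q) + F^L_{k-1}(q) + \tfrac{1}{2} F^{L^2}_{k-2}(q)$. Moreover $K_X = e_X = 0$ on an abelian surface, so each of the three classes $1_X, L, L^2$ satisfies the hypothesis $K_X \alpha = e_X \alpha = 0$ required by Proposition~\ref{ThmFkAlpha}.

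Next I would kill the first two summands using Proposition~\ref{ThmFkAlpha}~(i): since $|1_X| = 0 < 4$ and $|L| = 2 < 4$, we get $F^{1_X}_k(q) = 0$ and $F^{L}_{k-1}(q) = 0$. Consequently $\big \langle \ch_k^L \big \rangle' = \tfrac{1}{2} F^{L^2}_{k-2}(q)$, and since $|L^2| = 4$ the class $L^2$ falls under Proposition~\ref{ThmFkAlpha}~(ii). The parity $2 \nmid k$ is equivalent to $2 \nmid (k-2)$, so the final clause of Proposition~\ref{ThmFkAlpha}~(ii) gives $F^{L^2}_{k-2}(q) = 0$ in that case, yielding the vanishing statement.

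In the remaining case $2 \mid k$, I would substitute the explicit formula in Proposition~\ref{ThmFkAlpha}~(ii) with $\alpha = L^2$ and $k$ replaced by $k-2$; the summation constraint $\sum_{i=1}^a s_i + \sum_{j=1}^b t_j = (k-2) + 2 = k$ lines up exactly with the one displayed in Proposition~\ref{chkL}. Using $\langle 1_X, L^2 \rangle = \int_X L^2 = \langle L, L \rangle$, $(q;q)_\infty^{-\chi(X)} = 1$, and combining the sign from Proposition~\ref{ThmFkAlpha}~(ii) with the prefactor $\tfrac{1}{2}$ from \eqref{chktoFalpha} produces the overall coefficient $-\tfrac{\langle L, L \rangle}{2}$, matching the stated formula.

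This is purely a bookkeeping assembly of earlier results, so no serious obstacle is anticipated; the only points requiring a moment of care are the parity equivalence $2 \nmid k \iff 2 \nmid k-2$ and the compatibility of the shifted summation index with the statement of Proposition~\ref{chkL}.
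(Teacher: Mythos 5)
Your proposal is correct and coincides with the paper's own argument, which simply cites \eqref{chktoFalpha} together with Proposition~\ref{ThmFkAlpha}; your write-up just makes the bookkeeping (vanishing of $F^{1_X}_k$ and $F^{L}_{k-1}$ by part~(i), the shift $k\mapsto k-2$ in part~(ii), and the constant $-\tfrac{1}{2}\langle L,L\rangle$) explicit.
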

\begin{proof}
Follows immediately from \eqref{chktoFalpha} and Proposition~\ref{ThmFkAlpha}.
\end{proof}
\section{\bf Applications to the universal constants in 
$\displaystyle{\sum_n c\big (T_\Xn \big )} \, q^n$} 
\label{sect_app}

Let $x \in H^4(X)$ be the cohomology class of a point in the surface $X$.
In this section, we will compute $F^{x, \ldots, x}_{k_1, \ldots, k_N}(q)$ in terms of 
the universal constants in the expression of 
$\displaystyle{\sum_n c\big (T_\Xn \big )} \, q^n$ formulated 
in \cite{Boi, BN}. Comparing with Proposition~\ref{ThmFkAlpha}~(ii) enables us 
to determine some of these universal constants.

Let $C_i = {2i \choose i}/(i+1)$ be the Catalan number and
$\sigma_1(i) = \sum_{j|i} j$.
Recall that $\cp = \Wcp_+$ is the set of all the usual partitions.
The following lemma is from \cite{Boi, BN}.

\begin{lemma} \label{Boi}
There exist unique rational numbers $b_\mu, f_\mu, g_\mu, h_\mu$ 
depending only on the partitions $\mu \in \cp$ such that 
$\displaystyle{\sum_n c\big (T_\Xn \big )} \, q^n$ is equal to 
$$
\exp \left ( \sum_{\mu \in \cp} q^{|\mu|} \Big (b_\mu \fa_{-\mu}(1_X) 
+ f_\mu \fa_{-\mu}(e_X) + g_\mu \fa_{-\mu}(K_X) 
+ h_\mu \fa_{-\mu}(K_X^2) \Big ) \right ) \vac.
$$
In addition, for $i \ge 1$, we have $b_{2i} = 0$, $b_{2i-1} = (-1)^{i-1}C_{i-1}/(2i-1)$, 
$b_{(1^i)} = f_{(1^i)} = -g_{(1^i)} = \sigma_1(i)/i$, and $h_{(1^i)} = 0$.
\end{lemma}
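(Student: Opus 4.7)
The plan is to expand $\displaystyle{\sum_n c(T_{\Xn})q^n}$ in the Fock space basis and read off the coefficients. By Theorem~\ref{commutator}, $\fock$ is freely generated from $\vac$ by the creation operators $\fa_{-n}(\alpha)$, so every element of $\fock$ with nonzero projection to $\C\vac$ can be uniquely written as $\exp(X)\vac$ for some element $X$ in the (completed) Heisenberg algebra. Universality --- namely, that a single formula must produce $c(T_{\Xn})$ for every smooth projective surface $X$ simultaneously --- then forces $X$ to be a linear combination of the ``diagonal'' operators $\fa_{-\mu}(\gamma)$ rather than arbitrary Heisenberg monomials at distinct classes. This gives the claimed exponential form.

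For the restriction to the four classes $\{1_X, e_X, K_X, K_X^2\}$: the class $c(T_{\Xn})$ is built universally from $\ch({\mathcal O}_{{\mathcal Z}_n})$ and the Todd class of $X$ through the Grothendieck--Riemann--Roch setup of Section~\ref{sect_general}, so only classes living in $H^{\rm even}(X)$ can appear. As universal surface invariants, $H^{\rm even}(X)$ is spanned by $1_X, K_X, K_X^2, e_X$ in cohomological degrees $0,2,4,4$ respectively, exhausting the possibilities. Uniqueness of $b_\mu, f_\mu, g_\mu, h_\mu$ then follows from the linear independence of the corresponding Heisenberg monomials, confirmed by varying $X$ so that the four pairings $\langle 1_X,1_X\rangle$, $\langle K_X,K_X\rangle$, $\langle e_X,1_X\rangle$, $\langle K_X^2,1_X\rangle$ become independent.

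For the explicit values: setting $K_X = e_X = 0$ (e.g.\ on an abelian surface) isolates the $b_\mu$ contributions and reduces the problem to evaluating $\sum_n c(T_{\Xn})q^n$ in that case. Using the Carlsson--Okounkov vertex operator setup of Section~\ref{sect_CO} with $L = \fL_1$, so that $\mathbb E_{\fL_1}|_{\Delta_n} = T_{\Xn}$ by \eqref{ResOfEToD}, the generating function converts into a trace of a vertex operator on $\fock$, which can be evaluated explicitly. The vanishing $b_{2i} = 0$ then follows from a parity argument analogous to the one used in Proposition~\ref{ThmFkAlpha}, and $b_{2i-1} = (-1)^{i-1} C_{i-1}/(2i-1)$ drops out of the Catalan generating function identity $\sum_{i\ge 0} C_i x^{i} = (1-\sqrt{1-4x})/(2x)$ after a suitable change of variables. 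The hook values $b_{(1^i)} = f_{(1^i)} = -g_{(1^i)} = \sigma_1(i)/i$ and $h_{(1^i)} = 0$ capture the contributions of $\fa_{-1}^i$; these encode G\"ottsche's formula $\sum_n \chi(\Xn) q^n = \prod_n(1-q^n)^{-\chi(X)}$ via the identity $\sum_i (\sigma_1(i)/i) q^i = -\log \prod_n(1-q^n)$, after separating the $1_X$, $e_X$, $K_X$, and $K_X^2$ contributions by testing on surfaces that successively turn on each pairing.

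The main obstacle is matching the combinatorial output of the vertex operator trace with the Catalan sequence to pin down $b_{2i-1}$ exactly. This requires either an explicit closed-form evaluation of the relevant trace on an abelian surface, or a torus-equivariant fixed-point computation on the Hilbert scheme of points on $\C^2$ (where localization produces hook-length sums that resum to Catalan numbers); either approach demands delicate bookkeeping of signs, normalizations, and cancellations between the creation and annihilation contributions in $W(\fL_1, z)$.
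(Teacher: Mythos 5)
First, note that the paper does not prove Lemma~\ref{Boi} at all: it is quoted from Boissi\`ere \cite{Boi} and Boissi\`ere--Nieper-Wisskirchen \cite{BN}, so there is no internal proof to compare against. Your outline is broadly consistent with how those references actually proceed (a universality/structure theorem giving the exponential form, followed by explicit computations --- localization on $(\C^2)^{[n]}$ for the one-row coefficients and G\"ottsche's formula for the $(1^i)$ coefficients), but as a self-contained argument it has two genuine gaps.

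The first is the reduction to \emph{diagonal} operators. You assert that ``universality forces $X$ to be a linear combination of the diagonal operators $\fa_{-\mu}(\gamma)$ rather than arbitrary Heisenberg monomials at distinct classes.'' Universality in the naive sense (coefficients independent of the surface) does not force this: a term such as $q^2\,\fa_{-1}(K_X)\fa_{-1}(K_X)\vac$ is a perfectly universal expression and is \emph{not} of the form $\fa_{-\mu}(\gamma)$ for any single class $\gamma \in \{1_X, e_X, K_X, K_X^2\}$, since it involves the K\"unneth component $K_X \otimes K_X$ of $H^*(X^2)$ rather than a pushforward along the diagonal. Ruling out such non-diagonal monomials is precisely the substantive content of the structure theorems of Ellingsrud--G\"ottsche--Lehn and Li--Qin--Wang that \cite{Boi, BN} rely on: $c(T_{\Xn})$ is produced by correspondences supported on incidence varieties whose kernels push forward from small diagonals. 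Without invoking that machinery, the claim that only $\fa_{-\mu}(\gamma)$ with $\gamma$ among the four listed classes can occur is unproved. The second gap you acknowledge yourself: the identification $b_{2i-1} = (-1)^{i-1}C_{i-1}/(2i-1)$ is exactly the hard computation of \cite{Boi} (an equivariant fixed-point sum over partitions on $(\C^2)^{[n]}$ resummed into the Catalan generating function), and your proposal defers it rather than carrying it out. As written, the proposal is a reasonable roadmap to the cited literature rather than a proof.
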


Our goal is to compute $F^{\alpha_1, \ldots, \alpha_N}_{k_1, \ldots, k_N}(q)$
in terms of the universal constants $b_\mu, f_\mu, g_\mu$ and $h_\mu$.
Using the definition of the operators $\fG_{k_i}(\alpha_i)$, we see that 
\begin{eqnarray}    \label{F-generating.1}
   F^{\alpha_1, \ldots, \alpha_N}_{k_1, \ldots, k_N}(q)
&=&\sum_n q^n \left \langle \left ( \prod_{i=1}^N G_{k_i}(\alpha_i, n) \right ) 
     c\big (T_\Xn \big ), 1_\Xn \right \rangle     \nonumber \\
&=&\sum_n q^n \left \langle \left ( \prod_{i=1}^N \fG_{k_i}(\alpha_i) \right ) 
     c\big (T_\Xn \big ), 1_\Xn \right \rangle     \nonumber \\
&=&\left \langle \left ( \prod_{i=1}^N \fG_{k_i}(\alpha_i) \right ) 
     \sum_n c\big (T_\Xn \big )q^n, |1\rangle \right \rangle
\end{eqnarray}
where we have put $\displaystyle{|1\rangle = \sum_n 1_\Xn 
= \exp{\big( \fa_{-1}(1_X) \big )} \cdot \vac}$.

\begin{lemma}  \label{splitting}
Let $w \in \fock$, and $\fG$ be a (possibly infinite) sum of monomials of Heisenberg creation operators.
Then, $\big \langle \fG w,|1\rangle \big \rangle =
\big \langle \fG \vac,|1\rangle \big \rangle \cdot \big \langle w,|1\rangle \big \rangle$.
\end{lemma}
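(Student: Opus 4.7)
My plan is to reduce the identity to an adjoint computation and then exploit the elementary fact that annihilation operators act on the distinguished vector $|1\rangle$ by scalars. By linearity it suffices to establish the factorization when $\fG = M$ is a single monomial $M = \fa_{-n_1}(\alpha_1)\cdots \fa_{-n_k}(\alpha_k)$ with $n_j > 0$. Using the adjoint relation $\fa_m(\beta)^* = (-1)^m \fa_{-m}(\beta)$ recalled in the proof of Lemma~\ref{CommJL-}, one has
\begin{equation*}
\langle M w, |1\rangle\rangle
\;=\; (-1)^{n_1 + \cdots + n_k}\,\bigl\langle w,\; \fa_{n_k}(\alpha_k)\cdots \fa_{n_1}(\alpha_1)\,|1\rangle\bigr\rangle,
\end{equation*}
so the entire lemma reduces to the assertion that $M^* |1\rangle$ is a scalar multiple of $|1\rangle$.

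The core of the argument is the observation that each individual annihilation operator $\fa_n(\alpha)$ with $n>0$ acts on $|1\rangle$ by a scalar. Indeed, since $|1\rangle = \exp(\fa_{-1}(1_X))\vac$ and $\fa_n(\alpha)\vac = 0$, Theorem~\ref{commutator} combined with a direct commutator expansion yields
\begin{equation*}
\fa_n(\alpha)\,|1\rangle
\;=\; \bigl[\fa_n(\alpha),\,\exp(\fa_{-1}(1_X))\bigr]\vac
\;=\; -\delta_{n,1}\,\langle \alpha, 1_X\rangle\cdot |1\rangle.
\end{equation*}
Iterating this observation through the product from right to left gives $\fa_{n_k}(\alpha_k)\cdots \fa_{n_1}(\alpha_1)\,|1\rangle = c_M \cdot |1\rangle$ with $c_M := \prod_{j=1}^{k} \bigl(-\delta_{n_j,1}\,\langle \alpha_j, 1_X\rangle\bigr)$.

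To finish, specialize the displayed identity to $w = \vac$. Since $\langle \vac, |1\rangle\rangle = \int_{X^{[0]}} 1 = 1$, the same scalar reappears: $\langle M\vac, |1\rangle\rangle = (-1)^{\sum_j n_j} c_M$. Substituting back gives
\begin{equation*}
\langle M w, |1\rangle\rangle
\;=\; (-1)^{\sum_j n_j}\,c_M\,\langle w, |1\rangle\rangle
\;=\; \langle M\vac, |1\rangle\rangle\cdot \langle w, |1\rangle\rangle,
\end{equation*}
which is the asserted factorization; summing over the (possibly infinite) collection of monomials comprising $\fG$ completes the proof. The only real obstacle is bookkeeping, namely tracking the super-signs that appear when some $\alpha_j$ have odd cohomology degree; these cancel on the two sides of the identity because Theorem~\ref{commutator} and the adjoint formula are applied consistently in the super sense throughout.
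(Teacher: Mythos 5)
Your proof is correct, but it runs along a genuinely different track from the paper's. The paper decomposes $w$ into Heisenberg monomials applied to $\vac$ and then observes, by a degree count against the fundamental classes making up $|1\rangle$, that $\big\langle \fa_{-n_1}(\alpha_1)\cdots\fa_{-n_s}(\alpha_s)\vac,|1\rangle\big\rangle$ vanishes unless every factor $\fa_{-n_i}(\alpha_i)$ lies in $\C\,\fa_{-1}(x)$ (with $x$ the point class), after which both sides of the identity are evaluated explicitly as $u_1\cdots u_s v_1\cdots v_t$. You instead pass to adjoints and exploit that $|1\rangle=\exp\big(\fa_{-1}(1_X)\big)\vac$ is a simultaneous eigenvector of every annihilation operator, $\fa_n(\alpha)|1\rangle=-\delta_{n,1}\langle\alpha,1_X\rangle\,|1\rangle$, so that $M^*|1\rangle$ is a scalar multiple of $|1\rangle$ and the scalar is recovered by pairing against $\vac$. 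This buys you two small simplifications: you never need to decompose $w$ (the argument works for arbitrary $w\in\fock$ at once), and the normalization $\big\langle \fa_{-1}(x)^m\vac,|1\rangle\big\rangle=1$, which the paper's computation implicitly relies on, comes out automatically from the specialization $w=\vac$. One remark on your closing caveat: the super-sign bookkeeping is in fact vacuous here, since whenever some $\alpha_j$ has odd degree the eigenvalue $-\langle\alpha_j,1_X\rangle=-\int_X\alpha_j$ vanishes, so both sides of the identity are zero and no sign comparison is ever needed; the only nontrivial case has all $\alpha_j$ of top (even) degree.
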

\begin{proof}
By linearity, it suffices to prove that
\begin{eqnarray*}   
\left \langle \prod_{i=1}^s \fa_{-n_i}(\alpha_i) \cdot 
  \prod_{j=1}^t \fa_{-m_j}(\beta_j)\vac,|1\rangle \right \rangle 
= \left \langle \prod_{i=1}^s \fa_{-n_i}(\alpha_i) \vac,|1\rangle \right \rangle \cdot
  \left \langle \prod_{j=1}^t \fa_{-m_j}(\beta_j)\vac,|1\rangle \right \rangle
\end{eqnarray*}
where $n_1, \ldots, n_s, m_1, \ldots, m_t > 0$, 
and $\alpha_1, \ldots, \alpha_s, \beta_1, \ldots, \beta_t$ are homogeneous.
Indeed, if $\fa_{-n_i}(\alpha_i) \not \in \C \, \fa_{-1}(x)$ for some $i$ or 
if $\fa_{-m_j}(\beta_j) \not \in \C \, \fa_{-1}(x)$ for some $j$,
then both sides are equal to $0$. Otherwise, letting $\fa_{-n_i}(\alpha_i) 
= u_i \fa_{-1}(x)$ for every $i$ and $\fa_{-m_j}(\beta_j) = v_j \fa_{-1}(x)$ for every $j$,
we see that both sides are $u_1 \cdots u_s v_1 \cdots v_t$.
\end{proof}

\begin{lemma}  \label{Fqxk}
Let $b_{(1^{j})}$ and $b_{(i, 1^j)}$ be from Lemma~\ref{Boi}.
Let $\w b_{(i, 1^j)} = (j+1) b_{(1^{j+1})}$ if $i = 1$, and $\w b_{(i, 1^j)} 
= b_{(i, 1^j)}$ if $i > 1$. Then, $F^{x,\ldots,x}_{k_1, \ldots, k_N}(q)$ is equal to
$$   
(q; q)_\infty^{-\chi(X)} \cdot 
(-1)^N  \sum_{\sum_{s \ge 1} (s+1) m_{i,s} = k_i + 2 \atop 1 \le i \le N}    
\prod_{i=1}^N {1 \over (\sum_{s \ge 1} s m_{i,s})!} \cdot  
$$
$$
\cdot \prod_{s \ge 1} \left ({(-s)^{m_s} m_s! \over \prod_{i=1}^{N} m_{i,s}!} 
      \sum_{t_0 + t_1 + \cdots + t_j + \cdots = m_s} 
     \prod_{j=0}^{+\infty} {(\w b_{(s, 1^j)} q^{s+j})^{t_j} \over  t_j!} \right )
$$
where $m_{i,s} \ge 0$ for every $i$ and $s$, and $m_s = \sum_{i=1}^{N} m_{i,s}$ 
for every $s \ge 1$.
\end{lemma}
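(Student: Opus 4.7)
The strategy is to translate $F^{x,\ldots,x}_{k_1,\ldots,k_N}(q)$ into a Fock-space pairing via \eqref{F-generating.1},
$$F^{x,\ldots,x}_{k_1,\ldots,k_N}(q) = \Bigl\langle \Bigl(\prod_{i=1}^N \fG_{k_i}(x)\Bigr)\Omega,\,|1\rangle\Bigr\rangle, \qquad \Omega := \sum_n c(T_\Xn)\,q^n = \exp(A)\vac,$$
where by Lemma~\ref{Boi} the exponent $A$ is a sum of creation operators with coefficients $b_\mu,f_\mu,g_\mu,h_\mu$. I will push the annihilation parts of each $\fG_{k_i}(x)$ through $\Omega$ using commutators, apply Lemma~\ref{splitting} to peel off the G\"ottsche factor $\langle\Omega,|1\rangle\rangle = (q;q)_\infty^{-\chi(X)}$, and then reduce the remaining pairing by a degree-count argument to the stated explicit sum.

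Three simplifications, all stemming from $x$ being the point class, drive the computation. First, because $e_X x = K_X x = K_X^2 x = 0$ in $H^*(X)$, Theorem~\ref{char_th} collapses to $\fG_{k_i}(x) = -\sum_{\ell(\la)=k_i+2,\,|\la|=0}\fa_\la(x)/\la!$, which accounts for the global sign $(-1)^N$. Second, since $\langle x,x\rangle = 0$ and $x$ has even degree, all $\fa_m(x)$ ($m\in\Z$) mutually commute, so I may reorganize $\prod_i\fa_{\la^{(i)}}(x) = \bigl(\prod_i\fa_{N^{(i)}}(x)\bigr)\cdot\bigl(\prod_i\fa_{P^{(i)}}(x)\bigr)$ by splitting each $\la^{(i)}$ into its negative part $N^{(i)}$ and positive part $P^{(i)} = (1^{m_{i,1}}2^{m_{i,2}}\cdots)$. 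Third, Lemma~\ref{tau_k_tau_{k-1}}(i) gives $[\fa_s(x),\fa_{-\mu}(\gamma)] = -s\,m_s(\mu)\,\fa_{-(\mu\setminus s)}(x\gamma)$; since $x\gamma = 0$ for $\gamma\in\{e_X,K_X,K_X^2\}$, only the $b_\mu\fa_{-\mu}(1_X)$ summands of $A$ contribute to $\mathcal D_s := [\fa_s(x),A]$, which is itself a sum of pure creation operators $\fa_{-\nu}(x)$ and hence commutes with $A$. Iterating $\fa_s(x)\Omega = \mathcal D_s\Omega$ then yields $\prod_i\fa_{P^{(i)}}(x)\cdot\Omega = \prod_s\mathcal D_s^{m_s}\cdot\Omega$ with $m_s := \sum_i m_{i,s}$, and since $\bigl(\prod_i\fa_{N^{(i)}}(x)\bigr)\bigl(\prod_s\mathcal D_s^{m_s}\bigr)$ is a sum of pure creation monomials, Lemma~\ref{splitting} factorizes the pairing as $(q;q)_\infty^{-\chi(X)}\cdot\bigl\langle\bigl(\prod_i\fa_{N^{(i)}}(x)\bigr)\bigl(\prod_s\mathcal D_s^{m_s}\bigr)\vac,\,|1\rangle\bigr\rangle$.

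The crux is now a degree count: a monomial $\fa_{-n_1}(x)\cdots\fa_{-n_r}(x)\vac$ sits in $H^{2\sum n_j+2r}(X^{[\sum n_j]})$, whose top degree $4\sum n_j$ is attained only when $r = \sum n_j$, forcing every $n_j = 1$. Consequently only $\fa_{-1}(x)^M$-contributions pair nontrivially with $|1\rangle$, which (i) forces $N^{(i)} = (-1)^{|P^{(i)}|}$, converting $\ell(\la^{(i)}) = k_i+2$ into $\sum_s(s+1)m_{i,s} = k_i+2$ and $\la^{(i)}! = (\sum_s sm_{i,s})!\prod_s m_{i,s}!$, and (ii) retains from $\mathcal D_s$ only its $\fa_{-1}(x)^r$-components, which a short calculation identifies as $-s\,\w b_{(s,1^r)}\,q^{s+r}\,\fa_{-1}(x)^r$; the multiplicity factor $r+1$ arising in the $s=1$ case from $m_1(\mu)+1$ is exactly absorbed into the definition $\w b_{(1,1^r)} = (r+1)b_{(1^{r+1})}$. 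Using $\langle\fa_{-1}(x)^M\vac,|1\rangle\rangle = 1$ (immediate from $\fa_1(x)|1\rangle = -|1\rangle$ via adjoints), the stated formula drops out by multinomially expanding $(\sum_r D_{s,r}\fa_{-1}(x)^r)^{m_s}$ grouped by $t_j := \#\{k: r_k = j\}$, with the factor $m_s!/\prod_i m_{i,s}!$ arising from combining the $m_s!$ of the multinomial expansion with the $1/\prod_{i,s}m_{i,s}!$ piece of $\prod_i 1/\la^{(i)}!$. The main obstacle I anticipate is the degree-count step that eliminates all negative parts other than $-1$, together with the careful matching of the $s=1$ multiplicity to the $\w b$-normalization; everything else is routine commutative-algebra bookkeeping.
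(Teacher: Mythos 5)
Your proposal is correct and follows essentially the same route as the paper's proof: reduce to the Fock-space pairing \eqref{F-generating.1}, collapse $\fG_{k_i}(x)$ via Theorem~\ref{char_th} using $e_Xx=K_Xx=K_X^2x=0$, kill everything except $\fa_{-1}(x)$-monomials by the degree count (the paper's \eqref{taukx0}), factor with Lemma~\ref{splitting}, and expand multinomially to produce the $\w b_{(s,1^j)}$ with the $(j+1)$-multiplicity in the $s=1$ case. The only (cosmetic) differences are that you organize the commutators through the operators $\mathcal D_s=[\fa_s(x),A]$ and identify the G\"ottsche factor as $\langle \sum_n c(T_\Xn)q^n,|1\rangle\rangle$ directly, whereas the paper first prunes the exponent to its $b_\mu$ and $\w f_{(1^i)}$ parts and obtains $(q;q)_\infty^{-\chi(X)}$ from the $N=0$ specialization.
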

\begin{proof}
By \eqref{F-generating.1} and Theorem~\ref{char_th}, we obtain 
\begin{eqnarray}   
   F^{x,\ldots,x}_{k_1, \ldots, k_N}(q)
&=&\left \langle \left ( \prod_{i=1}^N \fG_{k_i}(x) \right ) 
     \sum_n c\big (T_\Xn \big )q^n, |1\rangle \right \rangle,  \label{Fqxk.1}  \\
\prod_{i=1}^N \mathfrak G_{k_i}(x) 
&=&(-1)^N \sum_{\ell(\lambda^{(i)}) = k_i+2, |\lambda^{(i)}|=0 \atop 1 \le i \le N}
   \prod_{i=1}^N {\mathfrak a_{\lambda^{(i)}}(x) \over (\lambda^{(i)})!}.   \label{Fqxk.2}
\end{eqnarray}
Note that $\tau_{\ell *} x = \underbrace{x \otimes \cdots \otimes x}_{\text{$\ell$ times}}$,
$K_X^2 = \langle K_X, K_X \rangle x$, $e_X = \chi(X) x$, and 
\begin{eqnarray}    \label{tauk1X}
\tau_{\ell*}1_X =
1_X \otimes \underbrace{x \otimes \cdots \otimes x}_{\text{$(\ell-1)$ times}} 
+ \cdots + \underbrace{x \otimes \cdots \otimes x}_{\text{$(\ell-1)$ times}} \otimes 1_X + w
\end{eqnarray}
where $w$ is a sum of cohomology classes of the form 
$\alpha_1 \otimes \cdots \otimes \alpha_\ell$ with $0 < |\alpha_i| < 4$ for some $i$.
So for a generalized partition $\la$, positive integers
$n_1, \ldots, n_s$ and homogeneous classes $\alpha_1, \ldots, \alpha_s \in H^*(X)$, we have
\begin{eqnarray}  \label{taukx0}   
\big \langle 
\mathfrak a_{\lambda}(x)\fa_{-n_1}(\alpha_1)\cdots \fa_{-n_s}(\alpha_s)\vac, 
|1\rangle \big \rangle = 0
\end{eqnarray}  
if $0 < |\alpha_i| < 4$ for some $i$, or if $\fa_{-n_i}(\alpha_i) \in \C \, \fa_{-j}(x)$
for some $i$ and for some $j > 1$. Combining with \eqref{Fqxk.1}, \eqref{Fqxk.2} and 
Lemma~\ref{Boi}, we see that $F^{x,\ldots,x}_{k_1, \ldots, k_N}(q)$ equals
\begin{eqnarray*}   
& &\left \langle \left ( \prod_{i=1}^N \fG_{k_i}(x) \right ) 
   \exp \left ( \sum_{\mu \in \cp}  b_\mu \fa_{-\mu}(1_X) q^{|\mu|}
   + \sum_i \w f_{(1^i)} \fa_{-(1^i)}(x)q^i  \right ) \vac, |1\rangle \right \rangle  \\
&=&\left \langle \exp \left ( \sum_i \w f_{(1^i)} \fa_{-(1^i)}(x)q^i  \right ) 
   \cdot \prod_{i=1}^N \fG_{k_i}(x) \cdot
   \exp \left ( \sum_{\mu \in \cp}  b_\mu \fa_{-\mu}(1_X) q^{|\mu|}
     \right ) \vac, |1\rangle \right \rangle
\end{eqnarray*}
where $\w f_{(1^i)} = \chi(X) \cdot f_{(1^i)}$.
By Lemma~\ref{splitting}, $F^{x,\ldots,x}_{k_1, \ldots, k_N}(q)$ 
is equal to
\begin{eqnarray}   \label{Fqxk.3}
& &\left \langle \exp \left ( \sum_i \w f_{(1^i)} \fa_{-(1^i)}(x)q^i  \right ) 
   \vac, |1\rangle \right \rangle  \nonumber   \\
&\cdot &\left \langle \prod_{i=1}^N \fG_{k_i}(x) \cdot
   \exp \left ( \sum_{\mu \in \cp}  b_\mu \fa_{-\mu}(1_X) q^{|\mu|}
     \right ) \vac, |1\rangle \right \rangle.
\end{eqnarray}
In particular, setting $N = 0$, we conclude that 
\begin{eqnarray}   \label{Fqxk.4}
\left \langle \exp \left ( \sum_i \w f_{(1^i)} \fa_{-(1^i)}(x)q^i  \right ) 
\vac, |1\rangle \right \rangle = F(q) = (q; q)_\infty^{-\chi(X)}.
\end{eqnarray}
It follows from \eqref{Fqxk.3} and \eqref{tauk1X} that 
$F^{x,\ldots,x}_{k_1, \ldots, k_N}(q)$ is equal to
\begin{eqnarray}   \label{Fqxk.5}
& &(q; q)_\infty^{-\chi(X)}  \left \langle \prod_{i=1}^N \fG_{k_i}(x) \cdot
   \exp \left ( \sum_{\mu \in \cp}  b_\mu \fa_{-\mu}(1_X) q^{|\mu|}
     \right ) \vac, |1\rangle \right \rangle   \nonumber   \\
&=&(q; q)_\infty^{-\chi(X)} \left \langle \prod_{i=1}^N \fG_{k_i}(x) \cdot
   \exp \left ( \sum_{i \ge 1 \atop j\ge 0} \w b_{(i, 1^j)} \fa_{-i}(1_X)\fa_{-1}(x)^j q^{i+j}
     \right ) \vac, |1\rangle \right \rangle   \quad \quad       
\end{eqnarray}
where $\w b_{(i, 1^j)} = (j+1) b_{(1^{j+1})}$ if $i = 1$, and $\w b_{(i, 1^j)} 
= b_{(i, 1^j)}$ if $i > 1$.
Let $\la^{(1)}, \ldots, \la^{(N)}$ be from the right-hand-side of \eqref{Fqxk.2}.
In order to have a nonzero pairing
\begin{eqnarray}   \label{Fqxk.6}
\left \langle \prod_{i=1}^N \mathfrak a_{\lambda^{(i)}}(x) \cdot
   \exp \left ( \sum_{i \ge 1, j\ge 0} \w b_{(i, 1^j)} \fa_{-i}(1_X)\fa_{-1}(x)^j q^{i+j}
     \right ) \vac, |1\rangle \right \rangle,
\end{eqnarray}
each $\la^{(i)}$ with $1 \le i \le N$ must be of the form 
$\big ( (-1)^{n_i} 1^{m_{i,1}} 2^{m_{i,2}} \cdots \big )$; 
since $\ell(\lambda^{(i)}) = k_i+2$ and $|\lambda^{(i)}|=0$, we get
$n_i + \sum_{s \ge 1} m_{i,s} = k_i + 2$ and $n_i = \sum_{s \ge 1} s m_{i, s}$; so
\begin{eqnarray}   \label{Fqxk.7}
\sum_{s \ge 1} (s+1) m_{i,s} = k_i + 2.
\end{eqnarray}
In this case, using Lemma~\ref{splitting}, we see that \eqref{Fqxk.6} is equal to
\begin{eqnarray*}   
& &\left \langle \fa_{-1}(x)^{\sum_i n_i} \cdot \prod_{i, s} \fa_s(x)^{m_{i,s}} \cdot
   \exp \left ( \sum_{i \ge 1, j\ge 0} \w b_{(i, 1^j)} \fa_{-i}(1_X)\fa_{-1}(x)^j q^{i+j}
     \right ) \vac, |1\rangle \right \rangle      \\
&=&\left \langle \prod_{1 \le i \le N, s \ge 1} \fa_s(x)^{m_{i,s}} \cdot
   \exp \left ( \sum_{i \ge 1, j\ge 0} \w b_{(i, 1^j)} \fa_{-i}(1_X)\fa_{-1}(x)^j q^{i+j}
     \right ) \vac, |1\rangle \right \rangle.
\end{eqnarray*}
Put $m_s = \sum_{i=1}^{N} m_{i,s}$ for every $s \ge 1$. Then,  \eqref{Fqxk.6} is equal to
\begin{eqnarray*}   
& &\left \langle \prod_{s \ge 1} \fa_s(x)^{m_s} \cdot
   \exp \left ( \sum_{i \ge 1, j\ge 0} \w b_{(i, 1^j)} \fa_{-i}(1_X)\fa_{-1}(x)^j q^{i+j}
     \right ) \vac, |1\rangle \right \rangle          \\
&=&\left \langle \prod_{s \ge 1} \fa_s(x)^{m_s} \cdot \prod_{i \ge 1, j\ge 0} \sum_t {1 \over t!} 
   \left (  \w b_{(i, 1^j)} \fa_{-i}(1_X)\fa_{-1}(x)^j q^{i+j}
     \right )^{t} \cdot \vac, |1\rangle \right \rangle             \\
&=&\prod_{s \ge 1} \left ( (-s)^{m_s} m_s! \sum_{t_0 + t_1 + \cdots + t_j + \cdots = m_s} 
     \prod_{j=0}^{+\infty} {(\w b_{(s, 1^j)} q^{s+j})^{t_j} \over  t_j!} \right ).
\end{eqnarray*}
Combining this with \eqref{Fqxk.5}, \eqref{Fqxk.2}, \eqref{Fqxk.6} and \eqref{Fqxk.7},
$F^{x,\ldots,x}_{k_1, \ldots, k_N}(q)$ is equal to
$$   
(q; q)_\infty^{-\chi(X)} \cdot 
(-1)^N  \sum_{\sum_{s \ge 1} (s+1) m_{i,s} = k_i + 2 \atop 1 \le i \le N}    
\prod_{i=1}^N {1 \over (\sum_{s \ge 1} s m_{i,s})!} \cdot  
$$
$$
\cdot \prod_{s \ge 1} \left ({(-s)^{m_s} m_s! \over \prod_i m_{i,s}!} 
      \sum_{t_0 + t_1 + \cdots + t_j + \cdots = m_s} 
     \prod_{j=0}^{+\infty} {(\w b_{(s, 1^j)} q^{s+j})^{t_j} \over  t_j!} \right )
$$
where $m_{i,s} \ge 0$ for every $i$ and $s$, and $m_s = \sum_{i=1}^{N} m_{i,s}$ 
for every $s \ge 1$.
\end{proof}

Our next result determines the universal constants $b_{(i, 1^j)}$ with $i \ge 2$ and $j \ge 0$.

\begin{theorem}   \label{bi1j}
Let the numbers $b_{(1^{j})}$ and $b_{(i, 1^j)}$
be from Lemma~\ref{Boi}. Let $\w b_{(i, 1^j)} = (j+1) b_{(1^{j+1})} = \sigma_1(j+1)$ 
if $i = 1$, and $\w b_{(i, 1^j)} = b_{(i, 1^j)}$ if $i > 1$.
\begin{enumerate}
\item[{\rm (i)}] If $i$ is an even positive integer, then $b_{(i, 1^j)} = 0$ for all $j \ge 0$. 

\item[{\rm (ii)}] Let $i > 1$ be odd. Then, 
$\displaystyle{{1 \over (i-1)!} \sum_{j \ge 0} b_{(i, 1^j)} q^{i+j}}$ is equal to
$$
\sum_{\sum_{1 \le s <i} (s+1) m_{s} = i+1 \atop 2 \nmid s} {1 \over (\sum_{2 \nmid s} s m_{s})!}   
   \prod_{2 \nmid s} \left (\sum_{\sum_{j \ge 0} t_j = m_s} 
   \prod_{j=0}^{+\infty} {\big ((-s) \w b_{(s, 1^j)} q^{s+j} \big )^{t_j} \over  t_j!} \right )  
$$
$$
- \sum_{a, s_1, \ldots, s_a, b, t_1, \ldots, t_b \ge 1 
   \atop {\sum_{u=1}^a s_u + \sum_{v = 1}^b t_v = i+1
       \atop {n_1 > \cdots > n_a, m_1 > \cdots > m_b
           \atop {\sum_{u=1}^a n_u s_u = \sum_{v=1}^b m_v t_v}}}} 
   \prod_{u= 1}^a {(-1)^{s_u} q^{n_u s_u} \over s_u! \cdot (1-q^{n_u})^{s_u}}
   \cdot \prod_{v= 1}^b {1 \over t_v! \cdot (1-q^{m_v})^{t_v}}.
$$
\end{enumerate}
\end{theorem}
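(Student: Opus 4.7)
The plan is to compare two independent expressions for $F^x_{k}(q)$, where $x \in H^4(X)$ is the Poincar\'e dual of a point. On one hand, Lemma~\ref{Fqxk} with $N = 1$, $k_1 = k$, and $\alpha_1 = x$ expands $F^x_{k}(q)$ as an explicit polynomial in the universal constants $\widetilde b_{(s, 1^j)}$, indexed by tuples $(m_s)_{s \ge 1}$ with $\sum_{s \ge 1}(s+1)m_s = k+2$. On the other hand, since $K_X \cdot x = e_X \cdot x = 0$ and $|x| = 4$, Proposition~\ref{ThmFkAlpha}~(ii) applies and gives $F^x_k(q)$ as the coefficient of $z^0$ in an explicit multiple $q$-zeta-like expression, which moreover vanishes identically when $k$ is odd. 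Taking $k = i - 1$ and equating the two formulas will yield an identity determining $b_{(i, 1^j)}$ in terms of constants indexed by smaller $s$.

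For part~(i), I proceed by induction on even $i \ge 2$. With $k = i-1$, the constraint $k+2 = i+1$ forces $m_s = 0$ for $s > i$, and the tuple with $m_i = 1$ (all other $m_s = 0$) contributes exactly $-\frac{1}{(i-1)!} \sum_{j \ge 0} b_{(i, 1^j)} q^{i+j}$ to the sum in Lemma~\ref{Fqxk}. Every other tuple satisfies $\sum_{s < i}(s+1)m_s = i+1$; since $i+1$ is odd while $(s+1)m_s$ is even for odd $s$, parity forces some even $s_0 < i$ with $m_{s_0} \ge 1$, and the inductive hypothesis $b_{(s_0, 1^j)} = 0$ for all $j \ge 0$ kills the contribution (every term of the inner sum over $t_0 + t_1 + \cdots = m_{s_0}$ contains a factor $\widetilde b_{(s_0, 1^j)} = 0$). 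Equating what remains with $F^x_{i-1}(q) = 0$ (which holds by Proposition~\ref{ThmFkAlpha}~(ii) since $k = i-1$ is odd) and clearing the nonvanishing factor $(q;q)_\infty^{-\chi(X)}$ forces $b_{(i, 1^j)} = 0$ for all $j \ge 0$, completing the induction.

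For part~(ii), with $i > 1$ odd, part~(i) kills every tuple in Lemma~\ref{Fqxk} for which $m_{s_0} \ge 1$ at some even $s_0 < i$; the surviving contributions come from the $m_i = 1$ term together with tuples supported on odd $s < i$ satisfying $\sum_{2 \nmid s,\, s < i}(s+1)m_s = i+1$, reproducing the first sum in the theorem after moving the factor $(-s)^{m_s}$ inside. Simultaneously, Proposition~\ref{ThmFkAlpha}~(ii) applied with $\alpha = x$ and $k = i - 1$ even gives $F^x_{i-1}(q)$ as the coefficient of $z^0$ of the stated multiple $q$-zeta expression; extracting that coefficient pins down the constraint $\sum_u n_u s_u = \sum_v m_v t_v$ and reproduces, up to the sign in the theorem, the second sum. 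Equating the two expressions for $(q;q)_\infty^{\chi(X)} F^x_{i-1}(q)$ and solving for $\frac{1}{(i-1)!} \sum_{j \ge 0} b_{(i, 1^j)} q^{i+j}$ yields the claimed identity. The main obstacle is the combinatorial bookkeeping in Lemma~\ref{Fqxk}: singling out the $m_i = 1$ tuple as the unique source of linear dependence on $\{b_{(i, 1^j)}\}_j$ and verifying that the parity-and-induction argument collapses all remaining tuples involving an even $s$; once these observations are secured, the proof reduces to algebraic rearrangement.
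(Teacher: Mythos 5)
Your proposal is correct and follows essentially the same route as the paper: both compare Lemma~\ref{Fqxk} with $N=1$ against Proposition~\ref{ThmFkAlpha}~(ii) for $\alpha = x$, isolate the unique tuple $m_i=1$ carrying the constants $b_{(i,1^j)}$, prove (i) by the parity argument on $\sum_{s<i}(s+1)m_s = i+1$ combined with induction on even $i$ (using the vanishing of $F^x_{i-1}(q)$ for $i-1$ odd), and then read off (ii) from the resulting identity.
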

\begin{proof}
(i) Setting $N = 1$ in Lemma~\ref{Fqxk}, we see that $F^{x}_{k}(q)$ is equal to
$$   
-(q; q)_\infty^{-\chi(X)} \cdot \sum_{\sum_{s \ge 1} (s+1) m_{s} = k + 2}   
{1 \over (\sum_{s \ge 1} s m_{s})!}   
\prod_{s \ge 1} \left (\sum_{\sum_{j \ge 0} t_j = m_s} 
\prod_{j=0}^{+\infty} {\big ((-s) \w b_{(s, 1^j)} q^{s+j} \big )^{t_j} \over  t_j!} \right ).  
$$
Comparing this with \eqref{ThmFkAlpha.0} which holds for all $k \ge 0$, we obtain
\begin{eqnarray*}
& &\sum_{\sum_{s \ge 1} (s+1) m_{s} = k + 2} {1 \over (\sum_s s m_{s})!}   
   \prod_{s \ge 1} \left (\sum_{\sum_{j \ge 0} t_j = m_s} 
   \prod_{j=0}^{+\infty} {\big ((-s) \w b_{(s, 1^j)} q^{s+j} \big )^{t_j} \over  t_j!} \right )  \\
&=&\sum_{a, s_1, \ldots, s_a, b, t_1, \ldots, t_b \ge 1 
   \atop {\sum_{u=1}^a s_u + \sum_{v = 1}^b t_v = k+2
       \atop {n_1 > \cdots > n_a, m_1 > \cdots > m_b
           \atop {\sum_{u=1}^a n_u s_u = \sum_{v=1}^b m_v t_v}}}}
   \prod_{u= 1}^a {(-1)^{s_u} q^{n_u s_u} \over s_u! \cdot (1-q^{n_u})^{s_u}}
   \cdot \prod_{v= 1}^b {1 \over t_v! \cdot (1-q^{m_v})^{t_v}}.
\end{eqnarray*}
The largest value of $s$ satisfying $\sum_{s \ge 1} (s+1) m_{s} = k + 2$ 
is given by $s = k+1$ together with $m_{k+1} = 1$. 
So the above identity can be rewritten as
\begin{eqnarray*}
& &{1 \over k!} \sum_{j \ge 0} 
    \w b_{(k+1, 1^j)} q^{(k+1)+j}    \\
&=&\sum_{\sum_{1 \le s < k+1} (s+1) m_{s} = k + 2} {1 \over (\sum_s s m_{s})!}   
   \prod_{s \ge 1} \left (\sum_{\sum_{j \ge 0} t_j = m_s} 
   \prod_{j=0}^{+\infty} {\big ((-s) \w b_{(s, 1^j)} q^{s+j} \big )^{t_j} \over  t_j!} \right )  \\
& &- \sum_{a, s_1, \ldots, s_a, b, t_1, \ldots, t_b \ge 1 
   \atop {\sum_{u=1}^a s_u + \sum_{v = 1}^b t_v = k+2
       \atop {n_1 > \cdots > n_a, m_1 > \cdots > m_b
           \atop {\sum_{u=1}^a n_u s_u = \sum_{v=1}^b m_v t_v}}}} 
   \prod_{u= 1}^a {(-1)^{s_u} q^{n_u s_u} \over s_u! \cdot (1-q^{n_u})^{s_u}}
   \cdot \prod_{v= 1}^b {1 \over t_v! \cdot (1-q^{m_v})^{t_v}}.
\end{eqnarray*}
Replacing $k+1$ by $i$, we conclude that $\displaystyle{{1 \over (i-1)!} \sum_{j \ge 0} 
\w b_{(i, 1^j)} q^{i+j}}$ is equal to
\begin{eqnarray}     \label{bi1j.1}
\sum_{\sum_{1 \le s <i} (s+1) m_{s} = i+1} {1 \over (\sum_s s m_{s})!}   
   \prod_{s \ge 1} \left (\sum_{\sum_{j \ge 0} t_j = m_s} 
   \prod_{j=0}^{+\infty} {\big ((-s) \w b_{(s, 1^j)} q^{s+j} \big )^{t_j} \over  t_j!} \right ) 
\end{eqnarray}
\begin{eqnarray}     \label{bi1j.2}
- \sum_{a, s_1, \ldots, s_a, b, t_1, \ldots, t_b \ge 1 
   \atop {\sum_{u=1}^a s_u + \sum_{v = 1}^b t_v = i+1
       \atop {n_1 > \cdots > n_a, m_1 > \cdots > m_b
           \atop {\sum_{u=1}^a n_u s_u = \sum_{v=1}^b m_v t_v}}}} 
   \prod_{u= 1}^a {(-1)^{s_u} q^{n_u s_u} \over s_u! \cdot (1-q^{n_u})^{s_u}}
   \cdot \prod_{v= 1}^b {1 \over t_v! \cdot (1-q^{m_v})^{t_v}}. 
\end{eqnarray}
Note that \eqref{bi1j.2} is equal to $0$ if $2|i$. 
Letting $i = 2$, we get $\displaystyle{\sum_{j \ge 0} \w b_{(2, 1^j)} q^{2+j} = 0}$.
Therefore, $\w b_{(2, 1^j)} = 0$ for every $j \ge 0$. 
Hence we have $b_{(2, 1^j)} = 0$ for every $j \ge 0$. 

Next, let $i > 2$ and $2|i$. Assume inductively that $b_{(s, 1^j)} = 0$ 
for every $j \ge 0$ whenever $2 \le s < i$ and $2|s$. 
Since \eqref{bi1j.2} is $0$,
$\displaystyle{{1 \over (i-1)!} \sum_{j \ge 0} 
b_{(i, 1^j)} q^{i+j}}$ is equal to 
$$
\sum_{\sum_{1 \le s <i} (s+1) m_{s} = i+1} {1 \over (\sum_s s m_{s})!}   
   \prod_{s \ge 1} \left (\sum_{\sum_{j \ge 0} t_j = m_s} 
   \prod_{j=0}^{+\infty} {\big ((-s) \w b_{(s, 1^j)} q^{s+j} \big )^{t_j} \over  t_j!} \right )
$$
The condition $\sum_{1 \le s <i} (s+1) m_{s} = i+1$ implies that $m_s > 0$ for 
some even integer $s < i$.
Hence $\displaystyle{{1 \over (i-1)!} \sum_{j \ge 0} b_{(i, 1^j)} q^{i+j}} = 0$ by induction.
So $b_{(i, 1^j)} = 0$ for all $j \ge 0$.

(ii) Follows immediately from (i), \eqref{bi1j.1} and \eqref{bi1j.2}.
\end{proof}

Note that $b_{(2i)} = 0$, $i \ge 1$ has been proved in \cite{Boi, BN} (see Lemma~\ref{Boi}). 
Next, using the universal constants $f_{(2,1^j)}, g_{(2,1^j)}$ and $h_{(2,1^j)}$,
we compute the generating series $F^{\alpha}_{1}(q)$ for a cohomology class $\alpha$
with $|\alpha| < 4$.

\begin{lemma}  \label{Fq1Alpha}
Let $f_{(2,1^j)}, g_{(2,1^j)}$ and $h_{(2,1^j)}$ be from Lemma~\ref{Boi},
and let $\alpha \in H^*(X)$ be a homogeneous class with $0 < |\alpha| < 4$. Then,
\begin{enumerate}
\item[{\rm (i)}] $\displaystyle{F^{1_X}_{1}(q) = \sum_{j \ge 0} \w f_{(2,1^j)} q^{2+j}}$
where $\w f_{(2,1^j)} = \chi(X) \cdot f_{(2,1^j)} + \langle K_X, K_X \rangle \cdot h_{(2,1^j)}$;

\item[{\rm (ii)}] $\displaystyle{F^{\alpha}_{1}(q) = (q; q)_\infty^{-\chi(X)} \cdot 
\sum_{j \ge 0} g_{(2,1^j)} q^{2+j} \cdot \langle \alpha, K_X \rangle}$.
\end{enumerate}
\end{lemma}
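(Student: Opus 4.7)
The plan is to apply equation \eqref{F-generating.1} and rewrite
\[
F^\alpha_1(q) \;=\; \bigl\langle \fG_1(\alpha)\,\exp(E)\vac,\,|1\rangle\bigr\rangle,
\]
where, by Lemma~\ref{Boi} together with $e_X=\chi(X)x$ and $K_X^2=\langle K_X,K_X\rangle\,x$,
\[
E \;=\; \sum_{\mu\in\cp} q^{|\mu|}\bigl(b_\mu\fa_{-\mu}(1_X) + \w f_\mu\fa_{-\mu}(x) + g_\mu\fa_{-\mu}(K_X)\bigr),\quad \w f_\mu := \chi(X)f_\mu+\langle K_X,K_X\rangle h_\mu
\]
(extending the notation from the proof of Lemma~\ref{Fqxk}, since both the $f_\mu$- and $h_\mu$-contributions of Lemma~\ref{Boi} land in the $\fa_{-\mu}(x)$-sector). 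Then insert the explicit formula \eqref{char_thK=1.0} for $\fG_1(\alpha)$ as the sum of its three-operator piece $-\sum_{\ell(\la)=3,|\la|=0}\fa_\la(\alpha)/\la!$ and its two-operator piece $-\sum_{n>0}\tfrac{n-1}{2}(\fa_{-n}\fa_n)(K_X\alpha)$.

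I would then push every annihilation inside $\fG_1(\alpha)$ to the right through $\exp(E)$ using $\fa_n(\gamma)\exp(E)\vac = \exp(E)[\fa_n(\gamma),E]\vac$, with the commutator expanded by Theorem~\ref{commutator} and Lemma~\ref{tau_k_tau_{k-1}} into reduced sums of products of creation operators. This converts $\fG_1(\alpha)\exp(E)\vac$ into a $q$-weighted sum of creation monomials multiplying $\exp(E)\vac$. The key filtering principle, already used in \eqref{taukx0} in the proof of Lemma~\ref{Fqxk}, is that the pairing with $|1\rangle$ annihilates every surviving creation operator that is not a scalar multiple of $\fa_{-1}(x)$; this drastically constrains which Künneth components of $\tau_{3*}\alpha$, $\tau_{2*}(K_X\alpha)$ and $\tau_{\ell*}(1_X),\tau_{\ell*}(x),\tau_{\ell*}(K_X)$ contribute, and which partitions $\mu$ of $E$ can be contracted against.

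For Part~(ii) with $0<|\alpha|<4$: the $\fa_\la(\alpha)/\la!$-summand of $\fG_1(\alpha)$ cannot produce an all-$\fa_{-1}(x)$ residual under any Künneth decomposition of $\tau_{3*}\alpha$, so it contributes $0$. The $(\fa_{-n}\fa_n)(K_X\alpha)$-summand is nontrivial only when $|\alpha|=2$, in which case $K_X\alpha=\langle K_X,\alpha\rangle x\in H^4(X)$ and so $(\fa_{-n}\fa_n)(K_X\alpha)=\langle K_X,\alpha\rangle\fa_{-n}(x)\fa_n(x)$; the $\fa_n(x)$-annihilation contracts only with $\fa_{-n}(1_X)$-slots inside $\fa_{-\mu}(\,\cdot\,)$-factors of $E$ (exactly the mechanism from the proof of Lemma~\ref{Fqxk}), and a careful tracking singles out $\mu=(2,1^j)$ inside the $g_\mu\fa_{-\mu}(K_X)$-sector, with the untouched $\fa_{-(1^i)}(x)$-pieces of $E$ reassembling into $(q;q)_\infty^{-\chi(X)}$ by \eqref{Fqxk.4}. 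For $|\alpha|\in\{1,3\}$, $\langle\alpha,K_X\rangle=0$ by degree, and no compatible contraction exists, so $F^\alpha_1(q)=0$. For Part~(i), $\alpha=1_X$: the same contraction analysis applies, but now $K_X\cdot 1_X=K_X$ sits in $H^2(X)$; both summands of $\fG_1(1_X)$ route through the $\w f_\mu\fa_{-\mu}(x)$-sector of $E$ (via the $1_X$-slot of $\tau_{3*}(1_X)$ for the $\fa_\la$-summand and the $K_X$-slot of $\tau_{2*}(K_X)$ for the $(\fa_{-n}\fa_n)$-summand), forcing $\mu=(2,1^j)$ and yielding $\sum_j\w f_{(2,1^j)}q^{2+j}$. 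The main obstacle will be the meticulous combinatorial bookkeeping---tracking Künneth coefficients (including the secondary $H^2(X)$-components of $\tau_{\ell*}(1_X)$ and $\tau_{\ell*}(K_X)$), signs from odd-degree cohomology classes, multinomial factors $\la!,\mu!$, and contraction multiplicities---and verifying the specific numerical collapse that produces the stated clean answers, including the cancellation of the would-be $(q;q)_\infty^{-\chi(X)}$-prefactor in Part~(i) coming from the fact that the $\w f_\mu\fa_{-\mu}(x)$-contraction consumes exactly the sector of $E$ that would otherwise generate $(q;q)_\infty^{-\chi(X)}$ via \eqref{Fqxk.4}.
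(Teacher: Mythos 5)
Your overall frame (start from \eqref{F-generating.1} and Lemma~\ref{Boi}, rewrite the $f_\mu$- and $h_\mu$-sectors as $\w f_\mu\fa_{-\mu}(x)$, contract annihilation operators against the exponential, and filter with the principle that only creation operators $\fa_{-1}$ of degree-$4$ classes survive the pairing with $|1\rangle$) is the same as the paper's. But you have attributed the answer to the wrong summand of $\fG_1(\alpha)$ in \eqref{char_thK=1.0}, and the route as described would fail. The paper's first and decisive observation is that the two-operator piece contributes \emph{nothing}: $\left\langle (n-1)(\fa_{-n}\fa_n)(K_X\alpha)A,|1\rangle\right\rangle=0$ for every $n$ and every $A\in\fock$, because the $n=1$ term carries the factor $n-1=0$, while for $n\ge 2$ the \emph{outer creation operator} $\fa_{-n}$ already pairs to zero against $|1\rangle$ (its adjoint $(-1)^n\fa_n$ commutes with $\fa_{-1}(1_X)$ and kills $\vac$). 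So the $(\fa_{-n}\fa_n)(K_X\alpha)$-summand, on which you base all of part~(ii) and half of part~(i), is dead on arrival. Even setting that aside, your own filtering principle shows that $\fa_n(x)$ can only contract against a K\"unneth slot carrying an $H^0$-class, hence only against the $b_\mu\fa_{-\mu}(1_X)$-sector; it cannot reach the $g_\mu\fa_{-\mu}(K_X)$-sector, so this mechanism could never produce the constants $g_{(2,1^j)}$.

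Conversely, the three-operator piece does not vanish for $0<|\alpha|<4$. Only $\la=\big((-1)^2 2\big)$ survives the pairing (any surviving creation part must equal $-1$), the two $\fa_{-1}$-slots integrate out by the projection formula, and one is left with $F^\alpha_1(q)=-\tfrac12\big\langle \fa_2(\alpha)\sum_n c(T_\Xn)q^n,|1\rangle\big\rangle$. For $|\alpha|=2$ the operator $\fa_2(\alpha)$ contracts precisely with the $\fa_{-2}(K_X)\fa_{-1}(x)^j$-component of $g_{(2,1^j)}\fa_{-(2,1^j)}(K_X)$, producing the factor $-2\langle\alpha,K_X\rangle$; this is the actual source of part~(ii). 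For $\alpha=1_X$ it contracts with the $\w f_{(2,1^j)}\fa_{-2}(x)\fa_{-1}(x)^j$-terms, giving part~(i). Finally, your closing claim that the contraction ``consumes'' the sector of $E$ generating $(q;q)_\infty^{-\chi(X)}$ is not a real argument: contracting one $(2,1^j)$-factor out of the exponential leaves the $(1^i)$-sector intact, and by Lemma~\ref{splitting} and \eqref{Fqxk.4} that sector reappears as an overall factor $(q;q)_\infty^{-\chi(X)}$, exactly as it does in part~(ii); in the only place the lemma is applied (Proposition~\ref{CorPropch1Alpha}, where $\chi(X)=0$) this factor equals $1$, so no cancellation mechanism is needed or available.
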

\begin{proof}
(i) Let $\alpha \in H^*(X)$ be an arbitrary cohomology class. 
Note that for all $n \ge 1$ and $A \in \fock$, we have 
$\displaystyle{\left \langle (n-1) (\mathfrak a_{-n} 
\mathfrak a_n)(K_X \alpha)A, |1\rangle \right \rangle = 0}$.
By \eqref{F-generating.1} and \eqref{char_thK=1.0}, 
\begin{eqnarray}     \label{Fq1Alpha.1}
   F^{\alpha}_{1}(q)
&=&\left \langle \fG_{1}(\alpha) \sum_n c\big (T_\Xn \big )q^n, |1\rangle \right \rangle 
                \nonumber    \\
&=&-\sum_{\ell(\lambda) = 3, |\lambda|=0} {1 \over \lambda!} 
   \left \langle \mathfrak a_{\lambda}(\alpha) \sum_n c\big (T_\Xn \big )q^n, 
   |1\rangle \right \rangle          \nonumber    \\
&=&-{1 \over 2} \left \langle (\mathfrak a_{-1}\mathfrak a_{-1}\mathfrak a_{2})(\alpha) 
   \sum_n c\big (T_\Xn \big )q^n, |1\rangle \right \rangle   \nonumber    \\
&=&-{1 \over 2} \left \langle \mathfrak a_{2}(\alpha) 
   \sum_n c\big (T_\Xn \big )q^n, |1\rangle \right \rangle.
\end{eqnarray}

Set $\alpha = 1_X$. Put $\w f_\mu = \chi(X) \cdot f_\mu + \langle K_X, K_X \rangle \cdot h_\mu$.
By Lemma~\ref{Boi}, $F^{1_X}_{1}(q)$ equals
\begin{eqnarray*}        
& &-{1 \over 2} \left \langle \mathfrak a_{2}(1_X) 
   \exp \left ( \sum_{\mu \in \cp} q^{|\mu|} \w f_\mu \fa_{-\mu}(x) \right ) \vac, 
   |1\rangle \right \rangle  \\
&=&-{1 \over 2} \left \langle \mathfrak a_{2}(1_X) 
   \exp \left ( \sum_{j \ge 0} q^{2+j} \w f_{(2,1^j)} \fa_{-2}(x)\fa_{-1}(x)^j \right ) \vac, 
   |1\rangle \right \rangle   \\
&=&-{1 \over 2} \left \langle \mathfrak a_{2}(1_X) 
   \left ( \sum_{j \ge 0} q^{2+j} \w f_{(2,1^j)} \fa_{-2}(x)\fa_{-1}(x)^j \right ) \vac, 
   |1\rangle \right \rangle   \\
&=&\sum_{j \ge 0} \w f_{(2,1^j)} q^{2+j}.
\end{eqnarray*}

(ii) Let $0 < |\alpha| < 4$. Again by \eqref{Fq1Alpha.1} and Lemma~\ref{Boi}, 
$F^{\alpha}_{1}(q)$ is equal to
\begin{eqnarray*}        
& &-{1 \over 2} (q; q)_\infty^{-\chi(X)} \left \langle \mathfrak a_{2}(\alpha) 
   \exp \left ( \sum_{\mu \in \cp} q^{|\mu|}  
   g_\mu \fa_{-\mu}(K_X)   \right ) \vac, |1\rangle \right \rangle  \\
&=&-{1 \over 2} (q; q)_\infty^{-\chi(X)} \left \langle \mathfrak a_{2}(\alpha) 
   \exp \left ( \sum_{j \ge 0} q^{2+j}  g_{(2,1^j)} (\fa_{-2}\fa_{-1}^j)(K_X)   
   \right ) \vac, |1\rangle \right \rangle   \\
&=&-{1 \over 2} (q; q)_\infty^{-\chi(X)} \left \langle \mathfrak a_{2}(\alpha) 
   \left ( \sum_{j \ge 0} q^{2+j}  g_{(2,1^j)} \fa_{-2}(K_X)\fa_{-1}(x)^j  
   \right ) \vac, |1\rangle \right \rangle.
\end{eqnarray*}
Therefore, $\displaystyle{F^{\alpha}_{1}(q) = (q; q)_\infty^{-\chi(X)} \cdot 
\sum_{j \ge 0} g_{(2,1^j)} q^{2+j} \cdot \langle \alpha, K_X \rangle}$
when $0 < |\alpha| < 4$.
\end{proof}

\begin{proposition}   \label{CorPropch1Alpha}
Let the numbers $g_{(2,1^j)}$ and $h_{(2,1^j)}$ be from Lemma~\ref{Boi}. Then,
$g_{(2,1^j)} = -h_{(2,1^j)}$. Moreover, $\sum_{j \ge 0} g_{(2,1^j)} q^{2+j}$ is 
the coefficient of $z^0$ in
$$
{1 \over 2} \left ( \sum_{n} {(n-1)q^n \over (1 - q^n)^2} 
+ \sum_n {(qz)^{n} \over 1-q^n} 
\cdot \left (\sum_m {z^{-2m} \over (1-q^m)^2} + 2 \sum_{m_1 > m_2} 
{z^{-m_1} \over 1-q^{m_1}} {z^{-m_2} \over 1-q^{m_2}} \right ) \right ).
$$
\end{proposition}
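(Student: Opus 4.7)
The plan is to match two different formulas for the generating series $F_1^\alpha(q)$: the Heisenberg-algebra computation of Lemma~\ref{Fq1Alpha}, which expresses $F_1^\alpha(q)$ linearly in the universal constants $g_{(2,1^j)}$ and $h_{(2,1^j)}$, against the closed-form $q$-series of Proposition~\ref{Propch1Alpha}, which is available whenever $e_X\alpha=0$. By choosing $\alpha$ in two different ways I can isolate each generating series $\sum_j g_{(2,1^j)}q^{2+j}$ and $\sum_j h_{(2,1^j)}q^{2+j}$ separately, and a direct comparison will simultaneously yield the closed form claimed for $\sum_j g_{(2,1^j)}q^{2+j}$ and the identity $g_{(2,1^j)}=-h_{(2,1^j)}$.

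For the first extraction I would take $\alpha\in H^2(X)$ on any surface $X$. Then $e_X\alpha\in H^6(X)=0$ automatically and $\langle K_X^2,\alpha\rangle=0$ by degree, so $\langle K_X-K_X^2,\alpha\rangle=\langle K_X,\alpha\rangle$, and Proposition~\ref{Propch1Alpha} collapses to
\[
F_1^\alpha(q)=(q;q)_\infty^{-\chi(X)}\cdot\tfrac{\langle K_X,\alpha\rangle}{2}\cdot \Coe_{z^0}R(q,z),
\]
where $R(q,z)$ denotes the parenthesised expression in Proposition~\ref{Propch1Alpha}. Comparing with Lemma~\ref{Fq1Alpha}(ii), and specialising to any surface equipped with a divisor class $\alpha$ satisfying $\langle K_X,\alpha\rangle\ne 0$, one cancels $\langle K_X,\alpha\rangle$ and reads off $\sum_j g_{(2,1^j)}q^{2+j}=\tfrac{1}{2}\Coe_{z^0}R(q,z)$; since $g_\mu$ is a universal constant independent of $X$, this identity then holds unconditionally and is precisely the formula claimed in the proposition.

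For the second extraction I would set $\alpha=1_X$ and work on a surface with $e_X=0$ but $\langle K_X,K_X\rangle\ne 0$, for instance $X=C\times E$ with $C$ a curve of genus $\ge 2$ and $E$ an elliptic curve. Since $e_X\cdot 1_X=0$, Proposition~\ref{Propch1Alpha} applies; because $\langle K_X,1_X\rangle=\int_X K_X=0$ by degree, $\langle K_X-K_X^2,1_X\rangle=-\langle K_X,K_X\rangle$, and $\chi(X)=0$ makes $(q;q)_\infty^{-\chi(X)}=1$. The upshot is
\[
F_1^{1_X}(q)=-\tfrac{\langle K_X,K_X\rangle}{2}\Coe_{z^0}R(q,z).
\]
On the other side, Lemma~\ref{Fq1Alpha}(i) specialised to $\chi(X)=0$ gives $F_1^{1_X}(q)=\langle K_X,K_X\rangle\sum_j h_{(2,1^j)}q^{2+j}$ (the $\chi(X)f_{(2,1^j)}$ contribution drops out). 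Dividing by $\langle K_X,K_X\rangle\ne 0$ produces $\sum_j h_{(2,1^j)}q^{2+j}=-\tfrac{1}{2}\Coe_{z^0}R(q,z)$, and combining with the preceding paragraph delivers $g_{(2,1^j)}=-h_{(2,1^j)}$ for every $j\ge 0$.

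The argument is essentially a two-way bookkeeping match, so no serious obstacle is anticipated. The one point requiring care is ensuring the two test configurations exist, namely a surface carrying a divisor not orthogonal to $K_X$, and a surface with $e_X=0$ yet $K_X^2\ne 0$; both are standard, and the universality of $g_\mu,h_\mu$ removes any dependence on the specific examples chosen.
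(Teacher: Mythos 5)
Your strategy is exactly the paper's: play Lemma~\ref{Fq1Alpha} off against Proposition~\ref{Propch1Alpha} for two choices of $\alpha$, one isolating $\sum_j g_{(2,1^j)}q^{2+j}$ and one isolating $\sum_j h_{(2,1^j)}q^{2+j}$. Your first extraction (a degree-two class $\alpha$ with $\langle K_X,\alpha\rangle\ne 0$ on an arbitrary surface, using $e_X\alpha\in H^6(X)=0$ and $\langle K_X^2,\alpha\rangle=0$ for degree reasons) is correct, and in fact slightly more flexible than the paper, which runs both comparisons on a single surface with $\chi(X)=0$ and $\langle K_X,K_X\rangle\ne 0$, taking $\alpha=K_X$ for the $g$-series.

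The second extraction, however, has a concrete error: the proposed test surface $X=C\times E$ does not satisfy $\langle K_X,K_X\rangle\ne 0$. Since $K_E$ is trivial, $K_{C\times E}=p_1^*K_C$, and the square of any class pulled back from the curve $C$ lies in $p_1^*H^4(C)=0$; equivalently, Noether's formula $K_X^2=12\,\chi(\mathcal O_X)-e(X)$ together with $\chi(\mathcal O_{C\times E})=\chi(\mathcal O_C)\cdot\chi(\mathcal O_E)=0$ forces $K_X^2=0$ as soon as $e(X)=0$. So your final step divides by $\langle K_X,K_X\rangle=0$. What is actually needed is a surface with $e(X)=0$ and $\chi(\mathcal O_X)\ne 0$; these exist but are not products of curves. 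For instance, blow up $\mathbb P^1\times C$ with $g(C)=g\ge 2$ (which has $e=4-4g<0$ and $\chi(\mathcal O)=1-g$) at $4g-4$ points: the result has $e(X)=0$ and $K_X^2=12(1-g)\ne 0$. With such an $X$ substituted for $C\times E$, the rest of your argument goes through and reproduces the paper's proof.
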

\begin{proof}
For simplicity, denote the previous line by $A(z)$.
Let $X$ be a smooth projective surface with $\chi(X) = 0$ and 
$\langle K_X, K_X \rangle \ne 0$. 
On one hand, applying Lemma~\ref{Fq1Alpha}~(i) and 
Proposition~\ref{Propch1Alpha} to $F^{1_X}_{1}(q)$, we conclude that 
$\sum_{j \ge 0} h_{(2,1^j)} q^{2+j}$ is the coefficient of $z^0$ in $-A(z)$.
On the other hand, applying Lemma~\ref{Fq1Alpha}~(ii) and 
Proposition~\ref{Propch1Alpha} to $F^{K_X}_{1}(q)$, we see that 
$\sum_{j \ge 0} g_{(2,1^j)} q^{2+j}$ is the coefficient of $z^0$ in $A(z)$.
It follows that $g_{(2,1^j)} = -h_{(2,1^j)}$ for every $j \ge 0$.
\end{proof}

\begin{remark}   \label{RmkZero}
Let $N \ge 1$.
Let $\alpha_1, \ldots, \alpha_N \in H^*(X)$ be homogeneous classes such that 
$K_X \alpha_i = e_X \alpha_i = 0$ for all $1 \le i \le N$,
and let $k_1, \ldots, k_N \ge 0$.
\begin{enumerate}
\item[{\rm (i)}]  
As in the proof of Lemma~\ref{Fqxk}, we have
\begin{eqnarray*} 
F^{\alpha_1, \ldots, \alpha_N}_{k_1, \ldots, k_N}(q)       
= (q; q)_\infty^{-\chi(X)} \left \langle 
   \left ( \prod_{i=1}^N \fG_{k_i}(\alpha_i) \right )
   \exp \left ( \sum_{\mu \in \cp}  b_\mu \fa_{-\mu}(1_X) q^{|\mu|} \right ) \vac, 
   |1\rangle \right \rangle. 
\end{eqnarray*}
In principle, together with Theorem~\ref{ThmFk1NAlpha1N},
this allows us to determine many of the universal constants $b_\mu$ in Lemma~\ref{Boi}.

\item[{\rm (ii)}] 
In particular, $F^{\alpha_1}_{k_1}(q) = 0$ if $|\alpha_1| < 4$.
This matches with Proposition~\ref{ThmFkAlpha}~(i).
\end{enumerate}
\end{remark}


\begin{thebibliography}{ABCD}

\bibitem[Boi]{Boi} S. Boissi\' ere, 
{\em Chern classes of the tangent bundle on the Hilbert scheme of
points on the affine plane}. J. Alg. Geom. {\bf 14} (2005), 761-787.

\bibitem[BN]{BN} S. Boissi\' ere, M.A. Nieper-Wisskirchen,
{\em Generating series in the cohomology of Hilbert schemes of
points on surfaces}. LMS J. Comput. Math. {\bf 10} (2007), 254-270 (electronic).

\bibitem[Bra1]{Bra1} D. Bradley,
{\em Multiple $q$-zeta values}. J. Algebra {\bf 283} (2005), 752-798.

\bibitem[Bra2]{Bra2} D. Bradley,
{\em On the sum formula for multiple $q$-zeta values}. 
Rocky Mountain J. Math. {\bf 37} (2007), 1427-1434.

\bibitem[Car]{Car} E. Carlsson, 
{\em Vertex operators and moduli spaces of sheaves}. 
Ph.D thesis, Princeton University, 2008.

\bibitem[CO]{CO} E. Carlsson, A. Okounkov,
{\em Exts and Vertex Operators}. Duke Math. J. {\bf 161} (2012), 1797-1815.


\bibitem[Got]{Got} L. G\"ottsche,
{\em The Betti numbers of the Hilbert scheme of points on a smooth
projective surface}, Math. Ann. {\bf 286} (1990) 193--207.

\bibitem[Gro]{Gro} I.~Grojnowski,
{\em Instantons and affine algebras I: the Hilbert scheme and
vertex operators}, Math. Res. Lett. {\bf 3} (1996) 275--291.



\bibitem[LQW1]{LQW1} W.-P. Li, Z. Qin and W. Wang, 
{\em Vertex algebras and the
cohomology ring structure of Hilbert schemes of points on
surfaces}. Math. Ann. {\bf 324} (2002), 105-133.

\bibitem[LQW2]{LQW2} W.-P. Li, Z. Qin and W. Wang, 
{\em Hilbert schemes and $\mathcal W$ algebras}.
Intern. Math. Res. Notices {\bf 27} (2002), 1427-1456.

\bibitem[LQW3]{LQW3} W.-P. Li, Z. Qin, W. Wang, 
{\em Stability of the cohomology rings of Hilbert schemes 
of points on surfaces}. J. reine angew. Math. {\bf 554} (2003), 217-234.

\bibitem[Nak]{Nak} H. Nakajima,
{\em Heisenberg algebra and Hilbert schemes of points on
projective surfaces}, Ann. Math. {\bf 145} (1997) 379--388.

\bibitem[Oko]{Oko} A. Okounkov,
{\em Hilbert schemes and multiple $q$-zeta values}. 
Funct. Anal. Appl. {\bf 48} (2014), 138-144.

\bibitem[OT]{OT} J. Okuda, Y. Takeyama,
{\em On relations for the multple $q$-zeta values}. 
Ramanujan J. {\bf 14} (2007), 379-387.

\bibitem[Zud]{Zud} W. Zudilin,
{\em Algebraic relations for multiple zeta values}. 
Russian Math. Surveys {\bf 58} (2003), 1-29.

\end{thebibliography}
\end{document}